\newtheorem{lemma}{\bf Lemma}[section]
\newtheorem{theorem}{\bf Theorem}[section]
\newtheorem{proposition}{\bf Proposition}[section]
\newtheorem{remark}{\bf Remark}[section]
\numberwithin{equation}{section}
\begin{document}
	
	\title{{\Large Traveling waves to a logarithmic chemotaxis model with fast diffusion and singularities}
		\footnotetext{\small
			*Corresponding author.}
		\footnotetext{\small E-mail addresses: lixw928@nenu.edu.cn (XL), dfli@hust.edu.cn (DL), lijy645@nenu.edu.cn (JL), ming.mei@mcgill.ca (MM).} }
	
	\author{{Xiaowen Li$^{1,4}$, Dongfang Li$^2$, Jingyu Li$^{1,}$$^\ast$ and Ming Mei$^{3,4}$}\\[2mm]
		\small\it $^1$School of Mathematics and Statistics, Northeast Normal University,\\
		\small\it   Changchun, 130024, P.R.China \\
        \small\it $^2$School of Mathematics and Statistics, Huazhong University of Science and Technology, \\
        \small\it Wuhan, 430074,  P.R.China \\
		\small\it $^3$Department of Mathematics, Champlain College Saint-Lambert,\\
		\small\it     Saint-Lambert, Quebec, J4P 3P2, Canada\\
		\small\it $^4$Department of Mathematics and Statistics, McGill University,\\
		\small\it     Montreal, Quebec, H3A 2K6, Canada  }

	\date{}
	
	\maketitle
	
	\begin{quote}
		\small \textbf{Abstract}: This paper is concerned with a chemotaxis model with logarithmic sensitivity and fast diffusion, which possesses strong singularities for the sensitivity at zero-concentration of chemical signal, and for the diffusion at zero-population of cells, respectively. The main purpose is to show the existence of traveling waves connecting the singular zero-end-state, and particularly,  to show the asymptotic stability of these traveling waves. The challenge of the problem is the interaction of two kinds of singularities involved in the model: one is the logarithmic singularity of the sensitivity; and the other is the power-law singularity of the diffusivity. To overcome the singularities for the wave stability, some new techniques of weighted energy method are introduced artfully. Numerical simulations are also carried out, which further confirm our theoretical stability results, in particular, the numerical results indicate that the effect of fast diffusion to the structure of traveling waves is essential, which causes the traveling waves much steeper like shock waves. This new phenomenon is a first observation.
		
\indent \textbf{Keywords}: Chemotaxis; fast diffusion; traveling waves; nonlinear stability; weighted energy estimates.
		
\indent \textbf{AMS (2020) Subject Classification}: 35C07, 35B35, 35Q92, 92C17

		
	\end{quote}

	
\section{Introduction}

{\bf Models and Background}. In this paper, we study the chemotaxis model \cite{typicalK-S,Background,OFSF,VP}
\begin{equation}\label{orginal model}
	\left\{\begin{aligned}
		u_t&=(u^p)_{x x}-\chi \left[u(\ln w)_x\right]_x, \\
		w_t&=-u w,
	\end{aligned}\right.
\end{equation}
where $u(x,t)$ is the population density of cells or bacteria, and $w(x,t)$ is the concentration of chemical signal (e.g. nutrient). The cells diffuse nonlinearly in the form of $(u^p)_{x x}$, where $p>0$ denotes the diffusion exponent, which means the diffusion of cells is fast if $0<p<1$, linear if $p=1$ or slow if $p>1$. This nonlinear diffusion models the movement of cells in a porous medium or the mechanism of prevention of overcrowding for cells. The chemotactic response of the cells to the stimulation of the chemical signal is represented by $-\chi \left[u(\ln w)_x\right]_x$, where $\chi>0$ represents the chemotactic coefficient that measures the strength of chemoattractants. Here the logarithmic sensitivity function $\varphi(w)=\ln w$ comes from the pervasiveness of Weber-Fechner law, and was first introduced by Keller and Segel \cite{typicalK-S} to model the propagation of traveling band of bacteria observed in the celebrated experiment of Adler \cite{Adler2}. In the second equation, we neglect the diffusion of chemical signal because in many circumstances (e.g. Adler's experiment \cite{Adler2}) the diffusion rate of typical signal is quite small compared to that of the cells. This model possesses two singularities: one is the singularity of sensitivity $\ln w$ when $w=0$, and the other is the singularity of fast diffusion $(u^p)_{xx}=p(\frac{u_x}{u^{1-p}})_x$ for $0<p<1$ when $u=0$.

When $p=1$, the system \eqref{orginal model} was proposed by Levine \emph{et al.} \cite{Background} to model the initiation of tumor angiogenesis, where $u$ and $w$ denote the density of vascular endothelial cells and the concentration of vascular endothelial growth factor, respectively. In this biological process, the diffusion of endothelial growth factor is neglected since it is less important than its interaction with the endothelial cells. The mathematical derivation of this singular chemotaxis model has been obtained in Refs.~\cite{hopf-cole} and \cite{Othmer} from the viewpoint of reinforced random walks. We also refer to Refs.~\cite{Bellomo15} and \cite{Bellomo} for the derivations of a
variety of cross-diffusion models via the micro-macro asymptotic method.

Since the pioneering work of Keller and Segel \cite{typicalK-S}, the analysis of traveling waves has become one of the most important research topics in chemotaxis, as many complex wave patterns have been observed in various experiments of chemotaxis \cite{Adler2,Gold,Narla,Saragosti,Welch}. For instance, E. coli in a capillary tube might move as a traveling band by consuming oxygen \cite{Adler2}, Dictyostelium discoideum generated a spiral wave in seeking cyclic adenosine monophosphate \cite{Gold}, and myxobacteria migrated towards some chemical signal as a periodic traveling wave \cite{Welch}. In mathematics, most of the achievements are concerned with the case of linear diffusion (i.e.~$p=1$). See Ref.~\cite{Hillen} for the first result on the existence of monotone traveling wave, Refs. \cite{HJin,20JDE} for the stability of such traveling waves under appropriate perturbations, and Refs.~\cite{Henderson,Li-Wang,Salako} for the effect of logistic growth. When the chemical diffusion is considered in the second equation, the existence and stability of monotone traveling wave solutions were studied in Ref.~\cite{stabilitydiffusion}. For more analytical works on traveling waves of chemotaxis models with linear diffusion, we refer the interested readers to Ref.~\cite{Wangreview}.

When $p\neq1$, the system \eqref{orginal model} models the chemotactic behaviors of cells or bacteria in porous media. The chemotaxis of cells and bacteria in porous media is important in both experiments and mathematics. Indeed, experiments were took in Refs.~\cite{OFSF} and \cite{VP} to quantify the bacterial chemotaxis in porous media. To prevent overcrowding of bacteria, nonlinear diffusion for chemotaxis models was introduced in Refs.~\cite{BFD,Carrillo,Choi-Wang,HP}. Furthermore, Mendez \textit{et al.}~\cite{men} derived mathematically the chemotaxis model with nonlinear diffusion from the viewpoint of Markovian reaction-random walks. There are also fruitful theoretical works on the chemotaxis model of self-aggregation type with nonlinear diffusion, where the chemical signal is generated by the cells. See Refs.~\cite{Bian,Carrillo-22,Sugiyama-Kawakami,Sugiyama-Yahagi} for the uniqueness, extinction behavior, blowup problem and stationary distributions of the solutions if the diffusion of cells is fast, Refs.~\cite{Tao-Winkler12,Tao-Winkler13} for the global boundedness of solutions, and Refs.~\cite{Bedrossian,Bian,Blanchet,Chen} for the colorful dynamics of the system if the diffusion of cells is slow. We refer to Ref.~\cite{Carrillo-19} for a systematical review of literatures in this direction.

If the chemotaxis model with nonlinear diffusion is of consumption type, like \eqref{orginal model}, the understanding of its dynamics is quite limited. For the slow diffusion case, Yan and Li \cite{Yan} constructed a global generalized solution to the system \eqref{orginal model} on a bounded domain with Neumann boundary condition, and Jin \cite{Jin} showed the existence of global weak solutions for large initial data and proved the stability of constant steady states to a chemotaxis-consumption model with linear sensitivity. Regarding the traveling waves, Kim and his collaborators \cite{CK15,CK20} are the first to construct compactly supported traveling waves to \eqref{orginal model} in the slow diffusion region (i.e.~$p>1$), while the problem of stability of such waves was left open due to the strong degeneracy of slow diffusion. Very recently, Refs.~\cite{Arias18,Campos23} have constructed a series of exotic traveling waves that may be regular, singular and even discontinuous for the porous-media
flux-saturated chemotaxis models, while the stability of these traveling waves are unknown. The purpose of this paper is to deal with the much less understood case of fast diffusion (i.e.~$0<p<1$). We shall show both the existence and stability of traveling waves in the fast diffusion case.

{\bf Traveling waves and singularities}. Before introducing the challenges of this problem, we present some elementary properties of the traveling waves. Here we mean the traveling wave of system \eqref{orginal model} is a non-constant smooth self-similar solution $(U, W)$ in the form of
$$
(u, w)(x, t)=(U, W)(z), \quad z=x-s t,\quad x\in(-\infty,\infty),
$$
satisfying
\begin{equation}\label{oriTW boundary data}
	U( \pm \infty)=u_{ \pm}, \quad W( \pm \infty)=w_{ \pm}, \quad U^{\prime}( \pm \infty)=W^{\prime}( \pm \infty)=0,
\end{equation}
where $'=\frac{d}{dz}$, $z$ is called the moving coordinate and $s>0$ is the wave speed. Substituting this wave ansatz into the system \eqref{orginal model} yields the equations of $(U,W)$
\begin{equation}\label{oritravleing wave equ}\begin{cases}
		-sU_z=(U^p)_{zz}-\chi\left[U (\ln W)_z \right]_z, \\
		s W_z=UW.
	\end{cases}
\end{equation}
Notice that by \eqref{oriTW boundary data} and the second equation of \eqref{oritravleing wave equ}, it holds that
\begin{equation}\label{upmwpm}
	u_{\pm}w_{\pm}=0.
\end{equation}
On the other hand, thanks to $u_{\pm}\geq 0$, $w_{\pm}\geq0$ and $s>0$, we get from the second equation of \eqref{oritravleing wave equ} that $W_z>0$. This implies that $w_+>0$. Hence, by \eqref{upmwpm}, we have $u_+=0$ and subsequently $u_->0$. Utilizing \eqref{upmwpm} again, it gives $w_-=0$. Then the following properties of $(U,W)(z)$  can be verified:
\begin{equation}\label{properties}
	U_z<0,~ W_z>0,~ \text { for } z \in(-\infty,+\infty),~ w_{+}>0,~ u_{-}>0,~ u_{+}=w_{-}=0.
\end{equation}
Therefore, to investigate the stability of traveling waves to \eqref{orginal model}, it is necessary to equip \eqref{orginal model} with the following initial conditions
\begin{equation}\label{initial data}
	(u, w)(x, 0)=\left(u_0, w_0\right)(x) \rightarrow\left\{\begin{aligned}
		(0,w_+),&\text { as } x \rightarrow +\infty, \\
		(u_-,0),&\text { as } x \rightarrow -\infty.
	\end{aligned}\right.
\end{equation}

One can observe from the fact $u_+=w_-=0$ that, the system \eqref{orginal model} with $0<p<1$ has two types of singularities: one is the singular sensitivity at the constant-state $w_-=0$, and the other is the singular diffusivity at the constant-state $u_+=0$.

In this paper, we shall exploit some novel strategies to overcome the analytical difficulties generated by the interaction of the two singularities. As in the arguments of Refs.~\cite{HJin,20JDE,Nonlinearstability-1}, we take the following Hopf-Cole transformation
\begin{equation}\label{hopf-cole}
	v=-(\ln w)_x,
\end{equation}
to remove the singular logarithmic sensitivity. Then we transfer the system \eqref{orginal model} with \eqref{initial data} into the following parabolic-hyperbolic system
\begin{equation}\label{transformed model}
	\left\{\begin{aligned}
		u_t&=(u^p)_{x x}+\chi(u v)_x, \\
		v_t&=u_x,
	\end{aligned}\right.
\end{equation}
with initial data
\begin{equation}\label{transinitial data}
	(u, v)(x, 0)=\left(u_0, v_0\right)(x) \rightarrow\left\{\begin{aligned}
		&(0,0),\ \text { as } x \rightarrow +\infty, \\
		&(u_-,v_-),\ \text { as } x \rightarrow -\infty,
	\end{aligned}\right.
\end{equation}
where
\begin{equation*}
	v_0(x)=-(\ln w_0)_x \text{ and }  v_{-}=-\lim_{x \rightarrow-\infty} \frac{w_{0 x}}{w_0}.
\end{equation*}
Now the primary obstacle is the singular diffusivity since $(u^p)_{xx}=p(u^{p-1}u_x)_x$. In addition, the nonlinear diffusion $(u^p)_{xx}$ results in undesirable tedious estimates. In this paper, we shall develop some new ideas to establish the existence and stability of traveling waves to the system \eqref{orginal model} with $0<p<1$. Specifically, we are going to show that:
\begin{enumerate}[(i)]
	\item the  system \eqref{orginal model} with \eqref{initial data} admits a unique (up to a shift) monotone traveling wave solution $(U, W)(x-st)$ satisfying
	$U_z(z)<0$ and $W_z(z)>0$ for all $z\in \mathbb{R}$ (see Theorem \ref{original TW existence});
	
	\item the traveling wave $(U, W)$ obtained above is nonlinearly asymptotically stable in some topological space (see Theorem \ref{original TW stability}).
\end{enumerate}

The stability of traveling waves with fast diffusions and the singular state $u_+=0$ is also numerically demonstrated in different cases. Remarkably, the numerical simulations show that the stronger fast diffusion causes the shape of traveling waves to be more steeper like shock waves, and the oscillations slowly disappear. This means that the effect of fast diffusion with the singular state $u_+=0$ is essential for the structure of traveling waves. This new phenomenon is a first observation.

{\bf Strategies for treating singularities}. To prove the result (i), we first establish the existence of monotone traveling wave solution $(U,V)$ to transformed system \eqref{transformed model}-\eqref{transinitial data}. Based on the conservation structure of this system, it is easy to see that $(U,V)$ satisfies a first-order ordinary differential equation that can be easily resolved. Then we define the wave component $W$ through the second equation of \eqref{oritravleing wave equ}. Therefore, the existence of traveling waves to the system \eqref{orginal model} can be obtained without too much analytical effort. In order to show the result (ii), it is natural to study the asymptotic stability of $(U,V)$ to the system \eqref{transformed model}-\eqref{transinitial data} first. However, we still need to deal with the challenge of the singularity of  $(u^{p-1}u_x)_x$ since $u_+=0$ and $0<p<1$. To settle this difficulty, we first employ the technique of taking anti-derivatives to classify the strength of the singularity. Then according to the structure of the fast diffusion, we take a weighted functional space to control the singularity where the weights are carefully constructed. In particular, we have to select a piecewise smooth weight function in the basic $L^2$ estimate. Furthermore, in the $H^1$ and $H^2$ estimates, the interaction of the singular diffusivity and weighted functional space results in an unfavorable obstacle that, non-zero boundary terms may arise in the integration by parts. We break down this obstacle by introducing a regular approximate weight function that removes part of the singularity. After establishing uniform estimates that are independent of the artificial parameter, we obtain the desired energy estimates by using the Fatou's Lemma. To our knowledge, this is the first analytical work on the nonlinear stability
of traveling waves to the partial differential equations (PDE) with fast diffusion. The approach based on weighted energy estimates is flexible and is expected to be useful for the study of stability of wave patterns to other more sophisticated PDE models.

Before concluding this section, we recall some related works on the transformed system \eqref{transformed model} with $p=1$. In the one-dimensional case, the nonlinear stability of traveling waves was first obtained by Li and Wang \cite{Nonlinearstability-1} under appropriate perturbations if the waves are away from zero. Especially, the authors discovered a cancellation structure in the energy estimate that was quite helpful in the following studies. Recently, Choi \textit{et al.}~\cite{Choi} proved the orbital stability of traveling waves to \eqref{transformed model} with $p=1$ under general perturbations. The global existence of large solutions for the  Cauchy problem and the initial-boundary value problem were carried out in Refs.~\cite{globalwellposedness3,DLiPanZhao} and \cite{Martinez,globalwellposedness2}, respectively. In the multidimensional case, the global existence of small solutions to the Cauchy problem and the initial-boundary value problem were studied in Refs.~\cite{multidimensional2,multidimensional-3} and \cite{LiPanZhao,Rebholz}, respectively. When the chemical diffusion is included in the second equation, we refer to Refs.~\cite{Hou,Wang-Xiang,Winkler-2016,Winkler-2018} for various interesting works.

The rest of this paper is organized as follows. In section \ref{main results}, we state our main results on the existence and nonlinear stability of traveling wave solutions to the parabolic-hyperbolic system \eqref{transformed model} and the original chemotaxis system \eqref{orginal model}, respectively. Section \ref{existence} is devoted to the proof of the existence of traveling waves. In section \ref{nonlinear stability}, we first show the nonlinear stability of traveling waves to the system \eqref{transformed model} on the basis of weighted energy estimates, and then transfer the stability result back to the original chemotaxis model \eqref{orginal model}. In section \ref{numerical}, we present some numerical simulations in different cases, which further perfectly confirm our theoretical results of wave stability.

\section{Preliminaries and main results}\label{main results}

By using the Hopf-Cole transformation \eqref{hopf-cole}, the chemotaxis system \eqref{orginal model} is transformed into the parabolic-hyperbolic system \eqref{transformed model}, in which the logarithmic singularity of the sensitive function is removed. In the following we shall first study the existence and nonlinear stability of traveling wave solutions to the transformed system \eqref{transformed model}-\eqref{transinitial data}, and then transfer the results back to the original chemotaxis system.

The traveling wave solution of \eqref{transformed model}-\eqref{transinitial data} is a non-constant self-similar solution given by
\begin{equation}
	u(x,t)=U(z), \	v(x,t)=V(z),\quad z=x-st.
\end{equation}
Substituting this wave ansatz into \eqref{transformed model}, we get the following equations of $(U,V)$
\begin{equation}\label{travleing wave equ}\begin{cases}
		-sU_z=(U^p)_{zz}+\chi(U V)_z, \\
		-s V_z=U_z.
	\end{cases}
\end{equation}
From \eqref{properties} and \eqref{hopf-cole}, one can easily see that
\begin{equation}\nonumber
	V(z)=-\frac{W_z(z)}{W(z)}<0,\quad V(+\infty)=-\lim _{z \rightarrow+\infty} \frac{W_z(z)}{W(z)}=0 .
\end{equation}
Hence, the boundary conditions of \eqref{travleing wave equ} read
\begin{equation}\label{TW boundary data}
	\!U(\!+\!\infty)\!=\!V(\!+\!\infty)\!=0,~U(\!-\!\infty)\!\!=u_->0,~V(\!-\!\infty\!)=\!v_-<0,~ U^{\prime}( \pm \infty\!)=\!V^{\prime}( \pm \infty)\!=\!0.
\end{equation}

It is expected that $\underset{z\rightarrow\pm\infty}{\lim}(U^p)_z(z)=0$. Then integrating \eqref{travleing wave equ} over $(-\infty,+\infty)$ yields the Rankine-Hugoniot condition
\begin{equation}\label{R-H condition}
	\begin{cases}
		-s\left(0-u_{-}\right)-\chi\left(0-u_{-} v_{-}\right)=0, \\
		-s\left(0-v_{-}\right)-\left(0-u_{-}\right)=0,
	\end{cases}
\end{equation}
which further gives
\begin{equation}\label{s^2}
	s=\sqrt{\chi u_-} \ \text{ and } \	v_-=-\frac{u_-}{s}<0.
\end{equation}

\begin{theorem}\label{existence of TW}
	Suppose that $(u_-,v_-)$ satisfies \eqref{R-H condition}. Then the system \eqref{travleing wave equ}-\eqref{TW boundary data} has a unique (up to a translation) smooth monotone traveling wave solution $(U, V)(z)$ satisfying
	\begin{equation}\label{Uz}
		U_z(z)<0,\quad V_z(z)>0,\quad \forall \ z\in \mathbb{R},
	\end{equation} and the wave speed $s=-\chi v_->0$.
	Moreover,
	\begin{equation}\begin{split}
			U(z)&\sim Cz^{-\frac{1}{1-p}}, \ \text{ as } z\rightarrow +\infty,\\
			u_--U(z)&\sim C\mathrm{e}^{\frac{\chi}{sp}u_{-}^{2-p}z}, \ \text{ as } z\rightarrow -\infty.\end{split}\end{equation}
	
\end{theorem}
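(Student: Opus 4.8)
The plan is to collapse the system \eqref{travleing wave equ}--\eqref{TW boundary data} into a single first-order autonomous ODE for $U$, and then obtain every assertion by an elementary phase-line analysis. First I would integrate the second equation $-sV_z=U_z$ of \eqref{travleing wave equ}: using $U(+\infty)=V(+\infty)=0$ from \eqref{TW boundary data} this yields the pointwise relation $V=-U/s$ along the whole profile, hence $UV=-U^2/s$. Substituting into the first equation, rewritten as $-sU_z=(U^p)_{zz}-\tfrac{\chi}{s}(U^2)_z$, and integrating once over $(z,+\infty)$ with $U(+\infty)=0$ and the expected decay $(U^p)_z(+\infty)=0$, I get $(U^p)_z=\tfrac{\chi}{s}U^2-sU$. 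Since $s=\sqrt{\chi u_-}$ by \eqref{s^2}, equivalently $\chi/s=s/u_-$, this simplifies to the scalar equation
\[
(U^p)_z=\frac{s}{u_-}\,U\,(U-u_-),\qquad\text{equivalently}\qquad U_z=\frac{s}{p\,u_-}\,U^{2-p}(U-u_-),
\]
and differentiating back recovers \eqref{travleing wave equ}; so, together with the correct end states, this equation is equivalent to the traveling-wave problem.

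Next I would analyze $U_z=F(U)$ with $F(U):=\tfrac{s}{p u_-}U^{2-p}(U-u_-)$ on $U\in(0,u_-)$. Here $F$ is smooth on $(0,\infty)$, extends continuously up to $U=0$, vanishes on $[0,u_-]$ exactly at the two endpoints, is strictly negative in between, and satisfies $F'(0)=0$ --- a degeneracy at $U=0$ caused precisely by $2-p>1$, i.e.\ $0<p<1$. Consequently any trajectory launched from an interior point is strictly decreasing, stays trapped in $(0,u_-)$ for all $z$ (it cannot cross an equilibrium), and by a standard continuation argument is globally defined on $\mathbb{R}$ with monotone limits at $z=\pm\infty$ that must be zeros of $F$; hence $U(-\infty)=u_-$ and $U(+\infty)=0$. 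This produces a smooth heteroclinic profile $U$, and $V:=-U/s$ then solves \eqref{travleing wave equ}--\eqref{TW boundary data} with $V(-\infty)=-u_-/s=v_-$, $V(+\infty)=0$ and $V_z=-U_z/s>0$, which is \eqref{Uz}. Moreover $(U^p)_z=\tfrac{s}{u_-}U(U-u_-)\to0$ at both ends, so the profile is consistent with the integration step and with \eqref{R-H condition}; uniqueness up to a translation is immediate from autonomy (fixing, say, $U(0)=u_-/2$); and $-\chi v_-=\chi u_-/s=s^2/s=s$ gives the claimed wave speed.

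Finally, for the asymptotics I would linearize $U_z=F(U)$ at each end state. As $z\to-\infty$, set $\phi:=u_--U>0$; then $\phi_z=\tfrac{s}{p u_-}(u_--\phi)^{2-p}\phi=\big(\tfrac{s}{p}u_{-}^{1-p}+o(1)\big)\phi$ as $\phi\to0$, a nondegenerate linear equation with positive rate $\lambda:=\tfrac{s}{p}u_{-}^{1-p}=\tfrac{\chi}{sp}u_{-}^{2-p}$, so a two-sided Gr\"onwall comparison gives $u_--U(z)\sim C\mathrm{e}^{\lambda z}$. As $z\to+\infty$ the end state $U=0$ is a \emph{degenerate} equilibrium, $F(U)\sim-\tfrac{s}{p}U^{2-p}$ with $2-p>1$, so a naive linearization is useless; instead I would track the flux variable, computing $\tfrac{d}{dz}U^{p-1}=(p-1)U^{p-2}U_z=\tfrac{s(1-p)}{p u_-}(u_--U)\to\tfrac{s(1-p)}{p}$, whence $U^{p-1}\sim\tfrac{s(1-p)}{p}\,z$ and therefore $U(z)\sim\big(\tfrac{s(1-p)}{p}\big)^{-1/(1-p)}z^{-1/(1-p)}=Cz^{-1/(1-p)}$. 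The one genuinely delicate point is exactly this degenerate end state, where the fast-diffusion singularity surfaces as an \emph{algebraic} rather than exponential decay rate; passing to $U^{p-1}$ (equivalently to the flux $(U^p)_z$) instead of $U$ itself is what bypasses the difficulty, and everything else is routine ODE analysis.
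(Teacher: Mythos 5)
Your proposal is correct and follows essentially the same route as the paper: both integrate the traveling-wave system once, eliminate $V$ via $V=-U/s$, and reduce everything to the separable scalar ODE $U_z=\tfrac{\chi}{sp}U^{2-p}(U-u_-)$ (your $\tfrac{s}{pu_-}$ coefficient equals $\tfrac{\chi}{sp}$ by the Rankine--Hugoniot relation $s^2=\chi u_-$), and both then establish existence, monotonicity, global definedness and uniqueness up to translation from the autonomous phase line. Where the paper proves global definedness by explicitly inverting $z=\int_{u_-/2}^U h(\tau)^{-1}\,d\tau$ and checking divergence of that integral at both endpoints, you invoke the fact that $F$ is $C^1$ up to $U=0$ (since $1-p>0$) so uniqueness forbids reaching the equilibria in finite time; these are equivalent, and your check that $(U^p)_z=\tfrac{s}{u_-}U(U-u_-)\to0$ at both ends properly closes the integration loop. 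For the asymptotics, the paper compares with an explicitly solvable model equation $\widehat U_z=-\tfrac{\chi}{sp}u_-\widehat U^{2-p}$, whereas you track the flux variable $U^{p-1}$ and read off the algebraic rate from $\tfrac{d}{dz}U^{p-1}\to\tfrac{s(1-p)}{p}$; these two devices are the same computation in different clothes, and both correctly isolate the degenerate (fast-diffusion) equilibrium at $U=0$ as the source of the algebraic rather than exponential decay. Your rates match the paper's: at $-\infty$ your $\lambda=\tfrac{s}{p}u_-^{1-p}$ equals $\tfrac{\chi}{sp}u_-^{2-p}$, and at $+\infty$ the exponent $-1/(1-p)$ agrees.
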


\begin{remark} The wave component $U$ decays at an algebraic rate as $z\rightarrow+\infty$. This is in contrast to the case of linear diffusion where the waves decay to $0$ at an exponential rate as $z\rightarrow+\infty$ (see Refs. \cite{HJin} and \cite{20JDE}). This phenomenon also verifies the principle that the range $0<p<1$ is usually referred to as \lq\lq fast diffusion\rq\rq, in the sense that the faster diffusion (than linear diffusion) occurs for small values of the density.
	
\end{remark}

We next investigate the stability of the traveling wave solution $(U,V)(x-st)$ to the transformed system \eqref{transformed model}-\eqref{transinitial data}. One may observe that this new system still has a strong singularity in the diffusivity since $0<p<1$, which leads to a challenging problem. In this paper, we shall develop some novel ideas to resolve this strong singularity.

To state our result more precisely, we introduce some notation. The integrals $\int_{\mathbb{R}}f(x)\mathrm{d}x$ and $\int_{0}^{t}\int_{\mathbb{R}}f(x,\tau)\mathrm{d}x\mathrm{d}\tau$ will be abbreviated as $\int f(x)$ and $\int_{0}^{t}\int f(x,\tau)$, respectively. $H^{k}(\mathbb{R})$ is the usual $k$-th order Sobolev space with norm $\|f\|_{H^k(\mathbb{R})}:=\left(\sum_{j=0}^k\|\partial_x^j f\|_{L^2(\mathbb{R})}^2\right)^{\frac{1}{2}}$. $H^{k}_{w}(\mathbb{R})$ denotes the weighted space of measurable functions $f$ so that $\sqrt{w} \partial_x^j f \in L^2$ for $0\leq j\leq k$ with norm $\|f\|_{H_w^k(\mathbb{R})}:=\left(\sum_{j=0}^k \int w(x)|\partial_x^j f|^2 d x\right)^{\frac{1}{2}}$. For simplicity, we denote $\|\cdot\|:=\|\cdot\|_{L^2(\mathbb{R})}$, $\|\cdot\|_w:=\|\cdot\|_{L_w^2(\mathbb{R})}$, $\|\cdot\|_k:=\|\cdot\|_{H^k(\mathbb{R})}$ and $\|\cdot\|_{k, w}:=\|\cdot\|_{H_w^k(\mathbb{R})}$.

The nonlinear stability of the traveling wave solution to transformed system \eqref{transformed model}-\eqref{transinitial data} is stated as follows.

\begin{theorem}\label{transformed stability}
	Let $(U,V)(x-st)$ be the traveling wave solution obtained in Theorem \ref{existence of TW}. Assume that there exists a constant $x_0$ such that $\phi_0(+\infty)=\psi_0(+\infty)=0$ where
	\begin{equation*}
		\left(\phi_0, \psi_0\right)(x):=\int_{-\infty}^x\left(u_0(y)-U(y+x_0), v_0(y)-V(y+x_0)\right) dy.
	\end{equation*}
	Then there exists a positive constant $\epsilon_{0}$ such that if $\left\|u_0-U\right\|_{1, w_3}+\left\|v_0-V\right\|_{1, w_4}+\left\|\phi_0\right\|_{w_1}+\left\|\psi_0\right\|_{w_2} \leq \epsilon_0$, then the Cauchy problem \eqref{transformed model}-\eqref{transinitial data} admits a unique global solution $(u,v)(x,t)$ satisfying
	\begin{equation}\label{2.10}
		u-U \in C\left([0, \infty) ; H_{w_3}^1\right),\
		v-V \in C\left([0, \infty) ; H_{w_4}^1\right),	
	\end{equation}
	where the weight functions $w_i$, $i=1,\dots,4$, are defined by
	\begin{equation}\label{weight function-2}
		w_1=U^{-(\alpha+1)},\ w_2=U^{-\alpha},\ w_3=U^{-2},\ w_4=U^{-1},
	\end{equation}
	with
	\begin{equation*}
		\alpha=\begin{cases}
			2p-1,\quad&{\rm if } \ \frac{1}{2}<p<1,\\
			0,\quad &{\rm if } \ 0<p\leq \frac{1}{2}.
		\end{cases}
	\end{equation*}
	Furthermore, the solution has the following asymptotic convergence
	\begin{equation}\label{2.12}
		\sup _{x \in \mathbb{R}}|(u, v)(x, t)-(U, V)(x-s t+x_0)| \rightarrow 0, \quad \text { as } t \rightarrow+\infty.
	\end{equation}
\end{theorem}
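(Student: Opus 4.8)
The plan is to follow the standard anti-derivative + weighted energy scheme, but carefully engineered to handle the fast-diffusion singularity at $u_+=0$. First I would reformulate the problem in the moving coordinate $z = x-st+x_0$ and introduce the perturbation $(\phi,\psi)$ via $u-U = \phi_z$, $v-V = \psi_z$ (this is legitimate since the zero-mass condition $\phi_0(+\infty)=\psi_0(+\infty)=0$ is preserved by the conservation structure of \eqref{transformed model}, and the Rankine--Hugoniot relations \eqref{R-H condition} ensure the perturbation integrates to zero on both ends). Substituting and integrating once in $z$ yields a coupled system for $(\phi,\psi)$ in which the principal term is $p\,(U^{p-1}\phi_z)_z + (\text{lower order})$; the coefficient $U^{p-1}$ blows up like $z^{(1-p)/(1-p)\cdot(1-p)}\sim z$-type powers as $z\to+\infty$ by the algebraic decay $U\sim Cz^{-1/(1-p)}$ from Theorem 2.1. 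The goal is a closed a priori estimate in the weighted norms $\|\phi\|_{w_1}, \|\psi\|_{w_2}, \|u-U\|_{1,w_3}, \|v-V\|_{1,w_4}$ with $w_i$ as in \eqref{weight function-2}, from which the global existence and the $L^\infty$ decay \eqref{2.12} follow by the usual continuity argument plus a Sobolev/interpolation inequality and the integrability of $\|(u-U)_x\|^2 + \|(v-V)_x\|^2$ in time.

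The main steps, in order: (1) \emph{Basic $L^2$ estimate.} Multiply the $\phi$-equation by $w_1\phi$ and the $\psi$-equation by $w_2\psi$ (or a suitable linear combination exploiting the Li--Wang cancellation structure mentioned for the $p=1$ case) and integrate. The choice $w_1 = U^{-(\alpha+1)}$, $w_2=U^{-\alpha}$ is dictated by making the chemotactic cross terms cancel and by keeping the diffusion term's good sign; because $U$ is only piecewise-controlled near the singular end, one must take a \emph{piecewise smooth} weight here (as the authors flag), matching a power of $U$ for large $z$ to a bounded weight for $z$ bounded. The split $\alpha = 2p-1$ for $p>1/2$ versus $\alpha=0$ for $p\le 1/2$ reflects exactly how strong the singularity $U^{p-1}$ is relative to what the weight can absorb. (2) \emph{$H^1$ estimate.} Differentiate once, multiply $(u-U)_z$ by $w_3 = U^{-2}$ and $(v-V)_z$ by $w_4=U^{-1}$, integrate. (3) \emph{Higher-order ($H^2$) estimate} analogously. (4) Combine (1)--(3) to close the estimate for small $\epsilon_0$, then run the local-existence + continuation argument and deduce \eqref{2.10} and \eqref{2.12}.

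The hard part — and where the genuinely new technique enters — will be steps (2)--(3): when one integrates by parts the singular diffusion term $p(U^{p-1}(u-U)_z)_z$ against a weight that is itself a negative power of $U$, the boundary contribution at $z\to+\infty$ need not vanish, because $U^{p-1}\cdot U^{-k}\cdot(\text{derivative factors})$ can fail to decay; the naive energy identity therefore does not close. The remedy I would implement, following the paper's stated strategy, is to replace $U$ in the weight by a \emph{regularized} weight $U_\delta$ (e.g. $U_\delta = U + \delta$ or a smooth truncation) that kills the singular boundary term, derive all the estimates with constants \emph{uniform in $\delta$} — this requires tracking that every $\delta$-dependent error term is controlled by the good dissipation terms without using the singularity — and then send $\delta\to 0$, using Fatou's lemma to recover the genuine weighted norms. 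A secondary technical nuisance is that $U^{p-1}$ appears multiplied against $\phi,\psi$ and their derivatives in cross terms; these must be dominated using the explicit asymptotics of $U$ (Theorem 2.1), Hardy-type inequalities in the weighted spaces, and the smallness of $\epsilon_0$ to absorb the quadratic remainders. Once the uniform-in-$\delta$ a priori bound is in hand, the rest is routine.
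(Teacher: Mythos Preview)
Your plan is correct and matches the paper's approach closely: anti-derivative reformulation, weighted $L^2$--$H^1$--$H^2$ energy hierarchy with weights $U^{-(\alpha+1)},U^{-\alpha},U^{-2},U^{-1}$, and the regularization $U_\epsilon=U+\epsilon$ followed by Fatou's lemma to handle the boundary terms in the higher-order estimates. One small correction: the ``piecewise smooth weight'' in the $L^2$ step is not a spatial patching of a power weight with a bounded one as you describe --- the paper uses the single smooth weight $w(U)=U^{-\alpha}$ throughout, and the piecewise aspect refers only to the choice of the exponent $\alpha$ as a function of $p$; the value $\alpha=2p-1$ (resp.\ $\alpha=0$) together with a matching Cauchy--Schwarz parameter $\eta=p$ (resp.\ $\eta=1/2$) is exactly what makes the resulting quadratic form in $(\phi,\psi)$ nonnegative. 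Also be aware that closing the $H^1$ estimate requires an auxiliary bound on $\int_0^t\!\int U\psi_z^2$ obtained by testing the $\phi$-equation against $\psi_z$, plus a splitting of the $\psi_z$-dissipation at a fixed point to absorb the $N(t)\int U^{1-p}\psi_z^2$ remainder, but these are routine once the main scheme is set up.
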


By using the Hopf-Cole transformation \eqref{hopf-cole}, we transfer the results of Theorems \ref{existence of TW} and \ref{transformed stability} back to the original chemotaxis model \eqref{orginal model}.

\begin{theorem}[Existence of traveling waves]\label{original TW existence}
	Assume that $u_{\pm}$ and $w_{\pm}$ satisfy \eqref{properties}. Then the system \eqref{orginal model} has a unique (up to a shift) smooth monotone traveling wave solution $(U,W)(z)$ satisfying
	\begin{equation}\label{Wz}
		U_z(z)<0,\quad W_z(z)>0,\quad \forall \ z\in\mathbb{R}.	
	\end{equation}
\end{theorem}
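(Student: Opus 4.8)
The plan is to build the wave component $W$ directly from the traveling wave $(U,V)$ of Theorem \ref{existence of TW} by inverting the Hopf-Cole transformation \eqref{hopf-cole}, so that essentially all of the analysis is inherited. First I would note that integrating the second equation of \eqref{travleing wave equ} from $z$ to $+\infty$ and using the boundary data \eqref{TW boundary data} gives the pointwise identity $V(z)=-U(z)/s$; hence the relation $v=-(\ln w)_x$ suggests $(\ln W)_z=-V=U/s$, which I would solve by setting
\begin{equation*}
	W(z):=w_+\exp\left(-\frac{1}{s}\int_z^{+\infty}U(y)\,\mathrm{d}y\right).
\end{equation*}

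The only place I expect to need the sharp conclusion of Theorem \ref{existence of TW} is in checking that $W$ is well defined: since $U(z)\sim Cz^{-1/(1-p)}$ as $z\to+\infty$ and $\tfrac{1}{1-p}>1$ for $0<p<1$, the function $U$ is integrable near $+\infty$, so the defining integral converges. Granting this, $W$ is smooth and strictly positive (it is the exponential of a smooth function, as $U$ is smooth), and $W_z=(U/s)W>0$ because $U>0$ on all of $\mathbb{R}$ (being strictly decreasing from $u_->0$ to $0$); together with $U_z<0$ from \eqref{Uz} this gives the monotonicity \eqref{Wz}. For the end states, $W(+\infty)=w_+$ holds by construction, $W(-\infty)=0=w_-$ because $\int_z^{+\infty}U\to+\infty$ as $z\to-\infty$, and $W'(\pm\infty)=0$ since in $W_z=(U/s)W$ one factor tends to a finite limit while the other tends to $0$. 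Finally, because $(\ln W)_z=-V$ we have $-\chi[U(\ln W)_z]_z=\chi(UV)_z$, so the first equation of \eqref{oritravleing wave equ} for $(U,W)$ reduces to the first equation of \eqref{travleing wave equ} for $(U,V)$, which holds by Theorem \ref{existence of TW}, while the second equation of \eqref{oritravleing wave equ}, namely $sW_z=UW$, holds by the construction of $W$. Thus $(U,W)$ is a smooth monotone traveling wave of \eqref{orginal model} with the end states prescribed in \eqref{properties}.

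For uniqueness up to a shift, I would run the correspondence backwards. If $(\tilde U,\tilde W)$ is any smooth monotone traveling wave of \eqref{orginal model} with the same end states, then $\tilde W>0$ throughout (being increasing with $\tilde W(+\infty)=w_+>0$) and the second equation $s\tilde W_z=\tilde U\tilde W$ yields $\tilde V:=-(\ln\tilde W)_z=-\tilde U/s$. Using $(\ln\tilde W)_z=-\tilde V$ exactly as above, $(\tilde U,\tilde V)$ then solves \eqref{travleing wave equ} and satisfies \eqref{TW boundary data}, so the uniqueness part of Theorem \ref{existence of TW} forces $\tilde U(\cdot)=U(\cdot+c)$ for some $c\in\mathbb{R}$. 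Consequently $(\ln\tilde W)_z(z)=\tilde U(z)/s=(\ln W)_z(z+c)$, whence $\tilde W(\cdot)=W(\cdot+c)$ once the normalization $\tilde W(+\infty)=W(+\infty)=w_+$ is imposed, i.e. $(\tilde U,\tilde W)=(U,W)(\cdot+c)$. The crux of the whole argument is thus the elementary inversion of \eqref{hopf-cole}; the one genuinely new ingredient compared with the linear-diffusion case is the integrability of $U$ at $+\infty$, which here is merely algebraic and is supplied by Theorem \ref{existence of TW}.
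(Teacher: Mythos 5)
Your proposal is correct and takes essentially the same route as the paper: the paper likewise constructs $W(z)=w_+\exp\bigl(-\tfrac{1}{s}\int_z^{\infty}U(y)\,dy\bigr)$ from the ODE $sW_z=UW$, using the algebraic decay $U(z)\sim Cz^{-1/(1-p)}$ (with $\tfrac{1}{1-p}>1$) to justify convergence of the integral at $+\infty$ and the divergence of $\int_{-\infty}^0 U$ to get $W(-\infty)=0$. Your write-up is somewhat more explicit than the paper's — in particular in spelling out the verification of both equations of \eqref{oritravleing wave equ}, the limits $W'(\pm\infty)=0$, and the reduction of uniqueness to the uniqueness claim of Theorem \ref{existence of TW} — but the underlying argument is identical.
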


\begin{theorem}[Nonlinear stability]\label{original TW stability}
	Let $(U,W)(x-st)$ be the traveling wave solution obtained in Theorem \ref{original TW existence}. Then there exists a positive constant $\epsilon_{1}$ such that if $\left\|u_0-U\right\|_{1, w_3}+\left\|(\ln w_0)_x-(\ln W)_x\right\|_{1, w_4}+\left\|\phi_0\right\|_{w_1}+\left\|\psi_0\right\|_{w_2} \leq \epsilon_1$, and $\phi_0(+\infty)=\psi_0(+\infty)=0$, where
	$$
	\phi_0(x)=\int_{-\infty}^x(u_0(y)-U(y+x_0)) dy,\ \psi_0(x)=-\ln w_0(x)+\ln W(x+x_0),
	$$
	then the Cauchy problem \eqref{orginal model} with \eqref{initial data} has a unique global solution $(u,w)(x,t)$ satisfying
	\begin{equation}\nonumber
		u-U \in C\left([0, \infty) ; H_{w_3}^1\right),\
		w_x/w-W_x/W \in C\left([0, \infty) ; H_{w_4}^1\right), \	w-W\in C([0,\infty);H^2),
	\end{equation}
	and the following asymptotic convergence
	\begin{equation}
		\sup _{x \in \mathbb{R}}|(u, w)(x, t)-(U, W)(x-s t+x_0)| \rightarrow 0, \quad \text { as } t \rightarrow+\infty.
	\end{equation}
\end{theorem}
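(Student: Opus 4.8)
The plan is to deduce Theorem~\ref{original TW stability} from Theorem~\ref{transformed stability} by inverting the Hopf--Cole transformation \eqref{hopf-cole} at the level of solutions. First I would set $v_0:=-(\ln w_0)_x$ and recall $V=-(\ln W)_x=-W_z/W$, so that the hypothesis norm $\|(\ln w_0)_x-(\ln W)_x\|_{1,w_4}$ equals $\|v_0-V\|_{1,w_4}$. Since $w_4=U^{-1}$ and $w_2=U^{-\alpha}$ stay bounded away from $0$ as $z\to-\infty$, the assumptions $\psi_0\in L^2_{w_2}$ and $v_0-V\in H^1_{w_4}$ give $\psi_0\in H^1((-\infty,a))$ for each finite $a$, hence $\psi_0(-\infty)=0$; together with $\psi_{0,x}=v_0-V$ this produces the identity $\int_{-\infty}^{x}\big(v_0(y)-V(y+x_0)\big)\,dy=-\ln w_0(x)+\ln W(x+x_0)=\psi_0(x)$ with no spurious additive constant (this is precisely where the exponential approach $u_--U\sim C\mathrm{e}^{\frac{\chi}{sp}u_{-}^{2-p}z}$ from Theorem~\ref{existence of TW} makes the hypothesis natural, since $w_0$ must share the decay of $W$ as $z\to-\infty$). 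Hence $\phi_0$ and $\psi_0$ are exactly the antiderivatives required by Theorem~\ref{transformed stability}, whose smallness condition then holds with $\epsilon_0$ of the same order as $\epsilon_1$; applying it yields a unique global solution $(u,v)$ of \eqref{transformed model}--\eqref{transinitial data} with $u-U\in C([0,\infty);H^1_{w_3})$, $v-V\in C([0,\infty);H^1_{w_4})$ and $\sup_x|(u,v)(x,t)-(U,V)(x-st+x_0)|\to0$. The weighted smallness of $u-U$ also forces $u>0$, since a weighted Sobolev estimate gives $\|(u-U)/U\|_{L^\infty}\le C\|u-U\|_{1,w_3}$, using that $(\ln U)_x$ is bounded on $\mathbb{R}$.

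Next I would reconstruct the chemical component via $w(x,t):=w_0(x)\exp\!\big(-\int_0^t u(x,\tau)\,d\tau\big)$, so that $w_t=-uw$, $w(\cdot,0)=w_0$, $w>0$ for finite $x$, and $w\to w_+$, $w\to0$ as $x\to\pm\infty$. The relation $-(\ln w)_x=v$ holds because $-(\ln w)_x=v_0+\int_0^t u_x\,d\tau$ while the equation $v_t=u_x$ gives $v=v_0+\int_0^t u_x\,d\tau$; thus $(u,w)$ solves \eqref{orginal model} with \eqref{initial data}. Equivalently — and more conveniently for the estimates — from $-(\ln(w/W))_x=v-V=\psi_x$ and $\partial_t\big(-\ln(w/W)\big)=u-U=\psi_t$ with $-\ln(w/W)|_{t=0}=\psi_0$, one obtains the clean identity $w=W\mathrm{e}^{-\psi}$, where $\psi(x,t):=\int_{-\infty}^x(v-V)(y,t)\,dy$; here the traveling-wave identities $-sV=U$ and $sv_-+u_-=0$ are used to verify $\psi_t=u-U$.

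From $w=W\mathrm{e}^{-\psi}$ the remaining claims follow. First $w_x/w-W_x/W=-(v-V)\in C([0,\infty);H^1_{w_4})$ and $u-U\in C([0,\infty);H^1_{w_3})$ are already in hand. Since $W,W_z,W_{zz}$ are bounded (Theorem~\ref{existence of TW}), $\psi\in L^2$ (because $w_2\ge u_{-}^{-\alpha}$), $\psi_x=v-V\in H^1$ (because $w_4\ge u_{-}^{-1}$) and $\|\psi\|_{L^\infty}$ is small, a straightforward expansion of $\mathrm{e}^{-\psi}-1$ together with $H^1(\mathbb{R})\hookrightarrow L^\infty\cap L^4$ gives $w-W\in C([0,\infty);H^2)$. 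For the asymptotic convergence, $\sup_x|u-U|\to0$ is \eqref{2.12}, and $\sup_x|w-W|\le\|W\|_{L^\infty}\sup_x|\mathrm{e}^{-\psi}-1|\le C\|\psi\|_{L^\infty}$ with $\|\psi\|_{L^\infty}^2\le2\|\psi\|\,\|v-V\|\to0$, since $\|v-V\|$ is bounded while $\|\psi(\cdot,t)\|\to0$ — the latter being part of the time-asymptotic decay established in the proof of Theorem~\ref{transformed stability}. Uniqueness follows by running the argument in reverse: any solution $(u,w)$ of \eqref{orginal model}--\eqref{initial data} with the stated regularity yields, via $v:=-(\ln w)_x$, a solution of \eqref{transformed model}--\eqref{transinitial data} with the same data, which is unique by Theorem~\ref{transformed stability}; this determines $u$, and then $w$ is forced by $w_t=-uw$, $w(\cdot,0)=w_0$. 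The translation freedom $x_0$ is inherited from Theorem~\ref{original TW existence} and pinned down by $\phi_0(+\infty)=0$.

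The step I expect to require the most care is the $z\to-\infty$ consistency in the first paragraph: verifying that inverting the Hopf--Cole map reproduces exactly the wave $W$ with the singular end state $w_-=0$ and leaves no residual additive constant in $\ln w$. That is precisely where the logarithmic sensitivity singularity and the fast-diffusion singularity interact; everything else is a direct transcription of Theorems~\ref{existence of TW} and \ref{transformed stability} together with elementary weighted-Sobolev inequalities.
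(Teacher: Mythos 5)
Your argument follows the same route as the paper: reduce to Theorem~\ref{transformed stability} via Hopf--Cole, reconstruct $w$ from $w_t=-uw$, derive the identity $w=W\mathrm{e}^{-\psi}$, and read off regularity and convergence of $w-W$ from smallness and decay of $\psi$. You add a few useful refinements the paper glosses over --- checking $\psi_0(-\infty)=0$ so that there is no residual additive constant, noting positivity of $u$ via the weighted Sobolev bound, and spelling out uniqueness in both directions --- but these are cosmetic, not a different method.

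There is, however, a slip in the very last step. You write
$\|\psi\|_{L^\infty}^2\le 2\|\psi\|\,\|v-V\|\to 0$
and justify it by ``$\|v-V\|$ is bounded while $\|\psi(\cdot,t)\|\to 0$.'' The roles are reversed: the dissipation in Proposition~\ref{phipsi stability} controls $\int_0^\infty\|\psi_z\|_1^2\,d\tau$ but provides no integrated decay of $\psi$ itself (the equation $\psi_t=s\psi_z+\phi_z$ is a transport equation without damping in $\psi$), so $\|\psi(\cdot,t)\|\to 0$ is not established anywhere. What the proof of Theorem~\ref{transformed stability} actually gives is $\|\psi_z(\cdot,t)\|=\|v-V\|(\cdot,t)\to 0$, while $\|\psi(\cdot,t)\|\le u_-^{\alpha/2}\|\psi(\cdot,t)\|_{w_2}\le CN(0)$ is merely uniformly bounded. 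The interpolation inequality you wrote is correct, and with the factors assigned this way it does yield $\|\psi\|_{L^\infty}\to 0$; you should simply swap the justification: $\|\psi\|$ bounded, $\|\psi_z\|\to 0$. With that correction the proposal matches the paper's proof.
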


\section{Existence of traveling waves}\label{existence}

In this section, we first prove the existence of monotone traveling wave solution $(U,V)$ to the system \eqref{travleing wave equ}-\eqref{TW boundary data}. After that, we define the wave component $W$ through the second equation of \eqref{oritravleing wave equ} which yields the traveling wave solutions to the original chemotaxis system \eqref{orginal model}.

\begin{proof}[Proof of Theorem \ref{existence of TW}]
	Integrating \eqref{travleing wave equ} over $(z,+\infty)$, by \eqref{TW boundary data}, we have
	\begin{equation}\label{UV}
		\begin{cases}
			pU^{p-1}U_z=-s U-\chi U V, \\
			-s V=U,
		\end{cases}	
	\end{equation}
	which can be simplified to an ordinary differential equation (ODE) as
	\begin{equation}\label{3.2}
		U_z=\frac{U^{2-p}}{sp}\left(\chi U-s^2 \right).\end{equation}
	When $(u_-,v_-)$ satisfies \eqref{R-H condition}, by \eqref{s^2} one can see that
	\begin{equation}\label{f(U)}
		\frac{U^{2-p}}{sp}\left(\chi U-s^2 \right)=\frac{U^{2-p}}{sp}\chi\left( U-u_- \right).	
	\end{equation} Set $h(U)\triangleq\frac{U^{2-p}}{sp}\chi\left( U-u_- \right)$. Noting $h(U)<0$ for all $U\in(0,u_-)$, we only need to find the global solution of the following ODE
	\begin{equation}\label{ODE for U}
		\begin{cases}
			\frac{\mathrm{d}U}{\mathrm{d}z}=h(U), \\
			U(0)=\frac{u_-}{2},
		\end{cases}
	\end{equation}which is equivalent to
	\begin{equation}\label{eq1}
		\begin{cases}
			\frac{\mathrm{d}z}{\mathrm{d}U}=\frac{1}{h(U)} , \\
			z(\frac{u_-}{2})=0.
		\end{cases}	
	\end{equation}
	Integrating \eqref{eq1} gives
	\begin{equation*}
		z=\int_{\frac{u_-}{2}}^{U}\frac{1}{h(\tau)}d\tau\triangleq H(U).	
	\end{equation*}
	Clearly, for any given $U\in(0,u_-)$, the function $z=H(U)$ is monotonically decreasing. Moreover, since $0<p<1$, in view of \eqref{f(U)}, we get
	\[\lim_{U\rightarrow0^+}\int_{\frac{u_-}{2}}^{U}\frac{1}{h(\tau)}\mathrm{d}\tau=+\infty \ \text{ and } \ \lim_{U\rightarrow u_-^-}\int_{\frac{u_-}{2}}^{U}\frac{1}{h(\tau)}\mathrm{d}\tau=-\infty.\]
	Therefore, there exists a unique continuous function $H^{-1}$ such that
	\begin{equation*}
		U=H^{-1}(z),\quad \forall z\in(-\infty,+\infty).	
	\end{equation*}
	Using this $U$, one can define $V(z)$ through the second equation of \eqref{UV}. The monotonicity of $(U,V)$ follows from \eqref{f(U)}-\eqref{ODE for U} and the second equation of \eqref{UV}.
	
	We next investigate the convergence rates of $U(z)$ as $z\rightarrow\pm\infty$. Since $U\rightarrow 0$ as $z\rightarrow+\infty$, according to the asymptotic theory of ODE, the convergence rate of solutions to \eqref{ODE for U} as $z\rightarrow+\infty$ is determined by the equation
	\begin{equation*}
		\begin{cases}
			\frac{d\widehat{U}}{dz}=-\frac{\chi}{sp}u_-\widehat{U}^{2-p} , \\
			\widehat{U}(0)=\frac{u_-}{2}.
		\end{cases}	
	\end{equation*}
	By a direct calculation, we get
	$$
	\widehat{U}(z)=\left((\frac{u_-}{2})^{p-1}
	+\frac{\chi(1-p)}{sp}u_-z\right)^{-\frac{1}{1-p}} \text{ for }z>0,
	$$
	which indicates that
	\[U(z)\sim Cz^{-\frac{1}{1-p}}, \ \text{ as } z\rightarrow +\infty.\]
	On the other hand, the convergence rate of $U(z)$ as $z\rightarrow-\infty$ is determined by
	\begin{equation*}
		\begin{cases}
			\frac{d\overline{U}}{dz}=\frac{\chi}{sp}u_{-}^{2-p}\left( \overline{U}-u_- \right), \\
			\overline{U}(0)=\frac{u_-}{2}.
		\end{cases}	
	\end{equation*}
	It is easy to see that
	$$\overline{U}(z)=u_-- \frac{u_-}{2}\mathrm{e}^{\frac{\chi}{sp}u_{-}^{2-p}z} \text{ for }z<0.
	$$ Hence
	\begin{equation*}\begin{split}
			u_--U(z)&\sim C\mathrm{e}^{\frac{\chi}{sp}u_{-}^{2-p}z}, \ \text{ as } z\rightarrow -\infty.\end{split}\end{equation*}
	The proof is complete.
\end{proof}

\begin{remark}
	Since $U\in(0,u_-)$, it is easy to see from \eqref{ODE for U} that
	\begin{equation}\label{|Uz|estimate}
		|U_z(z)| \leq CU^{2-p}(z) \ \text{ for all }z\in(-\infty,+\infty).
	\end{equation}
	
\end{remark}

\begin{proof}[Proof of Theorem \ref{original TW existence}] The existence of $U(z)$ has been shown in Theorem \ref{existence of TW}. It remains to show the existence of bounded $W(z)$. By virtue of the second equation of \eqref{oritravleing wave equ}, it holds that
	$$
	W_z=\frac{1}{s}U(z)W(z)>0.
	$$
	According to Theorem \ref{existence of TW}, the integral $\int_0^{+\infty}U(y)dy$ is convergent, while  $\int_{-\infty}^{0}U(y)dy$ is divergent. Thus, we can solve $W$ as
	\begin{equation*}
		W(z)=w_{+} \exp \left(-\frac{1}{s} \int_z^{\infty} U(y) d y\right).	
	\end{equation*}
	This completes the proof of Theorem \ref{original TW existence}.
\end{proof}

\section{Nonlinear stability}\label{nonlinear stability}

In this section, we first study the nonlinear stability of the traveling wave solution $(U,V)$ for the parabolic-hyperbolic system \eqref{transformed model}-\eqref{transinitial data}. And then by using the Hopf-Cole transformation \eqref{hopf-cole}, we prove the nonlinear stability of the traveling wave solution $(U,W)$ for the original chemotaxis system \eqref{orginal model}.

\subsection{Reformulation of the problem}

Let $(U,V)(x-st)$ be the traveling wave obtained in Theorem \ref{existence of TW}. We write the system \eqref{transformed model} without viscosity in the form
\begin{equation*}
	\frac{\partial}{\partial t}\left[\begin{array}{l}
		u \\
		v
	\end{array}\right]+A\frac{\partial}{\partial x}\left[\begin{array}{l}
		u \\
		v
	\end{array}\right]=0,
\end{equation*}
where
\begin{equation*}
	A\triangleq\left[\begin{array}{cc}
		-\chi v & -\chi u \\
		-1 & 0
	\end{array}\right] .
\end{equation*}
Denote by $r_1\left(u_{-}, v_{-}\right)$ the first eigenvector of Jacobian matrix $A$. Then according to the theory of hyperbolic conservation laws, \cite{Smoller} the vectors $(u_-,v_-)^\top$ and $r_1\left(u_{-}, v_{-}\right)$ are linearly independent. Thus there are two numbers $x_0$ and $\gamma$ such that
\begin{equation*}
	\int_{-\infty}^{+\infty}\left(\begin{array}{l}
		u_0(x)-U(x) \\
		v_0(x)-V(x)
	\end{array}\right) d x=-x_0\left(\begin{array}{l}
		u_{-} \\
		v_{-}
	\end{array}\right)+\gamma r_1\left(u_{-}, v_{-}\right),
\end{equation*} provided that $u_0(x)-U(x)$ and $v_0(x)-V(x)$ are integrable over $(-\infty,+\infty)$. Thanks to the work of Liu and Zeng on the stability of viscous shock waves, \cite{Liu-Zeng09,Liu-Zeng15} the coefficient $\gamma$ may generate diffusion waves for small-amplitude viscous shock waves. In this paper, we aim to show the stability of \emph{large-amplitude} traveling waves and, for simplicity, consider the case $\gamma=0$. We leave the case $\gamma\neq0$ for future study. Now by the conservation laws, we have
\begin{equation}\label{shift}
	\begin{aligned}
		&\quad\int_{-\infty}^{+\infty}\left(\begin{array}{l}
			u(x, t)-U\left(x+x_0-s t\right) \\
			v(x, t)-V\left(x+x_0-s t\right)
		\end{array}\right) d x \\&=\int_{-\infty}^{+\infty}\left(\begin{array}{l}
			u_0(x)-U\left(x+x_0\right) \\
			v_0(x)-V\left(x+x_0\right)
		\end{array}\right) d x \\
		& =\int_{-\infty}^{+\infty}\left(\begin{array}{l}
			u_0(x)-U(x) \\
			v_0(x)-V(x)
		\end{array}\right) d x+\int_{-\infty}^{+\infty}\left(\begin{array}{l}
			U(x)-U\left(x+x_0\right) \\
			V(x)-V\left(x+x_0\right)
		\end{array}\right) d x \\
		& =\int_{-\infty}^{+\infty}\left(\begin{array}{l}
			u_0(x)-U(x) \\
			v_0(x)-V(x)
		\end{array}\right) d x+x_0\left(\begin{array}{l}
			u_{-} \\
			v_{-}
		\end{array}\right)\\&=\left(\begin{array}{l}
			0 \\
			0
		\end{array}\right).
	\end{aligned}
\end{equation}
We thus employ the anti-derivative technique to study the asymptotic stability of traveling wave $(U,V)$. Decompose the solution of \eqref{transformed model} as
\begin{equation}\label{decompose}
	(u, v)(x, t)=(U, V)\left(x+x_0-s t\right)+\left(\phi_z, \psi_z\right)(z, t),	
\end{equation}
where $z=x-st$. That is
\begin{equation}\label{u-U,v-V}
	(\phi, \psi)(z, t)=\int_{-\infty}^z\left(u(y, t)-U\left(y+x_0-s t\right), v(y, t)-V\left(y+x_0-s t\right)\right) d y,	
\end{equation}
for all $z\in \mathbb{R}$ and $t\geq0$. It then follows from \eqref{shift} that
\begin{equation}\label{phipsi initial data}
	\phi(\pm\infty,t)=\psi(\pm\infty,t)=0,\quad \text{for all }t>0.
\end{equation}
Without loss of generality, we assume that the translation $x_0=0$, which implies the initial perturbation is of zero mass
\begin{equation*}
	\int_{-\infty}^{+\infty}\left(\begin{array}{l}
		u_0(x)-U(x) \\
		v_0(x)-V(x)
	\end{array}\right) d x=\left(\begin{array}{l}
		0 \\
		0
	\end{array}\right).
\end{equation*}
Now substituting \eqref{decompose} into \eqref{transformed model}, integrating the system over $(-\infty,z)$, and using \eqref{travleing wave equ} and \eqref{phipsi initial data}, we derive the equation of $(\phi,\psi)$:
\begin{equation}\label{phipsi}
	\left\{\begin{array}{l}
		\phi_t=p\left(U^{p-1}\phi_{z}\right)_{z} +(s+\chi V) \phi_z+\chi U \psi_z+\chi \phi_z \psi_z+F_z,
		\\
		\psi_t=s \psi_z+\phi_z,
	\end{array}\right.
\end{equation}
where $F\triangleq \left(U+\phi_{z}\right)^{p}-U^p-pU^{p-1}\phi_{z}$, and the initial condition is given by
\begin{equation}\label{phi0psi0}
	\left(\phi_0, \psi_0\right)(z)\triangleq	\left(\phi, \psi\right)(z,0)=\int_{-\infty}^z\left(u_0-U, v_0-V\right)(y) d y,
\end{equation}
with $\left(\phi_0, \psi_0\right)( \pm \infty)=(0,0)$. One may observe that the  perturbation system  \eqref{phipsi} has a singular diffusivity near $z=+\infty$ since $U(+\infty)=0$ and $0<p<1$, in the sequel we shall carefully select weighted functional space to resolve the singularity.

\subsection{Energy estimates}

We search for the solutions of \eqref{phipsi}-\eqref{phi0psi0} in the following space
\begin{equation*}
	\begin{aligned}
		X(0,T):=\left\{ \left( \phi(z,t),\psi(z,t)\right)|\right.&\left.\phi\in C\left([0,T];L^{2}_{w_1}\right),\phi_z\in C\left([0,T];H^{1}_{w_3}\right),\right.\\&
		\left.\psi\in C\left([0,T];L^{2}_{w_2}\right),\psi_z\in C\left([0,T];H^{1}_{w_4}\right)\right\},		
	\end{aligned}
\end{equation*}
where the weight functions $w_i$, $i=1,\dots,4$, are defined by
\begin{equation}\label{weight function}
	w_1=U^{-(\alpha+1)},\ w_2=U^{-\alpha},\ w_3=U^{-2},\ w_4=U^{-1},
\end{equation}
with
\begin{equation*}
	\alpha=\begin{cases}
		2p-1,\quad&{\rm if } \ \frac{1}{2}<p<1,\\
		0,\quad &{\rm if } \ 0<p\leq \frac{1}{2}.
	\end{cases}
\end{equation*}
Denote
\begin{equation}\label{N(t)}
	N(t)\triangleq\sup_{\tau\in [0,t]}\left(\left\|\phi(\cdot,\tau)\right\|_{w_1}+\left\|\psi(\cdot,\tau)\right\|_{w_2}+\left\|\phi_z(\cdot,\tau)\right\|_{1,w_3} +\left\|\psi_z(\cdot,\tau)\right\|_{1,w_4}\right).	
\end{equation}
Then we have the following global well-posedness result for the Cauchy problem \eqref{phipsi}-\eqref{phi0psi0}.

\begin{proposition}\label{phipsi stability}
	Suppose $(\phi_0,\psi_0)\in L_{w_1}^2(\mathbb{R})\times L_{w_2}^2(\mathbb{R})$ and that $(\phi_{0z},\psi_{0z})\in H_{w_3}^{1}(\mathbb{R})\times H_{w_4}^{1}(\mathbb{R}) $. Then there exists a positive constant $\delta_0$ such that if $N(0)\leq \delta_0$, then the Cauchy problem \eqref{phipsi}-\eqref{phi0psi0} has a unique global solution $(\phi,\psi)\in X(0,\infty)$ satisfying
		\begin{align}
			\|\phi\|_{w_1}^2\!&+\!\|\psi\|_{w_2}^2\!+\!\left\|\phi_z\right\|_{1, w_3}^2\!+\!\left\|\psi_z\right\|_{1, w_4}^2\!+\!\int_0^t\left(\left\|\phi_z(\tau)\right\|_{w_5}^2\!+\!\left\|\phi_{zz}(\tau)\right\|_{1, w_6}^2\!+\!\left\|\psi_z(\tau)\right\|_{1}^2\right) d \tau\nonumber \\
			&\quad\leq C\left(\|\phi_0\|_{w_1}^2+\|\psi_0\|_{w_2}^2+\left\|\phi_{0z}\right\|_{1, w_3}^2+\left\|\psi_{0z}\right\|_{1, w_4}^2\right) \leq C N^2(0),\label{priori estimate}
		\end{align}
	for any $t\in [0,\infty)$, where $w_i$, $i=1,\dots,4$, are defined by \eqref{weight function}, $w_5=U^{p-\alpha-2}$ and $w_6=U^{p-3}$.
	
\end{proposition}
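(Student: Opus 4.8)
The plan is to run the classical scheme: local existence in $X(0,T)$ combined with uniform a priori estimates and a continuation argument. For local existence I would set up a contraction mapping for \eqref{phipsi}--\eqref{phi0psi0} on a short time interval; the only non-routine point is that the diffusion coefficient $U^{p-1}$ and the weight $w_3=U^{-2}$ both degenerate as $z\to+\infty$, so the linearized problem must be treated in the weighted spaces exactly as in the energy estimates below. Granting local existence, it then suffices to show that any solution $(\phi,\psi)\in X(0,T)$ with $N(t)\le\delta$, for a small constant $\delta$ to be fixed, obeys the bound in \eqref{priori estimate} on $[0,T]$; since the right-hand side there is $\le CN^2(0)$ and $N(0)$ may be taken $\le\delta_0\ll\delta$, the standard continuation principle yields a unique global solution in $X(0,\infty)$ satisfying \eqref{priori estimate}.

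The core is the basic weighted $L^2$ estimate. I would multiply the first equation of \eqref{phipsi} by $w_1\phi$ and the second by $\chi w_2\psi$, add, and integrate over $\mathbb{R}$, using the identities $s+\chi V=\tfrac{\chi}{s}(u_--U)>0$, $Uw_1=w_2$ and $U^{p-1}w_1=w_5$ that follow from \eqref{UV}, \eqref{s^2} and \eqref{weight function}. The principal part of the diffusion term then produces the dissipation $-p\|\phi_z\|_{w_5}^2$; the convection term $(s+\chi V)\phi_z$ produces $-\tfrac12\int[(s+\chi V)w_1]_z\phi^2$; the two cross terms collapse into $\chi\int w_2(\phi\psi)_z=-\chi\int w_2'\phi\psi$ (the cancellation discovered in \cite{Nonlinearstability-1}); and there remains a commutator $-p\int U^{p-1}w_1'\phi\phi_z$. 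Since $U_z<0$ one verifies $[(s+\chi V)w_1]_z>0$ and $w_2'\ge0$, so the convection term — and, when $\alpha>0$, the corresponding term $-\tfrac{\chi s}{2}\int w_2'\psi^2$ from the second equation — is dissipative. The delicate point is that the positive part of the commutator and the convection dissipation are both of size $\int U^{-\alpha-p}\phi^2$ near $z=+\infty$, and a Hardy/Poincaré inequality (available because $\phi(\pm\infty,t)=0$) bounds $\int U^{-\alpha-p}\phi^2$ by $C\|\phi_z\|_{w_5}^2$; a short computation shows the constants close precisely when $\alpha<1$, which is exactly what the definition of $\alpha$ in \eqref{weight function} enforces for every $0<p<1$. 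To handle the region $z\to-\infty$, where the convection dissipation degenerates exponentially, I would replace $w_1,w_2$ in this estimate by a piecewise-smooth, norm-equivalent modification that is flattened for $z$ near $-\infty$. Finally the remaining terms — $-\chi\int w_2'\phi\psi$, the quadratic $\chi\phi_z\psi_z$, and $F_z$ with $F=(U+\phi_z)^p-U^p-pU^{p-1}\phi_z$, for which $|F|\le CU^{p-2}\phi_z^2$ holds pointwise once $|\phi_z|\le U/2$ (guaranteed by the weighted Sobolev embedding and $N(t)\le\delta$) — are absorbed into the dissipation and the energy using Cauchy--Schwarz and the smallness of $N(t)$.

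The higher-order estimates for $\phi_z\in H^1_{w_3}$ and $\psi_z\in H^1_{w_4}$ are obtained by applying the same procedure to the once- and twice-differentiated system; the diffusion term supplies the dissipations $\|\phi_{zz}\|_{w_6}^2$ and $\|\phi_{zzz}\|_{w_6}^2$ (recall $U^{p-1}w_3=w_6$). The unweighted dissipation $\|\psi_z\|_1^2$ in \eqref{priori estimate} is not seen by these estimates and must be produced separately: testing the first equation of \eqref{phipsi} against $-w_4\psi_z$ makes the good term $-\chi\int w_4U\psi_z^2=-\chi\|\psi_z\|^2$ appear, and integrating by parts in $t$ together with the substitution $\psi_{zt}=s\psi_{zz}+\phi_{zz}$ converts the time-derivative term into quantities controlled by the already-available dissipations; the analogous manipulation one order higher supplies $\|\psi_{zz}\|^2$. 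The principal obstacle — the reason these steps are not routine — is that in the $H^1$ and $H^2$ estimates the singular weights $w_3=U^{-2}$, $w_4=U^{-1}$ blow up at $z=+\infty$ while $U^{p-1}$ also blows up there, so the integrations by parts generate boundary terms at $+\infty$ that are not a priori zero. I would resolve this by replacing each weight $w_i$ by a regularized weight $w_i^{(\eta)}$ that equals $w_i$ on $\{U\ge\eta\}$ and is a bounded constant on $\{U<\eta\}$ (smoothly joined): with a bounded weight the boundary terms vanish, and all the estimates go through with constants uniform in $\eta\in(0,1]$. Since $w_i^{(\eta)}\uparrow w_i$ pointwise as $\eta\to0^+$, letting $\eta\to0$ and applying Fatou's lemma promotes these to the asserted estimates with the genuine singular weights.

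To conclude, I would form a linear combination of the basic, higher-order, and auxiliary $\psi$-estimates with suitably small relative coefficients, absorb all non-dissipative and nonlinear contributions using $N(t)\le\delta$ and the weighted Sobolev embeddings, and integrate in $t$ to obtain \eqref{priori estimate}; together with the local theory and the continuation principle this produces the unique global solution $(\phi,\psi)\in X(0,\infty)$. The two points that will require genuine care in the full proof are the algebraic balance in the basic estimate that forces the choice of $\alpha$, and the regularized-weight/Fatou device that legitimizes the integrations by parts in the $H^1$ and $H^2$ estimates.
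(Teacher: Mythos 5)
Your overall architecture is the same as the paper's — antiderivative perturbation, weighted $L^2/H^1/H^2$ energy estimates with weights built out of powers of $U$, a regularization of the weight to justify integration by parts at $z=+\infty$, and Fatou's lemma to pass to the limit — and your account of the regularization/Fatou step, the cancellation $\chi\int w_2(\phi\psi)_z=-\chi\int w_2'\phi\psi$, and the nonlinear term $F$ via Taylor expansion all match the paper's Lemmas~\ref{taylor}--\ref{H2}. However, two of the specific mechanisms you propose for closing the estimates are either not how the paper proceeds or would actually fail, and one of them is a genuine gap.

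First, the paper does not use a Hardy/Poincar\'e inequality to control the $\phi^2$ contribution of the commutator $-p\int U^{p-1}(w_1)_z\phi\phi_z$. Instead, after splitting the commutator by Cauchy--Schwarz with a free parameter $\eta$, the paper forms the pointwise quantity $z_\eta(U)$ in \eqref{eq45} and shows that choosing $\eta=(\alpha+1)/2$ makes the $U^{p-3}U_z^2$-scale parts of the convection dissipation and the commutator cancel exactly, leaving $z_\eta(U)=-\frac{\chi}{s}U_z>0$ uniformly on $\mathbb{R}$; the diffusion dissipation then survives with coefficient $1-\eta=(1-\alpha)/2>0$, which is where the requirement $\alpha<1$ really comes from. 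Your Hardy route would require that the Hardy constant (with weight $z^{(\alpha+p)/(1-p)}$ near $+\infty$) beat the commutator constant, which you assert but do not compute; and more importantly you omit the second constraint $\alpha\geq 2p-1$, which is needed in \eqref{eq4} to control the nonlinear flux $F$, and which together with $\alpha<1$ is what forces the two-case definition of $\alpha$. Also, the flattening of $w_1,w_2$ near $z=-\infty$ is unnecessary: the positive definiteness of the matrix $B$ in \eqref{eq8} together with $z_\eta(U)>0$ lets the paper simply discard the convection contributions, so no equivalence of norms near $-\infty$ is ever invoked.

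Second, and this is the genuine gap, your proposal to produce the unweighted dissipation $\|\psi_z\|^2$ by testing the first equation against $-w_4\psi_z=-U^{-1}\psi_z$ does not close. After the substitution $\psi_{zt}=s\psi_{zz}+\phi_{zz}$ and integration by parts, the time-derivative term generates $\int_0^t\int U^{-1}\phi_z\psi_z$ and $\int_0^t\int U^{-2}U_z\,\phi\psi_z$. Any splitting of the first of these by Young's inequality that leaves the $\psi_z$-factor at the level $\int\psi_z^2$ produces a companion $\int_0^t\int U^{-2}\phi_z^2$, and since $p>\alpha$ for both branches of the definition of $\alpha$, one has $U^{-2}>U^{p-\alpha-2}$ for small $U$, so this term is \emph{not} controlled by the available dissipation $\int_0^t\|\phi_z\|_{w_5}^2$; if instead you split it the other way you get $\int_0^t\int U^{\alpha-p}\psi_z^2$, which is more singular than what you are trying to prove. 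The second term produces $\int_0^t\int U^{-2p}\phi^2$, for which there is no time-integrated dissipation at all. The paper avoids this by testing against the unweighted $\psi_z$ (equations \eqref{eq46}--\eqref{eq47}), which yields $\int_0^t\int U\psi_z^2$ with controllable remainders, and then recovers the full $\int_0^t\int\psi_z^2$ by combining two different dissipations piecewise in $z$: $\int\frac{|U_z|}{U^2}\psi_z^2$ (strong for $z\geq0$ because $|U_z|/U^2\geq c>0$ there) and $\int U\psi_z^2$ (strong for $z\leq0$ because $U\geq U(0)>0$ there), see \eqref{eq48}--\eqref{4.50}. This piecewise combination is the ``piecewise smooth weight function'' the paper alludes to in its introduction; it is an essential step that your single-multiplier shortcut would skip, and the shortcut does not work.

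A minor remark: the paper's regularization $U_\epsilon=U+\epsilon$ is slightly different from your truncated weight $w_i^{(\eta)}$; both kill the boundary terms at $+\infty$ and are monotone in the regularization parameter, so Fatou applies equally well to either, and this part of your plan is sound.
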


The local well-posedness of the system \eqref{phipsi}-\eqref{phi0psi0} is standard (see Ref. \cite{local-existence} for instance). To prove Proposition \ref{phipsi stability}, we only need to establish the following \textit{a priori} estimate.

\begin{proposition}[\emph{A priori} estimate] \label{proposition priori estimate}
	Assume that $(\phi,\psi)\in X(0,T)$ is a solution of \eqref{phipsi}-\eqref{phi0psi0} for a positive constant $T$. Then there exists a positive constant $\delta_1$, independent of $T$, such that if
	\begin{equation*}
		N(t)\leq \delta_1 \ \text{ for all }	t\in[0,T],
	\end{equation*}
	then the estimate \eqref{priori estimate} holds for all $t\in [0,T]$.
\end{proposition}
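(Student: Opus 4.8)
Under the assumption $N(t)\le\delta_1$ on $[0,T]$, the goal is to bound the left-hand side of \eqref{priori estimate} by $CN(0)^2$; I would obtain this from a hierarchy of weighted $L^2$-, $H^1$- and $H^2$-estimates for \eqref{phipsi}, using throughout the structure from Section \ref{existence}: $0<U<u_-$, $U_z<0$, $|U_z|\le CU^{2-p}$, $V=-U/s$, and $s+\chi V\in(0,s)$ with $(s+\chi V)_z>0$. A preliminary step is a pointwise bound on the perturbation: since $\sqrt{w_3}\,\phi_z=\phi_z/U$ and, by $|U_z|\le CU^{2-p}$, $\|\partial_z(\phi_z/U)\|\le\|\phi_{zz}/U\|+C\|\phi_z/U\|\le C\|\phi_z\|_{1,w_3}\le CN(t)$, the one-dimensional Sobolev inequality gives $\|\phi_z/U\|_{L^\infty}\le CN(t)$, so $|\phi_z|\le\tfrac12 U$ once $\delta_1$ is small; an analogous bound gives $\|\psi_z\|_{L^\infty}\le CN(t)$. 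This makes the Taylor expansion of $F=(U+\phi_z)^p-U^p-pU^{p-1}\phi_z$ legitimate and yields $|F|\le CU^{p-2}\phi_z^2$ and $|F_z|\le C\bigl(U^{p-2}|\phi_z\phi_{zz}|+U^{p-3}|U_z|\phi_z^2\bigr)$, which is exactly what is needed to dominate the nonlinear terms by the dissipation.

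\textbf{Basic $L^2$-estimate.} Multiply the first equation of \eqref{phipsi} by $w_1\phi$ and the second by $\chi U w_1\psi$ (note $\chi U w_1=\chi w_2$) and add. The time derivatives give $\tfrac12\tfrac{d}{dt}\bigl(\|\phi\|_{w_1}^2+\chi\|\psi\|_{w_2}^2\bigr)$; by the cancellation structure of Li--Wang \cite{Nonlinearstability-1} the cross terms $\chi U w_1\phi\psi_z$ and $\chi U w_1\psi\phi_z$ combine into the exact derivative $\chi U w_1(\phi\psi)_z$, which after one integration by parts is lower order. The diffusion term produces, on integrating by parts, the dissipation $-p\|\phi_z\|_{w_5}^2$ (since $U^{p-1}w_1=w_5$) plus an indefinite term $-p\int U^{p-1}w_1'\phi_z\phi$, while $(s+\chi V)\phi_z\cdot w_1\phi$ contributes $-\tfrac12\int[(s+\chi V)w_1]_z\phi^2\le0$ because $(s+\chi V)w_1$ is increasing --- a zeroth-order dissipation one uses to absorb the indefinite term. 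The genuine subtlety here is that, to keep the remaining quadratic form in $(\phi,\psi)$ non-negative on all of $\mathbb{R}$, the weight must be a \emph{piecewise-smooth} modification of $w_1$ (constant on a left half-line where $U$ is bounded away from $0$, equal to $w_1$ near $+\infty$, matched by a smooth cutoff). The nonlinear terms $\chi\phi_z\psi_z$ and $F_z$ are absorbed into the dissipation with a factor $N(t)\le\delta_1$ via the pointwise bounds; integrating in $t$ yields the $L^2$-part of \eqref{priori estimate}.

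\textbf{Higher-order estimates and the boundary-term obstacle.} Differentiating \eqref{phipsi} once and twice in $z$ and testing the $\phi$-equations against $w_3\phi_z$ and $w_3\phi_{zz}$ (using $U^{p-1}w_3=w_6$) produces $\tfrac{d}{dt}$ of $\|\phi_z\|_{w_3}^2$ and $\|\phi_{zz}\|_{w_3}^2$ together with the dissipations $\|\phi_{zz}\|_{w_6}^2$ and $\|\phi_{zzz}\|_{w_6}^2$; the dissipation $\int_0^t\|\psi_z\|_1^2$ is obtained through a Kawashima-type cross term between the two equations that uses $-U_z>0$, and the weighted control of $\psi$ in $H^1_{w_4}$ follows from the transport equation $\psi_t=s\psi_z+\phi_z$ once $\phi_z$ is controlled. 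The crux is that $w_3=U^{-2}$ and $w_6=U^{p-3}$ blow up as $z\to+\infty$, so the integrations by parts in the diffusion terms a priori leave boundary contributions at $+\infty$ that are not manifestly zero. To overcome this I would, for a small parameter $\varepsilon>0$, replace the singular weights by bounded approximations obtained by substituting $U+\varepsilon$ for $U$ in their denominators; with these all boundary terms vanish and every integration by parts is rigorous. Then I would run the estimates with the regularized weights, verifying that all constants are independent of $\varepsilon$ (the extra terms from differentiating the regularized weights either have the same favorable sign as their $\varepsilon=0$ counterparts or are dominated by the dissipation), and finally let $\varepsilon\to0^+$, using Fatou's lemma on the dissipation integrals and monotone convergence on the energy to recover the estimates with $w_3$ and $w_6$.

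\textbf{Conclusion.} Adding the $L^2$-, $H^1$- and $H^2$-level inequalities and choosing $\delta_1$ small enough to absorb every nonlinear remainder --- those from $\chi\phi_z\psi_z$, $F_z$, and the weight derivatives --- into the left-hand dissipation $\|\phi_z\|_{w_5}^2+\|\phi_{zz}\|_{1,w_6}^2+\|\psi_z\|_1^2$ yields \eqref{priori estimate}. I expect the hardest parts to be (a) the regularization-and-Fatou argument and the bookkeeping showing that the $\varepsilon$-dependent boundary and error terms are harmless, and (b) the precise tracking of the powers of $U$ so that each term generated by $F_z$ and by the weight derivatives is controlled, with the correct power of $U$, by the available dissipation; constructing the piecewise weight in the $L^2$-estimate is the remaining delicate point.
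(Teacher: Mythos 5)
Your outline matches the paper's architecture closely: weighted $L^2$/$H^1$/$H^2$ energy estimates with multipliers $w_1\phi$ and $\chi w_2\psi$ at the base level (and you correctly identify the Li--Wang cancellation $\chi w_2(\phi\psi)_z$), Taylor-expansion control of $F$, $F_z$, $F_{zz}$ via the pointwise bound $\|\phi_z/U\|_{L^\infty}\le CN(t)$, regularization $U\mapsto U+\varepsilon$ of the singular weights at the $H^1$/$H^2$ levels to kill boundary terms in the integrations by parts, and Fatou to pass $\varepsilon\to 0^+$. That is exactly what Lemmas \ref{taylor}--\ref{H2} do.

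There is, however, one concrete misidentification of where the subtlety in the $L^2$ estimate lies. You propose that the remaining quadratic form in $(\phi,\psi)$ is made non-negative by replacing $w_1$ with a piecewise-smooth cutoff weight in $z$ (constant on a left half-line, equal to $w_1$ near $+\infty$). That is not what the paper does, and it is risky: the cancellation you invoke relies on the algebraic relation $U\,w_1 = w_2$ between the two multipliers, and an ad hoc $z$-cutoff of $w_1$ would break it unless $w_2$ is cut off compatibly. What the paper actually does is keep $w_1 = U^{-(\alpha+1)}$ smooth in $z$ and choose the \emph{exponent} $\alpha$ piecewise in $p$: after the $\eta$-weighted Cauchy--Schwarz split of the indefinite term $-p\int U^{p-1}(w_1)_z\phi\phi_z$, the surviving coefficient of $\phi^2$ is $z_\eta(U)$, which vanishes up to the good term $-\frac{\chi}{s}U_z$ precisely when $1-\frac{\alpha+1}{2\eta}=0$; then the residual quadratic form $\phi^2+2s\alpha\phi\psi+s^2\alpha\psi^2$ is positive semidefinite iff $0\le\alpha\le 1$, while the nonlinear absorption in \eqref{eq4} also needs $\alpha\ge 2p-1$. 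The compatible choice is $\alpha=\max(0,2p-1)$, and that is the ``piecewise'' choice the abstract refers to. The place where a genuine split in $z$ does appear is later, in Lemma \ref{H1}, when bounding the $\psi_z$ dissipation from below: you only get $\int \tfrac{|U_z|}{U^2}\psi_z^2$ from the $H^1$ cross term, which degenerates as $z\to-\infty$, and you need a supplementary estimate for $\int U\psi_z^2$ obtained by testing the first equation of \eqref{phipsi} against $\psi_z$; combining the two on $z\ge 0$ and $z<0$ respectively yields $\int\psi_z^2$. Your ``Kawashima-type cross term'' remark gestures at the first piece but omits the second, which is indispensable because $|U_z|/U^2\to 0$ as $z\to-\infty$.

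Apart from this point --- misplacing the piecewise mechanism from the exponent $\alpha(p)$ to a $z$-cutoff of $w_1$, and not flagging the $\int U\psi_z^2$ estimate needed to close the $\psi_z$ dissipation --- your proposal is correct and tracks the paper's proof.
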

Before establishing the \textit{a priori} estimate in Proposition \ref{proposition priori estimate}, we first present some preliminary calculations.

\begin{lemma}
	Let $U(z)$ be the traveling wave obtained in Theorem \ref{existence of TW}. Denote $w(U)\triangleq U^{-\beta}$ for some positive constant $\beta$, then it holds
	\begin{equation}\label{weighted sobolev inequ}
		\sup_{z \in \mathbb{R}}|\sqrt{w(U)}f|^2\leq C\|f\|^2_{1,w}, \ \forall f\in H_w^1(\mathbb{R}),
	\end{equation}
	where $C$ is a positive constant  independent of $f$.
\end{lemma}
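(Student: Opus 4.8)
The plan is to prove this as a standard weighted Gagliardo--Nirenberg--type estimate, exploiting the monotonicity of $U(z)$ and the precise decay/growth rates recorded in Theorem \ref{existence of TW}. First I would reduce to a pointwise identity: for fixed $z$, write
\[
w(U(z))\,f(z)^2=\int_{-\infty}^z\frac{d}{d\xi}\bigl(w(U(\xi))f(\xi)^2\bigr)\,d\xi
=\int_{-\infty}^z\Bigl(w'(U)U_\xi f^2+2w(U)ff_\xi\Bigr)\,d\xi,
\]
which is legitimate because $w(U(z))f(z)^2\to 0$ as $z\to-\infty$ (since $f\in H_w^1$ forces $\sqrt{w}f\in L^2$ and $\sqrt{w}f_z\in L^2$, hence $\sqrt{w}f$ is, up to a bounded error, absolutely continuous with an $L^1$ derivative of its square — the usual one-dimensional Sobolev argument). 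The second term is immediately bounded by $2\|f\|_w\|f_z\|_w\le\|f\|_{1,w}^2$ after Cauchy--Schwarz. The first term is where the structure of the wave enters.

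For the first term, since $w(U)=U^{-\beta}$ we have $w'(U)U_\xi=-\beta U^{-\beta-1}U_\xi$, so I must control $\int_{\mathbb R}|U^{-\beta-1}U_\xi|\,f^2\,d\xi$ by $\|f\|_{1,w}^2=\int U^{-\beta}(f^2+f_\xi^2)\,d\xi$. It therefore suffices to show that $|U^{-1}U_\xi|=|(\ln U)_\xi|$ is uniformly bounded on $\mathbb R$; then $|U^{-\beta-1}U_\xi|\le C\,U^{-\beta}$ pointwise and the desired bound follows at once. This uniform bound is exactly what \eqref{|Uz|estimate} gives for $z\to+\infty$: $|U_z|\le CU^{2-p}$ forces $|U^{-1}U_z|\le CU^{1-p}\to 0$ since $U\to 0$ there. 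For $z\to-\infty$, Theorem \ref{existence of TW} gives $u_--U\sim C\mathrm e^{cz}$ with $c>0$, hence $|U_z|\sim C\mathrm e^{cz}\to 0$ while $U\to u_->0$, so again $|U^{-1}U_z|\to 0$. On any compact set $U$ is bounded below by a positive constant and $U_z$ is continuous, so $(\ln U)_z$ is continuous; combining the three regimes yields $\sup_{z\in\mathbb R}|(\ln U)_z|<\infty$.

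Putting the pieces together: for every $z$,
\[
w(U(z))f(z)^2\le \beta\sup_{\mathbb R}|(\ln U)_z|\int_{\mathbb R}U^{-\beta}f^2\,d\xi+\|f\|_{1,w}^2\le C\|f\|_{1,w}^2,
\]
and taking the supremum over $z\in\mathbb R$ finishes the proof, with $C$ depending only on $\beta$ and the wave $U$ (not on $f$). The only genuinely substantive point — and the one I would single out as the crux — is the uniform boundedness of $(\ln U)_z$; everything else is the routine one-dimensional Sobolev embedding argument adapted to the weight. Note this is precisely the reason the weights in \eqref{weight function} are taken as powers of $U$: it is this logarithmic-derivative bound that makes the weighted embedding behave like the unweighted one.
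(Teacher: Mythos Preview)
Your proof is correct and takes essentially the same approach as the paper: both arguments reduce the weighted Sobolev inequality to the uniform boundedness of $(\ln U)_z$, which follows from $|U_z|\le CU^{2-p}$ (the paper's estimate \eqref{|Uz|estimate}) together with $0<U<u_-$. The only cosmetic difference is that the paper first shows $\sqrt{w(U)}f\in H^1$ with $\|\sqrt{w(U)}f\|_1\le C\|f\|_{1,w}$ and then invokes the standard Sobolev embedding, whereas you unpack that embedding by integrating $(w(U)f^2)_\xi$ directly; the substantive content is identical.
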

\begin{proof}
	By \eqref{|Uz|estimate}, noting $0<p<1$ and $U\in(0,u_-)$, it holds that for any $f\in H_w^1(\mathbb{R})$
	\begin{equation}\nonumber
		\begin{aligned}
			\int |(\sqrt{w(U)})_z|^2f ^2=\frac{\beta^2}{4}\int \frac{U_z^2}{U^{2}} \cdot w(U)f^2 \leq C\int w(U)f^2.
		\end{aligned}
	\end{equation}
	Thus,
		\begin{align}\|\sqrt{w(U)}f\|_1^2&\leq\int w(U)f^2+2\int w(U)f_z^2+2\int |(\sqrt{w(U)})_z|^2f^2\nonumber\\&\leq C\left(\int w(U)f^2+\int w(U)f_z^2\right)\nonumber\\&\leq C\|f\|^2_{1,w}.\label{eq32}
	\end{align}
	According to the Sobolev inequality, we have
	$$
	\sup_{z \in \mathbb{R}}|\sqrt{w(U)}f|^2\leq C\|\sqrt{w(U)}f\|_1^2.
	$$
	This along with \eqref{eq32} gives rise to \eqref{weighted sobolev inequ}.
\end{proof}

By the weighted Sobolev inequality \eqref{weighted sobolev inequ}, it is easy to see that
\begin{equation}\label{sobolev embedding}
	\sup_{\tau\in [0,t]}\left\{\left\|\sqrt{w_1}\phi(\cdot,\tau)\right\|_{L^{\infty}},
	\left\|\sqrt{w_3}\phi_z(\cdot,\tau)\right\|_{L^{\infty}}, \left\|\sqrt{w_4}\psi_z(\cdot,\tau)\right\|_{L^{\infty}}\right\} \leq CN(t).
\end{equation}

\begin{lemma}\label{taylor}
	Let the assumptions of Proposition \ref{proposition priori estimate} hold. If $N(T)\ll1$, then it holds that
	\begin{equation}\label{F}
		|F|\leq C U^{p-2}\phi_{z}^2,
	\end{equation}	
	\begin{equation}\label{Fz}
		|F_z|\leq C\left( U^{p-2} |\phi_{z}||\phi_{zz}|+U^{p-3}|U_z|\phi_{z}^2\right),
	\end{equation}
	\begin{equation}\label{Fzz}
		|F_{zz}| \leq C\left[U^{p-2} \phi_{z z}^2+\left(U^{p-4}U_z^2+\frac{|U_z|}{U}\right)\phi_z^2 +U^{p-3} |U_z| |\phi_z| |\phi_{z z}|+U^{p-2} |\phi_z| |\phi_{z z z}|\right].
	\end{equation}

\end{lemma}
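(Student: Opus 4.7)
The core idea is a second-order Taylor expansion of the function $\xi\mapsto\xi^p$ around $\xi=U$, evaluated at $\xi=U+\phi_z$. The key preliminary observation is that the smallness hypothesis $N(T)\ll 1$, combined with the weighted Sobolev embedding \eqref{sobolev embedding}, yields the pointwise bound
\begin{equation*}
|\phi_z(z,t)|\le C\sqrt{w_3}^{\,-1}N(T)=CU(z)\,N(T),
\end{equation*}
so that for $N(T)$ sufficiently small, $U+s\phi_z\ge\tfrac12 U$ for every $s\in[0,1]$. Consequently, any intermediate value appearing in a Taylor remainder for $\xi^{p-k}$ ($k=0,1,2,\dots$) is comparable to $U$, and since $p-k<0$ for $k\ge 1$, bounds of the form $|\theta^{p-k}|\le CU^{p-k}$ will be available throughout.

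For \eqref{F}, I would write $F$ in integral remainder form,
\begin{equation*}
F=p(p-1)\phi_z^{2}\int_0^1(1-s)(U+s\phi_z)^{p-2}\,ds,
\end{equation*}
and apply the lower bound on $U+s\phi_z$ to dominate the integrand by $CU^{p-2}$, giving $|F|\le CU^{p-2}\phi_z^{2}$. For \eqref{Fz}, I would differentiate $F$ directly and regroup, collecting the coefficients of $\phi_{zz}$ and $U_z$ separately:
\begin{equation*}
F_z=p\bigl[(U+\phi_z)^{p-1}-U^{p-1}\bigr]\phi_{zz}+p\bigl[(U+\phi_z)^{p-1}-U^{p-1}-(p-1)U^{p-2}\phi_z\bigr]U_z.
\end{equation*}
The first bracket is a first-order Taylor remainder of $\xi^{p-1}$ and is bounded by $CU^{p-2}|\phi_z|$; the second bracket is a second-order Taylor remainder of $\xi^{p-1}$ and is bounded by $CU^{p-3}\phi_z^{2}$. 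Substituting gives \eqref{Fz}.

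For \eqref{Fzz} the plan is to differentiate the expression for $F_z$ just derived, again grouping by the ``highest order'' derivative in each summand. This produces four kinds of contributions: (i) coefficient of $\phi_{zzz}$, controlled by the first-order Taylor remainder of $\xi^{p-1}$, yielding the term $CU^{p-2}|\phi_z||\phi_{zzz}|$; (ii) coefficient of $\phi_{zz}^{2}$ and the cross product $\phi_{zz}\cdot(\cdot)$, using a first-order Taylor remainder of $\xi^{p-2}$, producing $CU^{p-2}\phi_{zz}^{2}$ together with $CU^{p-3}|U_z||\phi_z||\phi_{zz}|$; (iii) pure $U_z^{2}\phi_z^{2}$ contributions from differentiating the second bracket above and invoking a second-order Taylor remainder of $\xi^{p-2}$, yielding $CU^{p-4}U_z^{2}\phi_z^{2}$; and (iv) a $U_{zz}$ contribution obtained by bounding $|U_{zz}|$ via the ODE \eqref{3.2}, which gives $|U_{zz}|\le C U^{1-p}|U_z|$, and combining with the second-order Taylor bound on the corresponding bracket to produce the $\tfrac{|U_z|}{U}\phi_z^{2}$ term.

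The main obstacle will be bookkeeping for $F_{zz}$: one has to match each monomial in $\phi_z,\phi_{zz},\phi_{zzz},U_z,U_{zz}$ with the appropriate order of Taylor remainder (zeroth, first, or second) in order to extract the right power of $U$ rather than a crude bound that blows up too fast as $U\to 0^{+}$. Using the integral form of the remainder makes this systematic, and exploiting the ODE \eqref{3.2} to control $U_{zz}$ is essential for achieving the precise form of the $\tfrac{|U_z|}{U}\phi_z^{2}$ term in \eqref{Fzz}. Once the grouping is complete, each bound is a direct consequence of $U+s\phi_z\asymp U$ and the explicit Taylor remainder formulas, so no further ingredients beyond Taylor's theorem and the smallness of $N(T)$ are required.
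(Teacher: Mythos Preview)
Your approach is essentially identical to the paper's: both compute $F_z$ and $F_{zz}$ directly, group terms by which derivative of $\phi$ they multiply, and control each bracket by a Taylor remainder of the appropriate order using $\|\phi_z/U\|_{L^\infty}\le CN(t)\ll 1$, with the ODE \eqref{3.2} invoked to handle $U_{zz}$.

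One small arithmetic slip to fix in item (iv): your crude bound $|U_{zz}|\le CU^{1-p}|U_z|$, multiplied against the second-order remainder $CU^{p-3}\phi_z^{2}$, yields $C\dfrac{|U_z|}{U^{2}}\phi_z^{2}$, not $C\dfrac{|U_z|}{U}\phi_z^{2}$. The paper avoids this by inserting the exact identity $U_{zz}=\dfrac{\chi}{sp}U^{2-p}U_z+(2-p)\dfrac{U_z^{2}}{U}$ from \eqref{3.2} and treating the two pieces separately: the first piece gives $U^{p-3}\cdot U^{2-p}|U_z|\,\phi_z^{2}=\dfrac{|U_z|}{U}\phi_z^{2}$, while the second piece gives $U^{p-4}U_z^{2}\phi_z^{2}$ and is absorbed into the term already present. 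Alternatively, your $\dfrac{|U_z|}{U^{2}}\phi_z^{2}$ can be split after the fact via $\dfrac{|U_z|}{U^{2}}=\dfrac{\chi}{sp}\dfrac{u_{-}-U}{U^{p}}\le C\Big(\dfrac{|U_z|}{U}+U^{p-4}U_z^{2}\Big)$ using \eqref{3.2}; either route closes the estimate.
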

\begin{proof}
	A direct calculation yields
	\begin{equation}\nonumber
		\begin{aligned}
			F_z&=p\left(U+\phi_{z}\right)^{p-1}\left( U_z+\phi_{zz}\right)-pU^{p-1}U_z-p(p-1)U^{p-2}U_z\phi_{z}-pU^{p-1}\phi_{z z}\\
			&=p\left[\left(U+\phi_{z}\right)^{p-1}-U^{p-1} \right]\phi_{z z}+p\left[\left(U+\phi_{z}\right)^{p-1}-U^{p-1}-(p-1)U^{p-2}\phi_{z} \right]U_z.\\	
		\end{aligned}
	\end{equation}	
	Owing to \eqref{sobolev embedding}, we get $\left\|\phi_z/U\right\|_{L^{\infty}}\leq CN(t)$. Therefore, \eqref{F} and \eqref{Fz} can be immediately obtained from the Taylor's expansion. Similarly, by \eqref{3.2}, one can derive
	\begin{equation}\nonumber
		\begin{aligned}
			F_{zz}&= p(p\!-\!1)\left(U\!+\!\phi_z\right)^{p\!-\!2} \left( U_z\!+\!\phi_{z z}\right)^2\!+\!p\left(U\!+\!\phi_z\right)^{p-1}\left(U_{z z}\!+\!\phi_{z z z}\right)\!-\!p(p-1) U^{p-2} U_z^2\\&\quad -p U^{p-1} U_{z z}-p(p-1)(p-2) U^{p-3} U_z^2 \phi_z-p(p-1) U^{p-2} U_{z z} \phi_z\\&\quad-p(p-1) U^{p-2} U_z \phi_{z z}  -p(p-1) U^{p-2} U_z \phi_{z z}-p U^{p-1} \phi_{z z z} \\
			&= p(p\!-\!1)\left(U\!+\!\phi_z\right)^{p\!-\!2} \phi_{z z}^2\!+\!p(p\!-\!1)\left[\left(U\!+\!\phi_z\right)^{p\!-2}\!-\!U^{p\!-\!2}\!-\!(p\!-\!2) U^{p\!-\!3} \phi_z\right] U_z^2 \\
			&\quad +\!p\left[\left(U\!+\!\phi_z\right)^{p-1}\!-\!U^{p-1}\!-\!(p\!-\!1) U^{p-2} \phi_z\right] U_{z z}\!+\!p\left[\left(U+\phi_z\right)^{p-1}-U^{p-1}\right] \phi_{z z z} \\
			&\quad + 2 p(p-1)\left[\left(U+\phi_z\right)^{p-2}-U^{p-2}\right] U_z \phi_{z z}\\
			&=p(p\!-\!1)\left(U\!+\!\phi_z\right)^{p\!-\!2} \phi_{z z}^2\!+\!p(p\!-\!1)\left[\left(U\!+\!\phi_z\right)^{p\!-\!2}\!-\!U^{p-2}\!-\!(\!p-\!2) U^{p-3} \phi_z\right] U_z^2 \\
			&\quad +p\left[\left(U+\phi_z\right)^{p-1}-U^{p-1}-(p-1) U^{p-2} \phi_z\right]\left[\frac{\chi}{sp}U^{2-p}U_z+(2-p)\frac{{U_z}^2}{U} \right]\\
			&\quad +p\left[\left(U+\phi_z\right)^{p-1}-U^{p-1}\right] \phi_{z z z} + 2 p(p-1)\left[\left(U+\phi_z\right)^{p-2}-U^{p-2}\right] U_z \phi_{z z},
		\end{aligned}
	\end{equation}	
	which implies
	\begin{equation}\nonumber
		\begin{aligned}
			\!|F_{zz}\!|\!\leq\! C \left| U^{\!p-\!2} \phi_{z z}^2\!+\!U^{p\!-\!4}U_z^2\phi_z^2\! +\!U^{p\!-\!3}(U^{2\!-\!p}U_z\!+\!\frac{U_z^2}{U}) \phi_z^2\!+\!U^{p\!-\!2} \phi_z \phi_{z z z}\!+\!U^{p\!-\!3} U_z \phi_z \phi_{z z}\right|.
		\end{aligned}
	\end{equation}
	Hence \eqref{Fzz} holds and we complete the proof of Lemma \ref{taylor}.
\end{proof}

We now derive the basic $L^2$ estimate of $(\phi,\psi)$.
\begin{lemma}\label{L2}
	Let the assumptions of Proposition \ref{proposition priori estimate} hold. If $N(T)\ll1$, then there exists a constant $C>0$ independent of $T$ such that
	\begin{equation}\label{L2 estimate}
		\|\phi\|_{w_1}^2+\|\psi\|_{w_2}^2+\int_0^t\left\|\phi_z(\tau)\right\|_{w_5}^2  \leq C\left(\|\phi_0\|_{w_1}^2+\|\psi_0\|_{w_2}^2\right) +C N(t)\int_0^t \int U^{1-p}\psi_z^2,
	\end{equation}
	for any $t\in [0,T]$.
\end{lemma}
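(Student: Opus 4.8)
The plan is to perform a weighted $L^2$ energy estimate: multiply the first equation of \eqref{phipsi} by $w_1\phi = U^{-(\alpha+1)}\phi$ and the second by $w_2\psi = U^{-\alpha}\psi$, integrate over $\mathbb{R}$, and add the results. Integrating by parts in the diffusion term $p(U^{p-1}\phi_z)_z$ against $w_1\phi$ produces the good term $-p\int U^{p-1}w_1\phi_z^2 = -p\int U^{p-\alpha-2}\phi_z^2 = -p\int w_5 \phi_z^2$, which after time integration gives the dissipation $\int_0^t\|\phi_z\|_{w_5}^2$ on the left, plus a commutator term $-p\int U^{p-1}(w_1)_z\phi\phi_z$ that must be controlled. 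The transport terms $(s+\chi V)\phi_z$ and $s\psi_z$ integrate by parts against the weighted unknowns to yield, modulo lower-order pieces, a negative-definite boundary-like contribution; the crucial point is that the weights $w_1=U^{-(\alpha+1)}$, $w_2=U^{-\alpha}$ and the relations $s=-\chi v_-$, $V(z)=-U(z)/s$ from \eqref{s^2}, \eqref{UV} are chosen precisely so that the cross terms $\chi\int U\psi_z\,w_1\phi$ and $\int \phi_z\,w_2\psi$, together with the weight-derivative terms coming from $(s+\chi V)$ and $s$, combine into a perfect (sign-definite, up to $\int U^{p-\alpha-2}\phi_z^2$ absorbable remainder) quadratic form. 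This is the cancellation structure inherited from Li--Wang \cite{Nonlinearstability-1}, now carried through the $U$-power weights; verifying that the coefficient in front of the leftover $\int U^{p-\alpha-2}\phi_z^2$ after completing the square is strictly smaller than $p$ (so the diffusion term dominates) is where the specific value $\alpha = \max\{2p-1,0\}$ enters.

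Next I would dispose of the nonlinear and source terms. The term $\chi\int\phi_z\psi_z\,w_1\phi$ is cubic; using the $L^\infty$ bound \eqref{sobolev embedding}, namely $\|\sqrt{w_1}\phi\|_{L^\infty}\le CN(t)$, and $\|\sqrt{w_3}\phi_z\|_{L^\infty}\le CN(t)$, it is bounded by $CN(t)$ times $\int U^{\alpha+1}U^{2}U^{-(\alpha+1)}\cdots$—more carefully, one balances the $U$-powers so it is absorbed into $\varepsilon\int w_5\phi_z^2 + CN(t)\int U^{1-p}\psi_z^2$, which explains the last term on the right of \eqref{L2 estimate}. The term $\int F_z\,w_1\phi$ is handled by integrating by parts to move the derivative onto $w_1\phi$ and then invoking the bound $|F|\le CU^{p-2}\phi_z^2$ from \eqref{F} in Lemma \ref{taylor}; the resulting integrals are again cubic in the smallness parameter $N(t)$ and get absorbed. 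Throughout, one uses $|U_z|\le CU^{2-p}$ from \eqref{|Uz|estimate} to convert every $U_z/U$ factor arising from differentiating the weights into a harmless power $U^{1-p}$, keeping all remainder integrals subcritical relative to $w_5 = U^{p-\alpha-2}$.

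The main obstacle is the piecewise choice of $\alpha$ and the attendant need, flagged in the introduction, to use a \emph{piecewise smooth} weight in the basic $L^2$ estimate. When $\tfrac12<p<1$ the naive weight $U^{-1}$ for $\phi$ does not make the quadratic form sign-definite near $z=+\infty$ because the singular diffusivity $U^{p-1}$ degenerates too slowly; one must strengthen the weight to $U^{-(2p-1)-1}=U^{-2p}$ there, while near $z=-\infty$ (where $U\to u_->0$ and all weights are comparable to constants) a milder behavior suffices, so the global weight is patched together and one must check that the patching introduces no bad boundary term at the junction—e.g. by choosing the junction at a level set of $U$ and verifying the one-sided limits of the weight derivative have the favorable sign, or by mollifying and passing to the limit. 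Once the quadratic form is shown to be dominated by the $\int w_5\phi_z^2$ dissipation with a strictly positive margin, integrating in time and applying Gronwall (the $N(t)\int_0^t\int U^{1-p}\psi_z^2$ term is kept on the right, to be closed later when the $\psi_z$ dissipation $\int_0^t\|\psi_z\|_1^2$ is produced in a subsequent lemma) yields \eqref{L2 estimate}.
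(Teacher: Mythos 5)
Your first two paragraphs track the paper's actual argument reasonably closely: multiply the first equation of \eqref{phipsi} by $U^{-(\alpha+1)}\phi$ and the second by $\chi\,U^{-\alpha}\psi$ (note the factor $\chi$, which you drop; without it the cross terms $\chi\int U\psi_z\cdot U^{-(\alpha+1)}\phi$ and $\int\phi_z\cdot U^{-\alpha}\psi$ do \emph{not} combine into $\chi\int U^{-\alpha}(\phi\psi)_z$, which is exactly what integrates by parts into the benign $-\chi\int (U^{-\alpha})_z\phi\psi$ term). The diffusion term does supply $p\int U^{p-\alpha-2}\phi_z^2$, and absorbing the commutator by Cauchy--Schwarz with parameter $\eta$ requires the coefficient condition you mention. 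Two points you pass over, though: the choice $\alpha=2p-1$ for $p>\tfrac12$ is forced not only by the sign considerations in the quadratic form but by the nonlinear estimate of $\int U^{-(\alpha+1)}F_z\phi$ — after integrating by parts and using \eqref{F} together with $\|\sqrt{w_1}\phi\|_{L^\infty}\leq CN(t)$ and $|U_z|\leq CU^{2-p}$, one needs $U^{(\alpha+1-2p)/2}\leq C$, i.e.\ $\alpha\geq 2p-1$; and the residual weight-derivative terms assemble into the $2\times2$ quadratic form $\phi^2+2s\alpha\phi\psi+s^2\alpha\psi^2$, whose positive (semi)definiteness for $\alpha\in[0,1)$ has to be checked explicitly.

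Your third paragraph, however, sets up a spurious obstacle. The paper's weight in the $L^2$ step is $w_1=U^{-(\alpha+1)}$ \emph{globally} — a single power of $U$, hence a smooth function of $z$; there is no junction, no patching, and no mollification needed in Lemma \ref{L2}. (The introduction's phrase ``piecewise smooth weight function'' refers to the piecewise-in-$p$ formula for $\alpha$, not to a weight stitched together at some $z^*$.) Since $U(-\infty)=u_->0$, the power-law weight is automatically comparable to a constant near $z=-\infty$, so there is nothing to ``weaken'' there. Worse, a weight equal to $U^{-2p}$ for $z>z^*$ and $U^{-1}$ for $z<z^*$ would have a jump in its derivative at $z^*$ and would \emph{create} the boundary terms you are trying to avoid, rather than eliminate them. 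The mollification-and-Fatou device you gesture at is indeed used in the paper, but in the $H^1$ and $H^2$ estimates (Lemmas \ref{H1} and \ref{H2}, where $U$ in the denominator is replaced by $U_\epsilon=U+\epsilon$ and one passes $\epsilon\to 0^+$), not in the basic $L^2$ estimate.
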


\begin{proof}
	Denote by $w(U)$ a smooth positive weight function. Multiplying the first equation of \eqref{phipsi} by $\frac{w(U)}{U}\phi$ and the second one by $\chi w(U)\psi$, adding them and integrating by parts, noting that
	$$
	\frac{w(U)}{U}\left( U^{p-1}\phi_z\right)_z\phi=\left( U^{p-1}\frac{w(U)}{U}\phi\phi_{z}\right) _z-U^{p-1}\left(\frac{w(U)}{U} \right)_z\phi\phi_{z}-U^{p-1}\frac{w(U)}{U}\phi^2_{z},
	$$
	and
	$$
	\frac{w(U)\left(s\!+\!\chi V\right)}{U}\phi \phi_z\!=\!\left(\frac{ w(U)\left(s\!+\!\chi V\right)}{2U}\phi^2\right)_z\!-\!\frac{\left(s\!+\!\chi V\right)_z}{2}\frac{w(U)}{U}\phi^2\!-\!\frac{\left(s\!+\!\chi V\right)}{2}\left(\frac{w(U)}{U}\right)_z\phi^2,
	$$	
	we obtain
	\begin{equation}\label{eq2}
		\begin{aligned}
			&\frac{1}{2}\frac{\mathrm{d}}{\mathrm{d}t}\int\left(\frac{w(U)}{U}\phi^2+\chi w(U)\psi^{2} \right)+p\int U^{p-2} w(U) \phi^2_{z}\\&+\frac{1}{2}\int\left[\left(s+\chi V\right)_z\frac{w(U)}{U}+\left(s+\chi V\right)\left(\frac{w(U)}{U}\right)_z \right] \phi^2+\frac{s\chi}{2}\int w(U)_z\psi^{2}\\&+\chi\int w(U)_z\phi\psi=\chi\int\frac{w(U)}{U}\phi\phi_{z}\psi_z
			+\int\frac{w(U)}{U}F_z\phi-p\int U^{p-1}\left(\frac{w(U)}{U}\right)_z\phi\phi_{z}.
		\end{aligned}	
	\end{equation}
	By the Cauchy-Schwarz inequality, the last term on the right hand side of \eqref{eq2} satisfies
	$$
	p\left|\int U^{p-1}\left(\frac{w(U)}{U}\right)_z\phi\phi_{z}\right|\leq \eta p\int U^{p-2} w(U) \phi_{z}^2+\frac{p}{4\eta}\int\left[\left(\frac{w(U)}{U}\right)_z \right]^2\frac{ U^{p}}{w(U)}\phi^2,
	$$
	where $\eta$ is a positive constant to be determined later. It thus follows from \eqref{eq2} that
		\begin{align}
			&\frac{1}{2}\frac{\mathrm{d}}{\mathrm{d}t}\int\left(\frac{w(U)}{U}\phi^2+\chi w(U)\psi^{2} \right)+(1-\eta)p\int U^{p-2} w(U) \phi^2_{z}\nonumber\\&+\frac{1}{2}\int z_\eta(U)\frac{w(U)}{U}\phi^2+\frac{s\chi}{2}\int w(U)_z\psi^{2}+\chi\int w(U)_z\phi\psi\nonumber\\&\leq\chi\int\frac{w(U)}{U}\phi\phi_{z}\psi_z
			+\int\frac{w(U)}{U}F_z\phi,\label{eq3}
		\end{align}	
	where
	\begin{equation*}
		z_\eta(U)\triangleq\left(s+\chi V\right)_z+\left(s+\chi V\right)\left(\frac{w(U)}{U}\right)_z \frac{U}{w(U)}-\frac{p}{2\eta}\left[\left(\frac{w(U)}{U}\right)_z \right]^2 \frac{U^{p+1}}{w^2(U)}.
	\end{equation*}
	
	We next determine the weight function $w(U)>0$ and the constant $\eta\in(0,1)$ so that $z_\eta(U)\geq0$ and $w(U)_z\geq0$ for all $U\in(0,u_-)$. To ensure $w(U)_z\geq0$, given that $U_z<0$ according to \eqref{Uz}, we may take
	\begin{equation}\label{w(U)}
		w(U)=U^{-\alpha}, \quad \forall \ U\in (0,u_-),
	\end{equation}
	where $\alpha\geq0$ is a constant to be determined later. Given this, by \eqref{UV}-\eqref{3.2}, one can see that
		\begin{align}
			z_\eta(U)&=-\frac{\chi}{s}U_z+\left(s-\frac{\chi}{s}U\right)\left( U^{-\alpha-1}\right)_zU^{1+\alpha}-\frac{p}{2\eta}\left[\left( U^{-\alpha-1}\right)_z \right]^2U^{p+1+2\alpha}\nonumber\\
			&=-\frac{\chi}{s}U_z-\frac{\alpha+1}{s}\frac{U_z}{U}(s^2-\chi U)-\frac{p(\alpha+1)^2}{2\eta}U^{p-3}U_z^2\nonumber\\&=-\frac{\chi}{s}U_z+p(\alpha+1)U^{p-3}U_z^2-\frac{p(\alpha+1)^2}{2\eta}U^{p-3}U_z^2\nonumber\\&=-\frac{\chi}{s}U_z+(1-\frac{\alpha+1}{2\eta})p(\alpha+1)U^{p-3}U_z^2.\label{eq45}
		\end{align}
Now we take
	\begin{equation}\label{alpha}
		\alpha=\begin{cases}
			2p-1,\quad&{\rm if } \ \frac{1}{2}<p<1,\\
			0,\quad &{\rm if } \ 0<p\leq \frac{1}{2},
		\end{cases}	\text{ and } \eta=\begin{cases}
			p,\quad&{\rm if } \ \frac{1}{2}<p<1,\\
			\frac{1}{2},\quad &{\rm if } \ 0<p\leq \frac{1}{2},
		\end{cases}
	\end{equation}
	so that $1-\frac{\alpha+1}{2\eta}=0$. Therefore, by virtue of $s>0$, \eqref{Uz} and \eqref{eq45}, we get
	\begin{equation}\label{eq5}
		z_\eta(U)=-\frac{\chi}{s}U_z>0,\quad\forall \ U \in (0,u_-).
	\end{equation}

	On the other hand, utilizing \eqref{F}, we can estimate the last term on the right-hand side of \eqref{eq3} after integration by parts as
		\begin{align}
			\int\frac{w(U)}{U}F_z\phi&=-\int F\left[U^{-\alpha-1}\phi_{z}+\left(U^{-\alpha-1}\right)_{z}\phi \right]\nonumber\\
			&\leq C\left[\int U^{p-\alpha-3}|\phi_z|^3+\int U^{p-\alpha-4}\left| U_z\right| |\phi|\phi_z^2\right].\label{eq6}
		\end{align}
	Thanks to \eqref{|Uz|estimate} and \eqref{w(U)}, along  with the fact that $\|\phi/U^{\frac{\alpha+1}{2}}\|_{L^{\infty}}\leq CN(t)$, it holds that
		\begin{align}
			\int U^{p-\alpha-4}\left| U_z\right| |\phi|\phi_z^2&\leq CN(t)\int U^{p-\alpha-4}U^{\frac{\alpha+1}{2}}|U_z|	\phi_{z}^2\nonumber\\&\leq CN(t)p\int U^{\frac{\alpha+1-2p}{2}} U^{p-\alpha-2}\phi_{z}^2\nonumber\\&\leq	CN(t)p\int  U^{p-\alpha-2}\phi_{z}^2,\label{eq4}\end{align}
	due to $\alpha+1-2p\geq0$.
	Now inserting \eqref{eq5}-\eqref{eq4} into \eqref{eq3} yields
		\begin{align}
			&\frac{1}{2}\frac{\mathrm{d}}{\mathrm{d}t}\int\left(\frac{\phi^2 }{U^{\alpha+1}}+\chi\frac{\psi^{2}}{U^{\alpha}}  \right)+(1-\eta-CN(t))p\int U^{p-\alpha-2}\phi^2_{z}\nonumber\\&-\frac{\chi}{2s}\int U^{-\alpha-1}U_z\phi^2-\chi\alpha\int U^{-\alpha-1}U_z\phi\psi-\frac{s\chi\alpha}{2}\int U^{-\alpha-1}U_z\psi^{2}\nonumber\\&\leq\chi\int U^{-\alpha-1}\phi\phi_{z}\psi_z+C\int U^{p-\alpha-3} |\phi_{z}|^{3},\label{eq7}
		\end{align}	
	where
		\begin{align}
			&\quad-\frac{\chi}{2s}\int U^{-\alpha-1}U_z\phi^2-\chi\alpha\int U^{-\alpha-1}U_z\phi\psi-\frac{s\chi\alpha}{2}\int U^{-\alpha-1}U_z\psi^{2}\nonumber\\
			&=-\frac{\chi}{2s}\int \left(\phi^2+2s\alpha \phi\psi+s^2\alpha\psi^{2} \right)U^{-\alpha-1}U_z.\label{positive definite}
		\end{align}
	We claim that
		\begin{align}
			g(\phi,\psi)&\triangleq\phi^2+2s\alpha \phi+s^2\alpha\psi^{2}\nonumber\\
			&=\begin{bmatrix}
				\phi&\psi
			\end{bmatrix}\begin{bmatrix}
				1&s\alpha\\s\alpha&s^2\alpha
			\end{bmatrix}\begin{bmatrix}
				\phi\\\psi
			\end{bmatrix}\nonumber\\
			&\triangleq Y^\top B Y> 0.\label{eq8}
		\end{align}
	In fact, when $\frac{1}{2}<p<1$, we have $0<\alpha<1$ due to \eqref{alpha}. And then the two eigenvalues of $B$ satisfies $0<\lambda_1<\lambda_2$.
	Therefore, $B$ is positive definite and the claim is true for $\frac{1}{2}<p<1$. When $0<p\leq\frac{1}{2}$, thanks to \eqref{alpha}, we have from \eqref{eq8} that $g(\phi,\psi)=\phi^2>0$ for any $ \phi\neq 0$. Hence in any case, \eqref{eq8} is true, which along with \eqref{positive definite} gives rise to
	\begin{equation}\label{eq9}
		-\frac{\chi}{2s}\int U^{-\alpha-1}U_z\phi^2-\chi\alpha\int U^{-\alpha-1}U_z\phi\psi-\frac{s\chi\alpha}{2}\int U^{-\alpha-1}U_z\psi^{2}> 0.
	\end{equation}
	Substituting \eqref{eq9} into \eqref{eq7}, we then arrive at
		\begin{align}
			&\frac{1}{2}\frac{\mathrm{d}}{\mathrm{d}t}\int\left(\frac{\phi^2 }{U^{\alpha+1}}+\chi\frac{\psi^{2}}{U^{\alpha}}  \right)+(1-\eta-CN(t))p\int U^{p-\alpha-2}\phi^2_{z}\nonumber\\&\leq\chi\int U^{-\alpha-1}\phi\phi_{z}\psi_z+C\int U^{p-\alpha-3} |\phi_{z}|^{3}.\label{eq10}
		\end{align}	
	Noting $\|\phi/U^{\frac{\alpha+1}{2}}\|_{L^{\infty}}\leq CN(t)$, by the Cauchy-Schwarz inequality, we have
		\begin{align}
			\chi\int U^{-\alpha-1}\phi\phi_{z}\psi_z&\leq CN(t)\int U^{-\alpha-1} U^{\frac{\alpha+1}{2}}|\phi_{z}||\psi_z|\nonumber\\&\leq CN(t)p\int U^{p-\alpha-2}\phi_{z}^2 +CN(t)\int U^{1-p}\psi_{z}^2.\label{eq11}
		\end{align}
	And the last term on the right hand side of \eqref{eq10} can be estimated as
	\begin{equation}\label{eq12}
		\int U^{p-\alpha-3} |\phi_{z}|^{3}=\int U^{-1} |\phi_{z}| U^{p-\alpha-2} \phi_{z}^{2}\leq  CN(t)p\int U^{p-\alpha-2} \phi_{z}^{2},	
	\end{equation}
	where we have used $\|\phi_{z}/U\|_{L^\infty}\leq CN(t)$. Combining \eqref{eq11} and \eqref{eq12}, after choosing $N(t)$ suitably small, we thus update \eqref{eq10} as
	\begin{equation}\label{eq13}
		\frac{\mathrm{d}}{\mathrm{d}t}\int\left(\frac{\phi^2 }{U^{\alpha+1}}+\frac{\psi^{2}}{U^{\alpha}}  \right)+\int U^{p-\alpha-2}\phi^2_{z}\leq CN(t)\int U^{1-p}\psi_{z}^2.
	\end{equation}
	Then integrating \eqref{eq13} over $(0,t)$ gives the desired estimate \eqref{L2 estimate}.
\end{proof}

The next lemma gives the $H^1$ estimate of $(\phi,\psi)$. An issue in the weighted energy estimates for higher order derivatives is that the singularity caused by fast diffusion may generate non-zero boundary terms during the integration by parts. To break down this barrier, we develop an approximate procedure that avoids the boundary terms after integration by parts, and obtain the desired weighted estimates by employing the Fatou's Lemma.
\begin{lemma}\label{H1}
	Let the assumptions of Proposition \ref{proposition priori estimate} hold. If $N(T)\ll1$, then it holds
		\begin{equation}\begin{split}			&\|\phi_z\|_{w_3}^2+\|\psi_z\|_{w_4}^2+\int_0^t\left(\left\|\phi_{zz}(\tau)\right\|_{w_6}^2+\left\|\psi_{z}(\tau)\right\|^2 \right)\\& \leq C\left(\|\phi_0\|_{w_1}^2+\|\psi_0\|_{w_2}^2+\|\phi_{0z}\|_{w_3}^2
			+\|\psi_{0z}\|_{w_4}^2\right).\label{H1 estimate}
\end{split}		\end{equation}
	
\end{lemma}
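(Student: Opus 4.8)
\textbf{Proof proposal for Lemma \ref{H1}.} The plan is to run a weighted energy estimate on the perturbation system \eqref{phipsi}, differentiating both equations in $z$ and testing the first one against $w_3\phi_z=U^{-2}\phi_z$ and the second against $\chi w_4\psi_z=\chi U^{-1}\psi_z$, then adding a sufficiently large multiple of the basic $L^2$ estimate \eqref{L2 estimate} so that all lower-order and coupling terms get absorbed. The tools I would use are: the ODE description of the wave (\eqref{3.2} and \eqref{|Uz|estimate}), which fixes the exact decay rates and the signs of $U_z,V_z$; the weighted Sobolev inequality \eqref{weighted sobolev inequ} in the form \eqref{sobolev embedding}, which provides the $L^\infty$ bounds on $\sqrt{w_i}$-weighted perturbations with a gain $N(t)$; the Taylor estimates of Lemma \ref{taylor} for $F_z,F_{zz}$; and Lemma \ref{L2}, whose dissipation term $\int_0^t\|\phi_z\|_{w_5}^2$ is exactly what is needed to control the residual $\phi_z^2$-terms produced below.

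For the $\phi$-component, differentiating the first equation of \eqref{phipsi} in $z$ and multiplying by $U^{-2}\phi_z$ gives $\tfrac12\tfrac{d}{dt}\|\phi_z\|_{w_3}^2$ plus, from the fast-diffusion term, the good dissipation $-p\|\phi_{zz}\|_{w_6}^2$; here the choice $w_3=U^{-2}$ is dictated by the identity $U^{p-1}w_3=U^{p-3}=w_6$, so the leading term comes out clean. The remaining terms generated by the diffusion (after one further integration by parts in the $\phi_z\phi_{zz}$ crossings) and by $(s+\chi V)\phi_z$ are, using \eqref{3.2} and \eqref{|Uz|estimate}, all bounded by $C\,U^{-p}\phi_z^2$ or $C\,U^{-p-1}\phi_z^2$; in every case $0<p<1$ forces these weights to be $\le C\,U^{p-\alpha-2}=C\,w_5$ (the borderline case being $p=\tfrac12$), so their time integrals are controlled by $\int_0^t\|\phi_z\|_{w_5}^2$ from Lemma \ref{L2}. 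The genuinely nonlinear contributions — the term $\chi\phi_z\psi_z$ and the error $F_z,F_{zz}$ from $F=(U+\phi_z)^p-U^p-pU^{p-1}\phi_z$ — are handled by Lemma \ref{taylor} together with \eqref{sobolev embedding}, and therefore come with a factor $N(t)$ multiplying the dissipations.

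The delicate point is the coupling and the extraction of the $\|\psi_z\|^2$ dissipation. The coefficient $\chi$ placed on the multiplier $w_4\psi_z$ for the differentiated second equation is chosen precisely so that the two worst cross terms $\mp\chi\int U^{-1}\phi_{zz}\psi_z$ — one coming from integrating $\chi\int U^{-1}\phi_z\psi_{zz}$ by parts in the $\phi$-equation, the other appearing directly from $\psi_{zt}=s\psi_{zz}+\phi_{zz}$ in the $\psi$-equation — cancel exactly; this is the weighted, fast-diffusion analogue of the cancellation discovered by Li and Wang \cite{Nonlinearstability-1}. What survives from $s\int U^{-1}\psi_z\psi_{zz}$ is $\tfrac{\chi s}{2}\int (U^{-1})_z\psi_z^2$, and since \eqref{3.2} and $s^2=\chi u_-$ give $(U^{-1})_z=\tfrac{\chi}{sp}U^{-p}(u_--U)>0$ with $U^{-p}$ bounded below, this controls $\int_{z\ge0}\psi_z^2$; the remaining $\int_{z<0}\psi_z^2$ is recovered from the coupling term $\chi U\psi_z$ in the first equation (note $s+\chi V\to0$ as $z\to-\infty$), by additionally testing the (undifferentiated) first equation of \eqref{phipsi} against $\chi U^{-1}\psi_z$, integrating the $\int\phi_t U^{-1}\psi_z$ term by parts in $t$ — the boundary-in-time term being dominated by $\epsilon\|\psi_z\|_{w_4}^2$ plus the $L^2$ energy $\|\phi\|_{w_1}^2$, and the interior term reducing to $\int U^{-1}\phi_z^2\le\|\phi_z\|_{w_5}^2$ and to $\phi_{zz}\psi_z$ crossings absorbed by $\|\phi_{zz}\|_{w_6}^2$.

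The main obstacle — and the new ingredient — is that the weights $w_3=U^{-2}$ and $w_6=U^{p-3}$ blow up as $z\to+\infty$ where $U\to0$, so every integration by parts above a priori produces nonzero (indeed possibly divergent) boundary terms at $+\infty$. To circumvent this I would, as announced in the introduction, replace $w_3$ by a regular approximate weight $w_3^\varepsilon$ that is bounded and coincides with $U^{-2}$ away from $+\infty$ (for instance $w_3^\varepsilon=(U+\varepsilon)^{-2}$, inducing correspondingly regularized versions of $w_4,w_5,w_6$); then $w_3^\varepsilon$ and its $z$-derivative are bounded, so all boundary terms at $\pm\infty$ vanish and the computations above go through verbatim with constants independent of $\varepsilon$. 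Combining the resulting differential inequality with a large multiple of \eqref{L2 estimate}, choosing $N(t)$ small to absorb the $N(t)$-terms — in particular using $U^{1-p}\le u_-^{1-p}$ so that the term $CN(t)\int_0^t\!\int U^{1-p}\psi_z^2$ coming from Lemma \ref{L2} is swallowed by the $\int_0^t\|\psi_z\|^2$ dissipation — and integrating in $t$ yields the $\varepsilon$-uniform bound; letting $\varepsilon\to0$ and invoking Fatou's Lemma for the weighted norms of $\phi_z$, $\psi_z$ and $\phi_{zz}$ then gives \eqref{H1 estimate}. I expect the bookkeeping of boundary terms under the regularization, and the verification that the cancellation still operates in the presence of the weight $U^{-1}$ and the nonlinear diffusion coefficient, to be the technically heaviest parts.
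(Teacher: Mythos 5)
Your overall skeleton matches the paper's: differentiate \eqref{phipsi} in $z$, test with the $w_3$- and $\chi w_4$-weighted multipliers, regularize the weights so that integration by parts is legitimate (the paper uses $\tfrac{1}{UU_\epsilon}$ with $U_\epsilon=U+\epsilon$ rather than your $(U+\epsilon)^{-2}$, but these are interchangeable), extract the dissipation $-p\|\phi_{zz}\|_{w_6}^2$ and $\tfrac{s\chi}{2}\int\tfrac{|U_z|}{U_\epsilon^2}\psi_z^2$, absorb the $\phi_z$-residuals through Lemma \ref{L2}, pass to the limit by Fatou, and then make a second, unweighted test of the undifferentiated first equation to capture the $\psi_z$-dissipation near $z=-\infty$. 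You also correctly identify that $\tfrac{|U_z|}{U^2}=\tfrac{\chi}{sp}U^{-p}(u_--U)$ is bounded below only on $z\geq 0$, which is precisely the paper's observation leading to the spatial splitting.

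However, your proposed second estimate has a genuine gap. You test the undifferentiated first equation of \eqref{phipsi} against $\chi U^{-1}\psi_z$, aiming to produce $\chi^2\int\psi_z^2$ directly from the coupling term $\chi U\psi_z$. But the diffusion term $p(U^{p-1}\phi_z)_z$ then generates (after expanding the derivative) the crossing
\begin{equation*}
\chi p\int U^{p-2}\,\psi_z\,\phi_{zz},
\end{equation*}
and this cannot be absorbed into the available dissipations: Young's inequality gives $\eta\int\psi_z^2 + C\eta^{-1}\int U^{2p-4}\phi_{zz}^2$, and $U^{2p-4}/U^{p-3}=U^{p-1}\to\infty$ as $z\to+\infty$ since $0<p<1$, so $\int U^{2p-4}\phi_{zz}^2$ is strictly stronger than $\|\phi_{zz}\|_{w_6}^2$. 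Integrating the diffusion term by parts instead transfers the derivative onto $\psi_z$ and produces $\int U^{p-2}\psi_{zz}\phi_z$, which is equally unusable because $\psi_{zz}$ is not yet controlled (that is the content of Lemma \ref{H2}). The paper sidesteps this by testing against $\psi_z$ with no weight at all: the product with $\chi U\psi_z$ yields $\chi\int U\psi_z^2$ rather than $\chi^2\int\psi_z^2$, and the diffusion crossing becomes $p\int U^{p-1}\phi_{zz}\psi_z\le\eta\int U\psi_z^2+C\int U^{2p-3}\phi_{zz}^2$, which \emph{is} absorbable since $U^{2p-3}\le CU^{p-3}=Cw_6$ for $0<p<1$. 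The resulting dissipation $\int_0^t\int U\psi_z^2$ is weak near $z=+\infty$, but that is exactly where $\int_0^t\int\tfrac{|U_z|}{U^2}\psi_z^2$ from the weighted estimate is strong; combining the two via the splitting $\mathbb{R}=(-\infty,0)\cup[0,\infty)$ and the monotonicity of $U$ (see \eqref{eq48}--\eqref{4.50}) yields the full $\int_0^t\|\psi_z\|^2$. Your extra $U^{-1}$ weight on the multiplier is precisely what destroys this balance, so this step needs to be replaced by the paper's unweighted test.

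A minor point: the cross terms do not ``cancel exactly''; in the paper they combine as $\chi\int\tfrac{(\phi_z\psi_z)_z}{U_\epsilon}=\chi\int\tfrac{U_z}{U_\epsilon^2}\phi_z\psi_z$, leaving a residual that must still be estimated (cf.~\eqref{4.41}). Your phrasing gets the mechanism right but overstates the outcome.
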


\begin{proof}
	Differentiating \eqref{phipsi} with respect to $z$ yields
	\begin{equation}\label{phizpsiz}
		\begin{cases}
			\phi_{z t}=&\left[pU^{p-1}\phi_{zz}+p(p-1)U^{p-2}U_z\phi_{z}\right]_z+\left( s+\chi V\right)\phi_{z z}+\chi V_z \phi_z\\&+\chi U_z \psi_z+\chi U \psi_{z z}+\chi\left(\phi_z \psi_z\right)_z+F_{zz}, \\
			\psi_{z t}=&s \psi_{z z}+\phi_{z z}.
		\end{cases}
	\end{equation}
	Denote $U_\epsilon\triangleq U+\epsilon$, where $\epsilon>0$ is a constant. Multiplying the first equation of \eqref{phizpsiz} by $\frac{\phi_{z}}{UU_\epsilon}$ and the second one by $\chi\frac{\psi_{z}}{U_\epsilon}$, integrating the resultant equations with respect to $z$, noting
	\begin{equation}\nonumber
		\begin{aligned}
			&\quad\int \left[pU^{p-1}\phi_{zz}+p(p-1)U^{p-2}U_z\phi_{z}\right]_z\frac{\phi_z}{UU_\epsilon}\\
			&=-p\int \frac{U^{p-2} }{U_\epsilon}\phi_{z z}^2+p\left(2-p\right)\int \frac{U^{p-3}U_z}{U_\epsilon} \phi_{z}\phi_{zz}+p\int\frac{U^{p-2}{U}_z}{U_\epsilon^2}\phi_{z}\phi_{zz}\\
			&\quad+p(p-1)\int\left(\frac{U^{p-4}U_z^2 }{U_\epsilon} +\frac{U^{p-3}{U_z^2} }{U_\epsilon^2}\right) \phi_{z}^2,
		\end{aligned}
	\end{equation}
	and
	\begin{equation}\nonumber
		\chi\int\frac{\phi_{z}\psi_{z z}}{U_\epsilon}+\chi\int\frac{\psi_{z}\phi_{z z}}{U_\epsilon}=\chi\int\frac{\left(\phi_{z}\psi_{z} \right)_z}{U_\epsilon}=\chi\int\frac{{U}_z}{U_\epsilon^2}\phi_{z}\psi_{z},
	\end{equation}
	we get
		\begin{align}
			&\frac{1}{2}\frac{\mathrm{d}}{\mathrm{d}t}\int\left(\frac{\phi_z^2}{UU_\epsilon}+\chi \frac{\psi_z^2}{U_\epsilon} \right)+p\int \frac{U^{p-2} }{U_\epsilon}\phi_{zz}^2-\frac{s\chi}{2}\int\frac{{U}_z}{U_\epsilon^2}\psi_z^2
			\nonumber\\&+p(1-p)\int \left(\frac{U^{p-4}U_z^2 }{U_\epsilon}+\frac{U^{p-3}|U_z|^2 }{U_\epsilon^2}\right) \phi_{z}^2\nonumber\\
			&=\int \left[p(2-p)\frac{U^{p-3}U_z}{U_\epsilon}+p \frac{U^{p-2}{U}_z}{U_\epsilon^2}+\frac{s+\chi V}{UU_\epsilon}\right] \phi_{z}\phi_{zz}+\chi\int\frac{V_z}{UU_\epsilon}\phi_{z}^2\nonumber \\
			&+\chi\int\left(\frac{{U}_z}{U_\epsilon^2}
			+\frac{U_z}{UU_\epsilon}\right)\phi_{z}\psi_{z}+\chi\int\frac{\left(\phi_{z}\psi_{z} \right)_z\phi_z}{UU_\epsilon}+\int \frac{F_{zz}\phi_z}{UU_\epsilon}.\label{eq14}
		\end{align}
	Next, we estimate the terms on the right hand side of \eqref{eq14}. Noting $\frac{1}{U_\epsilon}\leq \frac{1}{U}$ and $|s+\chi V|\leq C$, by \eqref{|Uz|estimate} and the Cauchy-Schwarz inequality, we deduce that
		\begin{align}
			&\quad \int \left[p(2-p)\frac{U^{p-3}U_z}{U_\epsilon}+p \frac{U^{p-2}{U}_z}{U_\epsilon^2}+\frac{s+\chi V}{UU_\epsilon}\right] \phi_{z}\phi_{zz}\nonumber\\&\leq C\left(\int \frac{|\phi_{z}||\phi_{z z}|}{UU_\epsilon}+\int \frac{|\phi_{z}||\phi_{z z}|}{U_\epsilon^2} \right)\nonumber\\&\leq \frac{p}{4}\int \frac{U^{p-2} }{U_\epsilon}\phi_{zz}^2+C\left(\int\frac{\phi_{z}^2 }{U^pU_\epsilon}+\int \frac{U^{2-p} }{U_\epsilon^3}\phi_{z}^2 \right)\nonumber\\&\leq\frac{p}{4}\int \frac{U^{p-2} }{U_\epsilon}\phi_{zz}^2+C\int\frac{\phi_{z}^2 }{U^{p+1}}.\label{eq15}
		\end{align}	
	Similarly, by the second equation of \eqref{travleing wave equ}, we have
	\begin{equation}\label{4.40}\chi\left|\int\frac{V_z}{UU_\epsilon}\phi_{z}^2\right|= \frac{\chi}{s}\left|\int\frac{U_z}{UU_\epsilon}\phi_{z}^2\right|\leq C\int \frac{\phi_{z}^2}{U^p},
	\end{equation}
	and
		\begin{align}
			\chi\int\left(\frac{{U}_z}{U_\epsilon^2}			+\frac{U_z}{UU_\epsilon}\right)\phi_{z}\psi_{z}&\leq\frac{s\chi}{4}\int\frac{|{U}_z|}{U_\epsilon^2}\psi_{z}^2+C\left(\int\frac{|{U}_z|}{U_\epsilon^2}\phi_{z}^2
+\int\frac{|U_z|}{U^2}\phi_{z}^2 \right)\nonumber\\&\leq\frac{s\chi}{4}\int\frac{|{U}_z|}{U_\epsilon^2}\psi_{z}^2+C\int \frac{\phi_{z}^2}{U^p}.\label{4.41}
		\end{align}
	In view of \eqref{|Uz|estimate} and the fact that $\|\phi_{z}/U\|_{L^\infty}\leq CN(t)$, by integration by parts, we get
	\begin{equation}\label{4.42}
		\begin{aligned}
			\!\chi\!\int\frac{\left(\phi_{z}\psi_{z} \right)_z\phi_z}{UU_\epsilon}&=-\chi\int \frac{\phi_z\phi_{zz}\psi_{z}}{UU_\epsilon}+\chi\int\frac{U_z}{U^2U_\epsilon}\phi_z^2\psi_{z}+\chi\int\frac{{U} _z}{U_\epsilon^2U}\phi_z^2\psi_{z}\\&\leq\! CN(t)\!\int\left(\frac{|\phi_{zz}||\psi_{z}|}{U_\epsilon}
			\!+\frac{|U_z|}{UU_\epsilon}|\phi_z||\psi_{z}|
			\!+\frac{|{U}_z|}{U_\epsilon^2}|\phi_z||\psi_{z}|\right)\\&\!\leq\!CN(t)\int \left(\frac{U^{p\!-\!2}}{U_\epsilon}\phi_{zz}^2
			\!+\frac{|{U}_z|}{U_\epsilon^2}\psi_{z}^2\!+U^{1\!-\!p}\psi_z^2\right)\!+\!C\int\frac{\phi_{z}^2}{U^p},
		\end{aligned}
	\end{equation}
	and by \eqref{Fz}, the last term on the right hand side of \eqref{eq14} can be estimated as
		\begin{align}
			\int \frac{F_{zz} \phi_z}{UU_\epsilon}&=-	\int F_{z}\left(\frac{\phi_{zz}}{UU_\epsilon}-\frac{U_z\phi_{z}}{U^2U_\epsilon}-\frac{{U}_z\phi_{z}}{U_\epsilon^2U}\right)\nonumber\\&\leq C\left[\int \frac{U^{p-3} }{U_\epsilon} |\phi_{z}|\phi_{zz}^2+\int\left(\frac{U^{p-4}|U_z| }{U_\epsilon}+\frac{U^{p-3}|{U}_z| }{U_\epsilon^2}\right)  \phi_{z}^2|\phi_{zz}|\right.\nonumber\\&\quad\left.+\int\left(\frac{U^{p-5}U_z^2 }{U_\epsilon}+\frac{U^{p-4}|U_z|^2 }{U_\epsilon^2}\right)  |\phi_{z}|^3\right]\nonumber\\&\leq CN(t)p\int\frac{U^{p-2}}{U_\epsilon} \phi_{zz}^2
			+CN(t)\int\frac{{\phi_{z}^2}}{U^{p+1}}.\label{eq17}
		\end{align}
	Substituting \eqref{eq15}--\eqref{eq17} into \eqref{eq14} and integrating the equation with respect to $t$, noting $\frac{1}{U^{p}}\leq CU^{p-\alpha-2}$, $\frac{1}{U^{p+1}}\leq CU^{p-\alpha-2}$ and $U_z<0$, we have
		\begin{align}
			&\int\left(\frac{\phi_z^2}{UU_\epsilon}+ \frac{\psi_z^2}{U_\epsilon} \right)+\int_{0}^{t}\int\left( \frac{U^{p-2}}{U_\epsilon}\phi_{zz}^2+\frac{|{U}_z|}{U_\epsilon^2}\psi_z^2\right) \nonumber\\&\leq C\int\left(\frac{\phi_{0z}^2}{U^2}+ \frac{\psi_{0z}^2}{U}+\frac{\phi_{0}^2 }{U^{\alpha+1}}+\frac{\psi_{0}^2 }{U^{\alpha}} \right)+CN(t)\int_{0}^{t}\int U^{1-p}\psi_z^2\nonumber,	 \end{align}
	provided that $N(t)$ is small enough. Moreover, it follows from the Fatou's Lemma that
		\begin{align}
			&\quad\int\left(\frac{\phi_z^2}{U^2}+ \frac{\psi_z^2}{U} \right)+\int_{0}^{t}\int\left( U^{p-3} \phi_{zz}^2+\frac{|U_z|}{U^2}\psi_z^2\right)\nonumber\\
			&\leq\varliminf_{\epsilon\rightarrow0^{+} } \left[\int\left(\frac{\phi_z^2}{UU_\epsilon}+ \frac{\psi_z^2}{U_\epsilon} \right)+\int_{0}^{t}\int \left(\frac{U^{p-2}}{U_\epsilon}\phi_{zz}^2
			+\frac{|{U}_z|}{U_\epsilon^2}\psi_z^2\right) \right]\nonumber\\&\leq C\int\left(\frac{\phi_{0z}^2}{U^2}+ \frac{\psi_{0z}^2}{U}+\frac{\phi_{0}^2 }{U^{\alpha+1}}+\frac{\psi_{0}^2 }{U^{\alpha}} \right)+CN(t)\int_{0}^{t}\int U^{1-p}\psi_z^2.\label{eq18}	
		\end{align}
Next, we claim
	\begin{equation}\label{Upsiz}
		\int_{0}^{t}\int U\psi_{z}^2\leq C\left(\|\phi_0\|_{w_1}^2+\|\psi_0\|_{w_2}^2+\|\phi_{0z}\|_{w_3}^2
		+\|\psi_{0z}\|_{w_4}^2+N(t)\int_{0}^{t}\int U^{1-p}\psi_{z}^2\right).
	\end{equation}
	To prove \eqref{Upsiz}, we multiply the first equation of \eqref{phipsi} by $\psi_{z}$ to get
	\begin{equation}\label{eq46}
		\chi U \psi_z^2=\phi_t \psi_z-\left[ (U+\phi_{z})^p-U^p\right]_z \psi_z-\left(s+\chi V \right) \phi_z \psi_z-\chi \phi_z \psi_z^2.
	\end{equation}
	Noting that by the second equation of \eqref{phizpsiz}, it holds
	\begin{equation}\nonumber
		\begin{aligned}
			\phi_t \psi_z & =\left(\phi \psi_z\right)_t-\phi \psi_{z t}=\left(\phi \psi_z\right)_t-\phi\left(s \psi_{z z}+\phi_{z z}\right) \\
			& =\left(\phi \psi_z\right)_t-s\left(\phi \psi_z\right)_z+s \phi_z \psi_z-\left(\phi \phi_z\right)_z+\phi_z^2.
		\end{aligned}
	\end{equation}Integrating \eqref{eq46} over $[0,t]\times \mathbb{R}$,
	we obtain
		\begin{align}
			\chi\int_{0}^{t} \int U \psi_z^2&=\int \phi\psi_{z}-\int \phi_{0}\psi_{0z}+\int_{0}^{t}\int\phi_{z}^2-\int_{0}^{t}\int \left[ (U+\phi_{z})^p-U^p\right]_z \psi_z\nonumber\\&\quad-\chi\int_{0}^{t}\int V\phi_z \psi_z-\chi \int_{0}^{t}\int\phi_z \psi_z^2,\label{eq47} 		
		\end{align}
	where, in view of \eqref{|Uz|estimate} and the Taylor's expansion, it holds that
	\begin{equation}\nonumber
		\begin{aligned}
			&\quad-\int_{0}^{t}\int \left[ (U+\phi_{z})^p-U^p\right]_z \psi_z\\&=-p\int_{0}^{t}\int(U+\phi_{z})^{p-1}\phi_{zz}\psi_z-p\int_{0}^{t}\int \left[(U+\phi_{z})^{p-1}-U^{p-1}\right]U_z\psi_{z}\\&\leq C\left(\int_{0}^{t}\int U^{p-1}|\phi_{z z}||\psi_{z}|+\int_{0}^{t}\int U^{p-2}|U_z||\phi_{z}||\psi_{z}|\right)\\&\leq \frac{\chi}{4}\int_{0}^{t} \int U \psi_z^2+C\left(\int_{0}^{t}\int U^{p-3}\phi_{zz}^2+\int_{0}^{t}\int U^{p-2-\alpha}\phi_{z}^2 \right).
		\end{aligned}
	\end{equation}
	We thus have from \eqref{eq47} that
	\begin{equation}\nonumber
		\begin{aligned}
			\!\chi\!\int_{0}^{t} \int U \psi_z^2\!\leq\! C\int_{0}^{t}\int\left( U^{p-3}\phi_{zz}^2\!+\!U^{p-2-\alpha}\phi_{z}^2\right)\!+\!C\int\left( \frac{\phi^2}{U^{\alpha+1}}\!+\!\frac{\psi_{z}^2}{U}\!+\! \frac{\phi_0^2}{U^{\alpha+1}}\!+ \!\frac{\psi_{0z}^2}{U}\right),		
		\end{aligned}
	\end{equation}
	where we have used the fact $\left\|\psi_z\right\|_{L^{\infty}} \leq N(t)$ and $|V|\leq C$. This along with \eqref{L2 estimate} and \eqref{eq18} gives rise to \eqref{Upsiz}, provided that $N(t)$ is small enough.
	
	Now substituting \eqref{Upsiz} into \eqref{eq18} yields
		\begin{align}
			&\int\left(\frac{\phi_z^2}{U^2}+ \frac{\psi_z^2}{U} \right)+\int_{0}^{t}\int U^{p-3} \phi_{zz}^2+\int_{0}^{t}\int_{0}^{+\infty}\left(\frac{|U_z|}{U^2}-CN(t)U^{1-p}\right)\psi_z^2\nonumber
			\\&+\int_{0}^{t}\int_{-\infty}^{0}\left(U-CN(t)U^{1-p}\right)\psi_z^2
			+\int_{0}^{t}\int_{-\infty}^{0}\frac{|U_z|}{U^2}\psi_z^2
			+\int_{0}^{t}\int_{0}^{+\infty}U\psi_z^2\nonumber\\&\leq C\left(\int\frac{\phi_{0}^2}{U^{\alpha+1}}+\int\frac{\psi_{0}^2}{U^{\alpha}}
			+\int\frac{\phi_{0z}^2}{U^2}+\int\frac{\psi_{0z}^2}{U} \right).\label{eq48}
		\end{align}
	Recalling that $U$ is monotone decreasing, by \eqref{3.2}, we have for $z\in[0,+\infty)$,
	\[ \frac{|U_z|}{U^2}(z)=\frac{\chi}{sp}(u_--U)U^{-p}(z)\geq\frac{\chi}{sp}(u_--U(0))U^{-p}(0) \text{ and } U^{1-p}(z)\leq U^{1-p}(0).\]
	Thus, if $N(t)$ is small such that $CN(t)U^{1-p}(0)\leq\frac{\chi}{2s p}(u_--U(0))U^{-p}(0)$, then
	\[\frac{|U_z|}{U^2}-CN(t)U^{1-p}\geq\frac{|U_z|}{2U^2} \text{ for } z\in[0,+\infty).\]
	Similarly, since $U(z)>U(0)$ and $U^{1-p}(z)\leq u_-^{1-p}$ for $z\in(-\infty,0)$, if $N(t)\leq\frac{U(0)}{2Cu_-^{1-p}}$, then
	\[U-CN(t)U^{1-p}\geq\frac{U}{2} \text{ for }z\in(-\infty,0).\]
	It thus follows from \eqref{eq48} that, if $N(t)\ll1$, then
		\begin{align}
			&\int\left(\frac{\phi_z^2}{U^2}+ \frac{\psi_z^2}{U} \right)+\int_{0}^{t}\int U^{p-3} \phi_{zz}^2+\int_{0}^{t}\int\left(\frac{|U_z|}{U^2}\psi_z^2+U\psi_z^2\right)\nonumber\\&\leq C\left(\int\frac{\phi_{0}^2}{U^{\alpha+1}}+\int\frac{\psi_{0}^2}{U^{\alpha}}
			+\int\frac{\phi_{0z}^2}{U^2}+\int\frac{\psi_{0z}^2}{U} \right).\label{4.50}
		\end{align}
	The desired estimate \eqref{H1 estimate} follows from \eqref{4.50}.
\end{proof}

To close the \textit{a priori} estimate, we further need the estimate of the second order derivative of $(\phi,\psi)$.
\begin{lemma}\label{H2}
	Let the assumptions of Proposition \ref{proposition priori estimate} hold. Then it holds for any $t\in[0,T]$,
		\begin{align}
			&\|\phi_{zz}\|_{w_3}^2+\|\psi_{zz}\|_{w_4}^2+\int_0^t\left(\left\|\phi_{zzz}(\tau)\right\|_{w_6}^2+\left\|\psi_{zz}(\tau)\right\|^2 \right)\nonumber \\&\leq C\left(\|\phi_0\|_{w_1}^2+\|\psi_0\|_{w_2}^2+\|\phi_{0z}\|_{1,w_3}^2
			+\|\psi_{0z}\|_{1,w_4}^2\right).\label{H2 estimate}
		\end{align}

\end{lemma}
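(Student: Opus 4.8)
The plan is to run the weighted energy argument of Lemma \ref{H1} one order higher, this time on the equations governing $(\phi_{zz},\psi_{zz})$. Differentiating \eqref{phizpsiz} once more in $z$ produces a parabolic--hyperbolic system whose parabolic principal part is $p\big(U^{p-1}\phi_{zzz}\big)_z$, whose hyperbolic part is again $\psi_{zzt}=s\psi_{zzz}+\phi_{zzz}$, whose forcing is $F_{zzz}$, and whose remaining lower-order coefficients are smooth functions of $U,U_z,U_{zz}$, hence bounded by powers of $U$ through \eqref{3.2} and \eqref{|Uz|estimate}. Following Lemma \ref{H1}, I would fix $\epsilon>0$, set $U_\epsilon\triangleq U+\epsilon$, multiply the $\phi_{zz}$-equation by $\tfrac{\phi_{zz}}{UU_\epsilon}$ and the $\psi_{zz}$-equation by $\chi\tfrac{\psi_{zz}}{U_\epsilon}$, add them, and integrate over $\mathbb{R}$ after integrating by parts; the regularization kills every boundary term, and the genuinely singular estimate is retrieved at the end by letting $\epsilon\to0^{+}$ and invoking Fatou's Lemma, exactly as in the step leading to \eqref{eq18}. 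Just as in \eqref{eq14}, the diffusion term yields the parabolic dissipation $p\int\tfrac{U^{p-2}}{U_\epsilon}\phi_{zzz}^2$ (plus a favorable term $p(1-p)\int(\cdots)\phi_{zz}^2$), the two coupling terms $\chi\int\tfrac{\phi_{zz}\psi_{zzz}}{U_\epsilon}+\chi\int\tfrac{\psi_{zz}\phi_{zzz}}{U_\epsilon}$ collapse into $\chi\int\tfrac{U_z}{U_\epsilon^2}\phi_{zz}\psi_{zz}$, and $s\chi\int\tfrac{\psi_{zzz}\psi_{zz}}{U_\epsilon}$ gives the boundary-free positive term $\tfrac{s\chi}{2}\int\tfrac{|U_z|}{U_\epsilon^2}\psi_{zz}^2$.

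The surviving terms I would estimate by the Cauchy--Schwarz inequality together with the $L^{\infty}$ bound $\|\phi_z/U\|_{L^{\infty}}\leq CN(t)$ from \eqref{sobolev embedding} and the weighted Sobolev inequality \eqref{weighted sobolev inequ} applied to $\phi_{zz}$ and $\psi_{zz}$. The only nonroutine contribution is the forcing: since Lemma \ref{taylor} controls $F$ only through $F_{zz}$, the top-order term $\int\tfrac{F_{zzz}\,\phi_{zz}}{UU_\epsilon}$ has to be integrated by parts into $-\int F_{zz}\,\partial_z\!\big(\tfrac{\phi_{zz}}{UU_\epsilon}\big)$ and then bounded with \eqref{Fzz}; its worst piece, $U^{p-2}|\phi_z|\,|\phi_{zzz}|$ from $|F_{zz}|$ matched against $\tfrac{\phi_{zzz}}{UU_\epsilon}$, is absorbed into the parabolic dissipation after invoking $\|\phi_z/U\|_{L^{\infty}}\leq CN(t)$, while the cubic leftovers of the form $\tfrac{U^{p-3}}{U_\epsilon}\phi_{zz}^2\phi_{zzz}$ are split by Young's inequality into a small multiple of $\tfrac{U^{p-2}}{U_\epsilon}\phi_{zzz}^2$ and a term controlled by $\int U^{p-3}\phi_{zz}^2$, which is already bounded by Lemma \ref{H1}. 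After these estimates, integrating in time, absorbing, and passing $\epsilon\to0^{+}$ by Fatou's Lemma, one arrives at
\[
\int\!\Big(\tfrac{\phi_{zz}^2}{U^2}+\tfrac{\psi_{zz}^2}{U}\Big)+\int_0^t\!\int U^{p-3}\phi_{zzz}^2+\int_0^t\!\int\tfrac{|U_z|}{U^2}\psi_{zz}^2\ \leq\ C\mathcal{D}_0+CN(t)\int_0^t\!\int U^{1-p}\psi_{zz}^2,
\]
where $\mathcal{D}_0\triangleq\|\phi_0\|_{w_1}^2+\|\psi_0\|_{w_2}^2+\|\phi_{0z}\|_{1,w_3}^2+\|\psi_{0z}\|_{1,w_4}^2$, and where Lemmas \ref{L2}--\ref{H1} together with the elementary bounds $\tfrac{1}{U^{p}},\tfrac{1}{U^{p+1}}\leq CU^{p-3}$ (valid for $0<p<1$) have been used to swallow the lower-order leftovers.

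It remains to turn the weighted $\psi_{zz}$-dissipation into the full unweighted one and to dispose of the last term above. For this I would prove, in parallel, the analogue of \eqref{Upsiz}, namely $\int_0^t\!\int U\psi_{zz}^2\leq C\mathcal{D}_0+CN(t)\int_0^t\!\int U^{1-p}\psi_{zz}^2$, by multiplying the first equation of \eqref{phizpsiz} by $\psi_{zz}$, rewriting $\phi_{zt}\psi_{zz}=(\phi_z\psi_{zz})_t-s(\phi_z\psi_{zz})_z+s\phi_{zz}\psi_{zz}-(\phi_z\phi_{zz})_z+\phi_{zz}^2$ with the help of $\psi_{zzt}=s\psi_{zzz}+\phi_{zzz}$, integrating over $[0,t]\times\mathbb{R}$, and absorbing the $U\psi_{zz}^2$-terms; the boundary term $\int\phi_z\psi_{zz}$ is handled through $\int\phi_z\psi_{zz}\leq\eta\|\psi_{zz}\|_{w_4}^2+C_\eta\int U\phi_z^2$ with $\int U\phi_z^2\leq C\|\phi_z\|_{w_3}^2$, both already available, and the $\int\!\int U^{p-3}\phi_{zzz}^2$ appearing on the right is absorbed by the previous inequality. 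Adding a small multiple of this claim to that inequality and splitting $\mathbb{R}=(0,\infty)\cup(-\infty,0)$ as at the end of Lemma \ref{H1}---using $\tfrac{|U_z|}{U^2}\geq cU^{-p}\geq cu_-^{-p}$ and $U^{1-p}\leq u_-U^{-p}$ on $(0,\infty)$, and $U\geq U(0)>0$, $U^{1-p}\leq CU$ on $(-\infty,0)$---one absorbs the residual $CN(t)\int\!\int U^{1-p}\psi_{zz}^2$ once $N(t)$ is small and recovers $\int_0^t\!\int\psi_{zz}^2$, which gives \eqref{H2 estimate}. The step I expect to be the main obstacle is the estimate of $\int\tfrac{F_{zzz}\,\phi_{zz}}{UU_\epsilon}$ after integration by parts: differentiating the strongly singular factor $\tfrac{1}{UU_\epsilon}$ generates a long list of monomials in $U,U_z,\phi_z,\phi_{zz},\phi_{zzz}$---for instance the $\tfrac{U^{p-5}U_z^2}{U_\epsilon}|\phi_z|^3$- and $\tfrac{U^{p-4}|U_z|}{U_\epsilon}\phi_z^2|\phi_{zz}|$-type terms---and one must check that each of them is dominated, uniformly in $\epsilon$ and with a spare factor $N(t)$ or a small $\eta$, either by $p\int\tfrac{U^{p-2}}{U_\epsilon}\phi_{zzz}^2$ or by a quantity already controlled in Lemmas \ref{L2}--\ref{H1}; getting this exponent bookkeeping right, and keeping the constants uniform in $\epsilon$, is the delicate part.
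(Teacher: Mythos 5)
Your proposal follows essentially the same strategy as the paper: differentiate \eqref{phizpsiz} once more, test against $\frac{\phi_{zz}}{UU_\epsilon}$ and $\chi\frac{\psi_{zz}}{U_\epsilon}$, integrate $F_{zzz}$ by parts onto $F_{zz}$ and bound it via \eqref{Fzz}, pass $\epsilon\to 0^{+}$ by Fatou, derive the auxiliary bound on $\int_0^t\!\int U\psi_{zz}^2$ by multiplying the $\phi_z$-equation by $\psi_{zz}$, and close with the split at $z=0$ exactly as at the end of Lemma \ref{H1}. This matches the paper's proof both in strategy and in the key intermediate inequalities, with only cosmetic differences in how the extra terms from $(pU^{p-1}\phi_{zz})_{zz}$ are grouped (the paper collects them as $I_6$ rather than as a favorable quadratic in $\phi_{zz}$) and in the precise Cauchy--Schwarz split used on $\int\phi_z\psi_{zz}$.
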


\begin{proof}
	We differentiate \eqref{phizpsiz} with respect to $z$ to get
	\begin{equation}\label{phizzpsizz}
		\begin{cases}
			\phi_{zz t}=&\left(pU^{p-1}\phi_{zzz}\right)_z+\chi U \psi_{zzz}+\chi\left( 2U_z \psi_{zz}+U_{zz} \psi_{z}\right)+\left[\left( s+\chi V\right)\phi_z \right]_{z z}\\&+\chi\left(\phi_z \psi_z\right)_{zz}+\left(2p(U^{p-1})_z\phi_{zz}+ p(U^{p-1})_{zz}\phi_{z}\right)_z+F_{zzz}, \\
			\psi_{zzt}=&s \psi_{z zz}+\phi_{zzz}.
		\end{cases}
	\end{equation}	
	Multiplying the first equation of \eqref{phizzpsizz} by $\frac{\phi_{zz}}{UU_\epsilon}$ and the second one by $\chi\frac{\psi_{zz}}{U_\epsilon}$, we obtain
		\begin{align}
			&\frac{1}{2}\frac{\mathrm{d}}{\mathrm{d}t}\int\left(\frac{\phi_{zz}^2}{UU_\epsilon}+\chi \frac{\psi_{zz}^2}{U_\epsilon} \right)+p\int \frac{U^{p-2}}{U_\epsilon} \phi_{zzz}^2-\frac{s\chi}{2}\int\frac{U_{z}}{U_\epsilon^2}\psi_{zz}^2\nonumber\\&=p\int\left(\frac{U^{p-3}U_z }{U_\epsilon}+\frac{U^{p-2}{U}_z }{U_\epsilon^2}\right) \phi_{zz}\phi_{zzz}+\chi\int\frac{{U}_z}{U_\epsilon^2}\phi_{zz}\psi_{zz}\nonumber
			\\&\quad+\chi\int\left( 2U_z \psi_{zz}+U_{zz} \psi_{z}\right)\frac{\phi_{zz}}{UU_\epsilon}+\int \left[\left( s+\chi V\right)\phi_z \right]_{z z}\frac{\phi_{zz}}{UU_\epsilon}+\chi\int\frac{\left(\phi_{z}\psi_{z} \right)_{zz}\phi_{zz}}{UU_\epsilon}\nonumber
			\\&\quad+\int \left(2p(U^{p-1})_z\phi_{zz}+ p(U^{p-1})_{zz}\phi_{z}\right)_z\frac{\phi_{zz}}{UU_\epsilon}
			+\int F_{zzz} \frac{\phi_{zz}}{UU_\epsilon}\nonumber\\&\triangleq I_1+\cdots+I_7.\label{eq19}
		\end{align}
	By \eqref{|Uz|estimate} and the Cauchy-Schwarz inequality, we have
		\begin{align}
			I_1&\leq C\left( \int \frac{|\phi_{zz}||\phi_{z zz}|}{UU_\epsilon}+\int\frac{|\phi_{zz}||\phi_{z zz}|}{U_\epsilon^2}\right)\nonumber\\&\leq \frac{p}{4}\int	\frac{U^{p-2} }{U_\epsilon}\phi_{zzz}^2+C\left( \int\frac{U^{2-p} }{U^{2}U_\epsilon}\phi_{zz}^2+\int\frac{U^{2-p} }{U_\epsilon^3}\phi_{zz}^2\right)\nonumber\\&\leq \frac{p}{4}\int	\frac{U^{p-2} }{U_\epsilon}\phi_{zzz}^2+C\int\frac{ \phi_{zz}^2}{U^{p+1}},	\label{eq22}	\end{align}
	and
	\begin{equation}\label{4.54}
		I_2\leq\frac{s\chi}{8}\int\frac{|{U}_z|}{U_\epsilon^2}\psi_{zz}^2+C\int\frac{\phi_{zz}^2}{U^{p}}.
	\end{equation}
	Notice that \eqref{3.2} gives
		\begin{align}
			U_{zz}&=\frac{\chi}{sp}U^{2-p}U_z+\frac{(2-p)}{sp}U^{1-p}U_z(\chi U-s^2)\nonumber\\&=\frac{\chi}{sp}U^{2-p}U_z+(2-p)\frac{U_z^2}{U},\label{eq20}
		\end{align}
	which in combination with \eqref{|Uz|estimate} implies that
	\begin{equation}\label{Uzz}
		|U_{zz}|\leq C\left(U^{4-2p}+U^{3-2p} \right)\leq CU^{3-2p}.
	\end{equation}
	Owing to the second equation of \eqref{travleing wave equ}, it holds
	\begin{equation}\label{eq21}
		|V_{zz}|=\frac{|U_{zz}|}{s}\leq CU^{3-2p}\leq C.
	\end{equation}
	On the other hand, \eqref{eq20} also gives
	\begin{equation}\label{4.58}
		(U^{p-1})_{zz}=(p-1)\left[(p-2)U^{p-3}U_z^2+U^{p-2}U_{zz} \right]=\frac{\chi(p-1)}{sp}U_z.
	\end{equation}
	Now by \eqref{|Uz|estimate} and \eqref{Uzz}, we get
		\begin{align}
			I_3&\leq2\chi\int\frac{|U_z|}{UU_\epsilon}|\phi_{zz}||\psi_{zz}|+C\int U^{1-2p}|\phi_{zz}||\psi_{z}|\nonumber\\&\leq \frac{s\chi}{8}\int\frac{|U_z|}{U_\epsilon^2}\psi_{zz}^2+C\left(\int U^{p-3} \phi_{zz}^2+\int \psi_{z}^2\right).\label{4.59}
		\end{align}
	By \eqref{travleing wave equ}, \eqref{eq21} and the fact that $|s+\chi V|\leq C$, we have
		\begin{align}
			I_4&=\int \frac{s+\chi V}{UU_\epsilon}\phi_{zz}\phi_{zzz}+2\chi \int \frac{V_z}{UU_\epsilon}\phi_{zz}^2+\chi\int \frac{V_{zz}}{UU_\epsilon}\phi_{z}\phi_{zz}\nonumber\\&\leq\frac{p}{8}\int\frac{ U^{p-2}}{U_\epsilon} \phi_{z zz}^2+C\left(\int \frac{\phi_{zz}^2}{U^{p}U_\epsilon}+\int \frac{\phi_{zz}^2}{U^{p}}+\int U^{p-3}\phi_{zz}^2+ \int \frac{U^{3-p}}{U^2U_\epsilon^2}\phi_{z}^2\right)\nonumber\\&\leq\frac{p}{8}\int \frac{ U^{p-2}}{U_\epsilon}\phi_{z zz}^2+C\left(\int U^{p-3}  \phi_{zz}^2+\int\frac{\phi_{z}^2 }{U^{p+1}} \right),\label{4.60}
		\end{align}
where we have used $\frac{1}{ U^{p}}\leq \frac{C}{ U^{p+1}}\leq CU^{p-3}$ in the second inequality.
	Noting $\|\psi_{z}\|_{L^{\infty}}\leq CN(t)$ and $\|\phi_{z}/U\|_{L^{\infty}}\leq CN(t)$, thanks to \eqref{|Uz|estimate}, one obtains
		\begin{align} I_5&=-\chi\int\frac{\phi_{zz}\phi_{zzz}\psi_z}{UU_\epsilon}
			+\chi\int\frac{U_z}{U^2U_\epsilon}\phi_{zz}^2\psi_z+\chi\int\frac{U_{z}}{UU_\epsilon^2}\phi_{zz}^2\psi_z\nonumber\\
			&\quad-\chi\int\frac{\phi_z\psi_{zz}\phi_{zzz}}{UU_\epsilon}
			+\chi\int\frac{U_z}{U^2U_\epsilon}\phi_z\psi_{zz}\phi_{zz}
			+\chi\int\frac{{U}_z}{U_\epsilon^2U}\phi_z\psi_{zz}\phi_{zz}\nonumber\\&\leq CN(t)\int\left(\frac{\phi_{zz}^2 }{U^pU_\epsilon}+\frac{U^{2-p}}{U_\epsilon} \psi_{zz}^2+\frac{|{U_z|}}{U^2}\phi_{zz}^2
			+\frac{|{U}_z|}{U_\epsilon^2}\phi_{zz}^2+\frac{U^{p-2} }{U_\epsilon}\phi_{z zz}^2+\frac{|{U}_z|}{U_\epsilon^2}\psi_{zz}^2\right)
			\nonumber\\&\leq\! CN(t)\int \left(\frac{U^{p\!-\!2} }{U_\epsilon}\phi_{z zz}^2\!+\!\frac{|{U}_z|}{U_\epsilon^2}\psi_{zz}^2+U^{1-p}\psi_{zz}^2\right)\!+\! C\int\left(\frac{1}{U^{p+1}}\!+\!\frac{1}{U^{p}} \right) \phi_{zz}^2.\label{4.61}
		\end{align}
	The sixth term on the right hand side of \eqref{eq19} can be estimated as follows. After integration by parts, we get
		\begin{align}
			I_6&=-2p\int\frac{(U^{p-1})_z}{UU_\epsilon}\phi_{zz}\phi_{zzz}
			+2p\int\left[\frac{(U^{p-1})_zU_z}{U^2U_\epsilon}+\frac{(U^{p-1})_zU_{z}}{UU_\epsilon^2}\right]\phi_{zz}^2
			\nonumber\\&\quad-p\int\frac{(U^{p-1})_{zz}}{UU_\epsilon}\phi_{z}\phi_{zzz}
			+p\int\left[\frac{(U^{p-1})_{zz}U_z}{U^2U_\epsilon}+\frac{(U^{p-1})_{zz}U_{z}}{UU_\epsilon^2}\right]\phi_{z}\phi_{zz}\nonumber\\&\triangleq I_{61}+\cdots+I_{64}\nonumber,
		\end{align}
	where, by virtue of \eqref{|Uz|estimate} and the Cauchy-Schwarz inequality, it holds that
	\begin{equation}\nonumber
		I_{61}=-2p(p-1)\int\frac{U^{p-3}U_{z} }{U_\epsilon} \phi_{zz}\phi_{zzz}\leq\frac{p}{8}\int \frac{U^{p-2} }{U_\epsilon}\phi_{z zz}^2+C\int\frac{\phi_{zz}^2 }{U^{p+1}} ,
	\end{equation}
	and
	\begin{equation}\nonumber
		I_{62}=2p(p-1)\int\left(\frac{U^{p-4} U_z^2 }{U_\epsilon} +\frac{U^{p-3} U_z^2 }{U_\epsilon^2}\right) \phi_{zz}^2\leq C\int \frac{\phi_{zz}^2 }{U^{p+1}}.	 	
	\end{equation}
	Thanks to \eqref{4.58} and \eqref{|Uz|estimate}, we have
	\begin{equation}\nonumber
		I_{63}=-\frac{\chi(p-1)}{s}\int\frac{U_z}{UU_\epsilon}\phi_{z}\phi_{zzz} \leq\frac{p}{8}\int\frac{U^{p-2}}{U_\epsilon} \phi_{z zz}^2+C\int U^{3-3p}\phi_{z}^2,
	\end{equation}
	and
	\begin{equation}\nonumber
		\begin{aligned}
			I_{64}=\frac{\chi(p-1)}{s}\int\left(\frac{U_z^2}{U^2U_\epsilon} +\frac{U_z^2}{UU^2_\epsilon}\right) \phi_{z}\phi_{zz}\leq C\left(\int U^{p-3}\phi_{zz}^2+\int U^{5-5p}\phi_{z}^2\right).
		\end{aligned}
	\end{equation}
	Thus, noting $0<p<1$, we have
	\begin{equation}\label{4.62}
		I_6\leq \frac{p}{4}\int\frac{U^{p-2}}{U_\epsilon} \phi_{z zz}^2+C\int U^{p-3}\phi_{zz}^2+C\int \phi_{z}^2.
	\end{equation}
	We next estimate the last term on the right hand side of \eqref{eq19}. By \eqref{Fzz}, we get
	\begin{equation}\nonumber
		\begin{aligned}
			I_7&=-\int F_{z z}\left(\frac{\phi_{zzz}}{UU_\epsilon}-\frac{U_z\phi_{zz}}{U^2U_\epsilon}
			-\frac{{U}_z\phi_{zz}}{U_\epsilon^2U}\right)\\&\leq C\int\left[\frac{U^{p-3}}{U_\epsilon} \phi_{z z}^2 |\phi_{z z z}|+\left(\frac{U^{p-4}}{U_\epsilon} |U_z| +\frac{U^{p-3}}{U_\epsilon^2} |U_{z}|\right) |\phi_{z z}^3|\right]\\
			& \quad+C\int\left[\left(\frac{U^{p-5}}{U_\epsilon} U_z^2+\frac{|U_z|}{U^2U_\epsilon}\right)\phi_z^2 |\phi_{z z z}|+\left (\frac{U^{p-6}|U_z^3|}{U_\epsilon} +\frac{ U_z^2}{U^3U_\epsilon}\right) \phi_z^2 |\phi_{z z}|\right.\\
			& \left. \quad+\left(\frac{U^{p-5}|U_z^3|}{U_\epsilon^2} +\frac{ U_z^2}{U^2U_\epsilon^2}\right) \phi_z^2 |\phi_{z z}| +\left(\frac{U^{p-5} U_z^2 }{U_\epsilon}+\frac{U^{p-4} |U_z|^2 }{U_\epsilon^2} \right)  |\phi_z| \phi_{z z}^2\right.\\
			& \left. \quad+\left(\frac{U^{p-4}|U_z|}{U_\epsilon}+\frac{U^{p-3}|U_{z}|}{U_\epsilon^2} \right)   |\phi_z| |\phi_{z z}|| \phi_{z z z}|+\frac{U^{p-3} }{U_\epsilon} |\phi_z| \phi_{z z z}^2\right]\\&
			\triangleq I_{71}+I_{72}.
		\end{aligned}
	\end{equation}
	By \eqref{|Uz|estimate}, $\|\phi_{zz}\|_{w_3}\leq N(t)$ and H\"{o}lder's inequality, we derive
	\begin{equation}\nonumber
		\begin{aligned}
			I_{71}&\leq\|U^{\frac{p-2}{2}}U_\epsilon^{-\frac{1}{2}}\phi_{z z}\|_{L^{\infty}} \|\frac{\phi_{zz} }{U}\|_{L^{2}}\| U^{\frac{p-2 }{2}}U_\epsilon^{-\frac{1}{2}}\phi_{zzz}\|_{L^{2}}\\
			&\!\quad+\|U^{\frac{p\!-\!2}{2}}U_\epsilon^{\!-\!\frac{1}{2}}\phi_{z z}\|_{L^{\infty}} \left( \| U^{\frac{\!\!-2p\!+\!1}{2}} U_\epsilon^{\!-\!\frac{1}{2}}\phi_{zz}\|_{L^{2}}\!+\! \| U^{\frac{3\!-\!2p }{2}} U_\epsilon^{\!-\!\frac{3}{2}}\phi_{zz}\|_{L^{2}} \right)  \| U^{\frac{p\!-\!3 }{2}}\phi_{zz}\|_{L^{2}}\\&\!\leq\!\! CN(t)\left(\|U^{\frac{p\!-\!2}{2}}U_\epsilon^{\!-\!\frac{1}{2}}\phi_{z z}\|_{L^{\infty}} \!\| U^{\frac{p\!-\!2 }{2}}U_\epsilon^{\!-\!\frac{1}{2}}\phi_{zzz}\|_{L^{2}}
			\!+\!\|U^{\frac{p\!-\!2}{2}}U_\epsilon^{\!-\!\frac{1}{2}}\phi_{z z}\|_{L^{\infty}} \!\| U^{\frac{p\!-\!3 }{2}}\phi_{zz}\!\|_{L^{2}} \!\right)\\&\leq \!CN(t)\left( \|U^{\frac{p\!-\!2}{2}}U_\epsilon^{\!-\!\frac{1}{2}}\phi_{z z}\|_{L^{2}}^\frac{1}{2}\|U^{\frac{p\!-\!2 }{2}}U_\epsilon^{\!-\!\frac{1}{2}}\phi_{z zz}\|_{L^{2}}^\frac{3}{2}\!+\!\|U^{\frac{\!-\!3}{2}}\phi_{z z}\|_{L^{2}}^\frac{3}{2}\|U^{\frac{p\!-\!2 }{2}}U_\epsilon^{\!-\!\frac{1}{2}}\phi_{z zz}\|_{L^{2}}^\frac{1}{2}\right)\\&\leq N(t)p\int \frac{U^{p-2} }{U_\epsilon}\phi_{zz z}^{2}+C\int U^{p-3}\phi_{zz}^{2},
		\end{aligned}
	\end{equation}
	where we have used in the second inequality that $\frac{1}{U^{2p}}\leq \frac{C}{U^2}$ due to $0<p<1$. And thanks to \eqref{|Uz|estimate} and $\|\phi_{z}/U\|_{L^{\infty}}\leq CN(t)$, it holds that
	\begin{equation*}
		\begin{aligned}
			I_{72}&\leq CN(t)\int\left(\frac{|\phi_{z}||\phi_{z z z}|}{U^{p}U_\epsilon}+ \frac{|\phi_{z}||\phi_{z z}|}{U^{2p}}+\frac{|\phi_{zz}||\phi_{z z z}|}{UU_\epsilon}+\frac{\phi_{z z}^2}{U^{p+1}}+p\frac{ U^{p-2}}{U_\epsilon}\phi_{z zz}^2\right)\\&\leq\! N(t)p\int\frac{U^{p\!-\!2} }{U_\epsilon} \phi_{z zz}^2\!+\!C\left[\int\left(\frac{ U^{2\!-\!3p}}{U_\epsilon}\!+\!\frac{1}{U^{2p}}\right) \phi_{z}^2\!+\!\int\left(\frac{1}{U^{2p} }\!+\!\frac{U^{2\!-\!p}}{U^2U_\epsilon}+\frac{1}{U^{p+1}}\right) \phi_{zz}^2 \right]\\&\leq 	 N(t)p\int \frac{ U^{p-2}}{U_\epsilon}\phi_{z zz}^2+C\left(\int \frac{\phi_{z}^2}{U^{p+1}} +\int \frac{\phi_{zz}^2}{U^{p+1}} \right),	
		\end{aligned}
	\end{equation*}
	where we have used the fact $U^{1-3p}\leq \frac{C}{U^{2p}}\leq \frac{C}{U^{p+1}}$ in the last inequality. Thus,
	\begin{equation}\label{eq27}
		I_{7}\leq 	CN(t)p\int \frac{ U^{p-2}}{U_\epsilon}\phi_{z zz}^2+C\int U^{p-3}\phi_{zz}^2+C\int \frac{\phi_{z}^2}{U^{p+1}}.
	\end{equation}
	Now substituting \eqref{eq22}-\eqref{eq27} into \eqref{eq19}, and integrating the inequality with respect to $t$, by Lemma \ref{H1}, we obtain
		\begin{align}
			&\quad\int\left(\frac{\phi_{zz}^2}{UU_\epsilon}+\frac{\psi_z^2}{U_\epsilon} \right)+\int_{0}^{t}\left(\int \frac{U^{p-2} }{U_\epsilon} \phi_{zzz}^2+\int\frac{|U_{z}|}{U_\epsilon^2}\psi_{zz}^2 \right)\nonumber\\&\leq\!\int\left(\frac{\phi_{0zz}^2}{UU_\epsilon}\!+\!\frac{\psi_{0zz}^2}{U_\epsilon} \right)\!+\!C\int_{0}^{t} \int \left(U^{p-3}\phi_{zz}^2+ U^{p\!-\!2\!-\!\alpha} \phi_{z}^2\!+\!\psi_{z}^2 \right)\!+\! CN(t)\int_{0}^{t} \int U^{1\!-\!p}\psi_{zz}^2\nonumber\\&\leq C\int\left(\frac{\phi_{0zz}^2}{U^2}+\frac{\psi_{0zz}^2}{U}+\frac{\phi_{0z}^2}{U^2}+\frac{\psi_{0z}^2}{U}+\frac{\phi_{0}^2}{U^{\alpha+1} }+\frac{\psi_{0}^2}{U^{\alpha} } \right)+CN(t)\int_{0}^{t} \int U^{1-p}\psi_{zz}^2,\label{eq28} \end{align}	
	provided that $N(t)$ is suitably small. Letting $\varepsilon\rightarrow0^{+}$, by Fatou's Lemma, it then follows that
		\begin{align}
			&\quad\int\left(\frac{\phi_{zz}^2}{U^2}+\frac{\psi_z^2}{U} \right)+\int_{0}^{t}\left(\int U^{p-3} \phi_{zzz}^2+\int\frac{|U_{ z}|}{U^2}\psi_{zz}^2 \right)\nonumber\\&\leq C\int\left(\frac{\phi_{0zz}^2}{U^2}+\frac{\psi_{0zz}^2}{U}+\frac{\phi_{0z}^2}{U^2}+\frac{\psi_{0z}^2}{U}+\frac{\phi_{0}^2}{U^{\alpha+1} }+\frac{\psi_{0}^2}{U^{\alpha} } \right)+CN(t)\int_{0}^{t} \int U^{1-p}\psi_{zz}^2.\label{eq52}
		\end{align}

	Next we estimate $\int_{0}^{t} \int U^{1-p}\psi_{zz}^2$. Multiplying the first equation of \eqref{phizpsiz} by $\psi_{zz}$, we get
	\begin{equation}\nonumber
		\begin{aligned}
			\chi U \psi_{z z}^2=&\phi_{z t} \psi_{z z}-\left[(U+\phi_{z})^p-U^p\right]_{zz} \psi_{z z}-\left(s +\chi V \right) \phi_{z z}\psi_{z z}\\&-\chi V_z \phi_z\psi_{z z}-\chi U_z \psi_z\psi_{z z}-\chi\left(\phi_z \psi_z\right)_z\psi_{z z}.		
		\end{aligned}
	\end{equation}
	Noting that
	\begin{equation}\nonumber
		\begin{aligned}
			\phi_{z t} \psi_{z z} & =\left(\phi_z \psi_{z z}\right)_t-\phi_z \psi_{z z t}=\left(\phi_z \psi_{z z}\right)_t-\phi_z\left(s \psi_{z z z}+\phi_{z z z}\right) \\
			& =\left(\phi_z \psi_{z z}\right)_t-s\left(\phi_z \psi_{z z}\right)_z+s \phi_{z z} \psi_{z z}-\left(\phi_z \phi_{z z}\right)_z+\phi_{z z}^2,
		\end{aligned}
	\end{equation}
	we have
		\begin{align}
			\chi \int_{0}^{t}\int U \psi_{z z}^2=&\int\phi_z \psi_{zz}-\int\phi_{0z} \psi_{0z z}+\int_{0}^{t}\int\phi_{z z}^2-\int_{0}^{t}\int\left[(U+\phi_{z})^p-U^p\right]_{zz} \psi_{z z}\nonumber\\&-\chi\int_{0}^{t}\int (V \phi_{z z}+V_z \phi_z+U_z \psi_z)\psi_{z z}-\chi\int_{0}^{t}\int\left(\phi_z \psi_z\right)_z\psi_{z z},\label{eq49}
		\end{align}
	where,
		\begin{align}
			&-\int_{0}^{t}\int\left[(U+\phi_{z})^p-U^p\right]_{zz} \psi_{z z}\nonumber\\=&  -\int_{0}^{t} \int \left[p(p-1)\left(U+\phi_z\right)^{p-2}\left(U_z+\phi_{z z}\right)^2+p\left(U+\phi_z\right)^{p-1}\left(U_{z z}+\phi_{z z z}\right)\right.\nonumber\\&\quad\left.-p(p-1) U^{p-2} U_z^2-p U^{p-1} U_{z z}\right] \psi_{z z} \nonumber\\
			=&  \!-\!p(p\!-\!1)\int_{0}^{t} \int \left(U\!+\!\phi_z\right)^{p\!-\!2} \phi_{z z}^2 \psi_{z z}\!\!p(p\!-\!1)\int_{0}^{t} \int \left[\left(U\!\!\phi_z\right)^{p\!-\!2}\!-\!U^{p\!-\!2}\right] U_z^2 \psi_{z z}\nonumber\\&\!-\!2p(p\!-\!1)\int_{0}^{t} \int  \left(U+\phi_z\right)^{p-2} U_z \phi_{z z} \psi_{z z}-p\int_{0}^{t} \int\left[ \left(U+\phi_z\right)^{p-1}-U^{p-1}\right] U_{z z} \psi_{z z} \nonumber\\&-p\int_{0}^{t} \int \left(U+\phi_z\right)^{p-1} \phi_{z z z} \psi_{z z}.\label{eq50}
		\end{align}
	By Taylor's expansion, $\|\psi_{zz}\|_{w_4}\leq N(t)$ and H\"{o}lder's inequality, we deduce that the first term on the right hand side of \eqref{eq50} satisfies
		\begin{align}
			\left|\int_{0}^{t} \int \left(U+\phi_z\right)^{p-2} \phi_{z z}^2 \psi_{z z}\right|&\leq C\int_{0}^{t}\int U^{{p-2}}\phi_{z z}^2 |\psi_{zz}| 	\nonumber\\&\leq C\int_{0}^{t}\|U^{\frac{p-3}{2}}\phi_{z z}\|_{L^{\infty}} \|U^{\frac{p}{2}}\phi_{z z}\|_{L^{2}}\|U^{-\frac{1}{2}}\psi_{zz}\|_{L^{2}}\nonumber\\&\leq CN(t)\int_{0}^{t}\|U^{\frac{p-3}{2}}\phi_{zzz}\|_{L^{2}}^{\frac{1}{2}} \|U^{\frac{p-3}{2}}\phi_{z z}\|_{L^{2}}^{\frac{3}{2}}\nonumber\\&\leq C\left(\int_{0}^{t}\int U^{p-3}\phi_{zzz}^2+\int_{0}^{t}\int U^{p-3}\phi_{zz}^2 \right).\label{4.68}
		\end{align}
	Utilizing \eqref{|Uz|estimate}, \eqref{Uzz} and the Cauchy-Schwarz inequality, we have
		\begin{align}
			&\quad\text{the other terms of \eqref{eq50}}\nonumber\\&\leq C\left(\int_{0}^{t}\int U^{1-p} |\phi_{z}||\psi_{zz}|+\int_{0}^{t}\int  |\phi_{zz}||\psi_{zz}|+\int_{0}^{t}\int U^{p-1} |\phi_{zzz}||\psi_{zz}| \right)\nonumber	\\&\leq \frac{3\chi}{8}	\int_{0}^{t}\int U \psi_{zz}^2+C\left(\int_{0}^{t}\int U^{2p-3}\phi_{zzz}^2+\int_{0}^{t}\int \frac{\phi_{zz}^2}{U} +\int_{0}^{t}\int U^{1-2p}\phi_{z}^2 \right).\label{eq4.69}
		\end{align}
	Moreover, since $|V|\leq C$ and $V_z=-\frac{U_z}{s}$, by \eqref{|Uz|estimate}, we get
		\begin{align}
			&\left|\chi\int_{0}^{t}\int (V \phi_{z z}+V_z \phi_z+U_z \psi_z)\psi_{z z}\right|\nonumber\\ \leq& \frac{3\chi}{8}	\int_{0}^{t}\int U \psi_{zz}^2 +C\left(\int_{0}^{t}\int U^{p-3}\phi_{zz}^2+\int_{0}^{t}\int U^{p-\alpha-2}\phi_{z}^2+\int_{0}^{t}\int\psi_{z}^2 \right).\label{4.70}
		\end{align}
	The last term on the right hand side of \eqref{eq49} can be estimated as
		\begin{align}
			-\chi\int_{0}^{t}\int\left(\phi_z \psi_z\right)_z\psi_{z z}&=-\chi \int_{0}^{t}\int \phi_{zz} \psi_{z}\psi_{zz}-\chi \int_{0}^{t}\int \phi_{z} \psi_{zz}^2\nonumber\\&\leq  CN(t)\chi \int_{0}^{t}\int U \psi_{zz}^2+ C\int_{0}^{t}\int U^{p-3}\phi_{zz}^2,	\label{eq51}
		\end{align}
where we have used $\|\phi_{z}/U\|_{L^{\infty}}\leq CN(t)$ and $\|\psi_{z}\|_{L^{\infty}}\leq CN(t)$.
	Because $1/U\leq CU^{2p-3}\leq C U^{p-3}$ and $U^{1-2p}\leq CU^{p-\alpha-2}$, adding \eqref{eq50}-\eqref{eq51} with \eqref{eq49} and utilizing \eqref{H1 estimate} and \eqref{eq52}, we then arrive at
		\begin{align}
			\chi \int_{0}^{t}\int U \psi_{z z}^2&\leq C\int_{0}^{t}\int \left(U^{p-3}\phi_{zzz}^2+U^{p-3}\phi_{zz}^2+ U^{p-\alpha-2}\phi_{z}^2+\psi_{z}^2 \right)\nonumber\\&\quad+C\int \left( \frac{\phi_{0z}^2}{U^2}+ \frac{\psi_{0zz}^2}{U}+\frac{\phi_{z}^2}{U^2}+\frac{\psi_{zz}^2}{U}\right)\nonumber\\&\leq 	 C\int\left(\frac{\phi_{0zz}^2}{U^2}+\frac{\psi_{0zz}^2}{U}+\frac{\phi_{0z}^2}{U^2}+\frac{\psi_{0z}^2}{U}+\frac{\phi_{0}^2}{U^{\alpha+1} }+\frac{\psi_{0}^2}{U^{\alpha} } \right)\nonumber\\&\quad+CN(t)\int_{0}^{t} \int U^{1-p}\psi_{zz}^2.\label{eq53}	
		\end{align}
Combining \eqref{eq52} and \eqref{eq53} and following the same procedure as in \eqref{eq48}, we obtain \eqref{H2 estimate} provided that $N(t)$ is small enough. Thus, the proof of Lemma \ref{H2} is finished.
\end{proof}

\begin{proof}[Proof of Proposition \ref{proposition priori estimate}] The desired \emph{a priori} estimate \eqref{priori estimate} follows from Lemmas \ref{L2}-\ref{H2}.
\end{proof}

\subsection{Proof of main results}

\begin{proof}[Proof of Theorem \ref{transformed stability}] The \emph{a priori} estimate \eqref{priori estimate} guarantees that $N(t)$ is small for all $t>0$ if $N(0)$ is small enough. Hence applying the standard extension procedure, we obtain the global well-posedness of the system  \eqref{phipsi}-\eqref{phi0psi0} in $X(0,\infty)$. Thanks to the decomposition \eqref{decompose}, the system \eqref{transformed model}-\eqref{transinitial data} has a unique global solution $(u,v)(x,t)$ satisfying \eqref{2.10}. It remains to show the convergence \eqref{2.12}. In view of \eqref{decompose} again, it suffices to show
	\begin{equation}\label{asymptotic}
		\sup _{z \in \mathbb{R}}|(\phi_{z}, \psi_z)(z, t)| \rightarrow 0, \quad \text { as } t \rightarrow+\infty,
	\end{equation}	
	where $z=x-st$. We denote $p(t)\triangleq\|(\phi_z,\psi_z)(\cdot,t)\|^2$. By the estimate \eqref{priori estimate}, we have
	$$
	\int_{0}^{\infty}\|(\phi_z,\psi_z)(\cdot,t)\|^2 \mathrm{d}t
	\leq \int_{0}^{\infty}\left(\left\|\phi_z(\cdot,t)\right\|_{w_5}^2
	+\left\|\psi_z(\cdot,t)\right\|^2\right)\mathrm{d}t\leq C N^2(0) \leq C,
	$$	
	which implies $p(t)\in L^1(0,\infty)$. We next show that $p'(t)\in L^1(0,\infty)$. By the equation  \eqref{phizpsiz}, and using $\|\psi_{z}\|_{L^{\infty}}, \|\phi_{z}\|_{L^{\infty}}\leq CN(t)$, we have
		\begin{align}\int_{0}^{\infty}\int\phi_{zt}^2&\leq C\int_{0}^{\infty}\int(U^{2p-2}\phi_{zzz}^2+U^{2p-4}U_z^2\phi_{zz}^2
			+U^{2p-6}U_z^4\phi_{z}^2+U^{2p-4}U_{zz}^4\phi_{z}^2)
			\nonumber\\&\quad+C\int_{0}^{\infty}\int(\phi_{zz}^2+\phi_{z}^2+\psi_{zz}^2+\psi_{z}^2
			+F_{zz}^2)\nonumber\\&\leq C\nonumber,\end{align}	
	and
	\[\int_{0}^{\infty}\int\psi_{zt}^2\leq C\int_{0}^{\infty}\int(\psi_{zz}^2+\phi_{zz}^2)\leq C.\]
	Thus,
	\begin{equation}\nonumber
		\begin{aligned}
			\int_{0}^{\infty } |p'(\tau)|\mathrm{d}\tau
			&=2 \int_{0}^{\infty } \left|\int\left(\phi_z\phi_{zt}+\psi_z\psi_{zt}
			\right)\right|\mathrm{d}\tau\\& \leq C\int_{0}^{\infty }\int \left( \phi_{z}^2+\phi_{z t}^2+\psi_{z}^2+\psi_{z t}^2\right)\mathrm{d}\tau\\&\leq C.
		\end{aligned}
	\end{equation}
	It then follows that
	\begin{equation}\nonumber
		\|(\phi_z,\psi_z)(\cdot,t)\| \rightarrow 0, \quad \text {as } t \rightarrow +\infty.
	\end{equation}
	Now for all $z\in \mathbb{R}$, we get
	\begin{equation}\nonumber
		\begin{aligned}
			\phi_z^2(z, t)  \!=\!2 \int_{\!-\!\infty}^z \phi_z \phi_{z z}(y, t) d y\!\leq \!2\|\phi_z(\cdot,t)\!\|\!\|\phi_{z z}(\cdot,t)\|\!\leq \!C\left\|\phi_z(\cdot, t)\right\| \rightarrow 0, \quad \text {as } t \rightarrow+\infty,
		\end{aligned}
	\end{equation}
	where we have used the boundedness of $\|\phi_{z z}\|$. Hence
	\[	\sup _{z \in \mathbb{R}}|\phi_{z}(z, t)| \rightarrow 0, \quad \text {as } t \rightarrow+\infty.
	\]
	Applying the same procedure to $\psi_{z}$ leads to
	\begin{equation}\nonumber
		\sup _{z \in \mathbb{R}}|\psi_{z}(z,t)|\rightarrow0, \quad \text {as } t \rightarrow+\infty.	
	\end{equation}
	Therefore \eqref{asymptotic} is proved and the proof of Theorem \ref{transformed stability} is complete.
\end{proof}

\begin{proof}[Proof of Theorem \ref{original TW stability}] Owing to \eqref{decompose} and the Hopf-Cole transformation \eqref{hopf-cole}, we have
	\[(\ln w_0(x)-\ln W(x+x_0))_x=-(v_0(x)-V(x+x_0))=-\psi_{0x}(x),\]
	which gives
	\[\ln w_0(x)-\ln W(x+x_0)=-\psi_{0}(x).\]
	Then it is easy to see that the assumption of Theorem \ref{original TW stability} verifies that of Theorem \ref{transformed stability}. Hence according to Theorem \ref{transformed stability},  the transformed problem \eqref{transformed model}-\eqref{transinitial data} admits a unique global solution $(u,v)(x,t)$ satisfying the regularity \eqref{2.10} and the asymptotic convergence \eqref{2.12}.
	
	We next show the desired results of $w$. By the second equation of \eqref{orginal model}, one can solve $w$ as
	\begin{equation*}
		w(x,t)=w_0(x)e^{-\int_0^tu(x,\tau)d\tau},
	\end{equation*}
	which, in combination with  the fact that $u(\infty,t)=0$, leads to
	\begin{equation}\label{eq54}
		w(\infty,t)=w_0(\infty)=w_+,\quad \forall \ t>0.	
	\end{equation}
	Now by \eqref{decompose} and the transformation \eqref{hopf-cole} again, one deduces that
	\begin{equation}\nonumber
		(\ln w(x,t)-\ln W(x-st+x_0))_x=-(v(x,t)-V(x-st+x_0))=-\psi_x(x,t).\end{equation}
	Hence
	$$w(x,t)=W(x-st+x_0)e^{-\psi(x,t)}.$$
	By Taylor's expansion, we have
		\begin{align}
			w(x,t)-W(x-st+x_0)&=W(x-st+x_0)(e^{-\psi(x,t)}-1)\nonumber\\&
			=W(x-st+x_0)\psi(x,t)\sum_{n=1}^{\infty}\frac{(-1)^{n}}{n!}(\psi)^{n-1}(x,t).\label{4.69}
	\end{align}
By Proposition \ref{phipsi stability}, one can see that $\|\psi\|_{C([0,\infty); H^2)}$ is small. Thus, we conclude that the series $\sum_{n=1}^{\infty}\frac{(-1)^{n}}{n!}(\psi)^{n-1}$ is absolutely convergent in $C([0,\infty); H^2)$. This together with the regularities of $\psi$ and $v$ implies
	\[w-W\in C([0,\infty);H^2),\ w_x/w-W_x/W \in C([0, \infty) ; H_{w_4}^1).\]
	It remains to show the asymptotic convergence of $w$. Thanks to \eqref{eq54}-\eqref{4.69} and the convergence of $\psi$, we have
	\begin{eqnarray*}
		\begin{aligned}
			\sup\limits_{x\in\mathbb{R}}|w(x,t)-W(x-st+x_0)|
			&=\sup\limits_{x\in\mathbb{R}}W(x-st+x_0)|e^{-\psi(x,t)}-1|\\&\leq w_+\sup\limits_{x\in\mathbb{R}}|e^{-\psi(x,t)}-1|\\&\to 0,  \ \ \mathrm{as} \ \ t \to \infty.
		\end{aligned}
	\end{eqnarray*}
	The proof is complete.
\end{proof}

\section{Numerical simulations}\label{numerical}
In this section, we are going to present some numerical simulations in three cases for the equations
\begin{equation}\label{numer1}
	\left\{\begin{aligned}
		u_t&=(u^p)_{x x}+(u v)_x, \\
		v_t&=u_x,
	\end{aligned}\right.
\end{equation}
with $p=0.1,0.5$ and $0.9$,  and the following initial data
\[
(u, v)(x, 0)=(u_0, v_0)(x) =\left\{\begin{aligned}
	&\Big(\frac{1}{2} e^{-x}, -\frac{1}{2} e^{-x}\Big), & \text { for } x \ge 0, \\
	&\Big(1-\frac{1}{2} e^{x}\cos x,-1+\frac{1}{2} e^{x}\cos x \Big), & \text { for } x <0.
\end{aligned}\right.
\]

\begin{figure}[H]
	\begin{minipage}[h]{0.48\linewidth}
		\centering
		\includegraphics[width=2.46in]{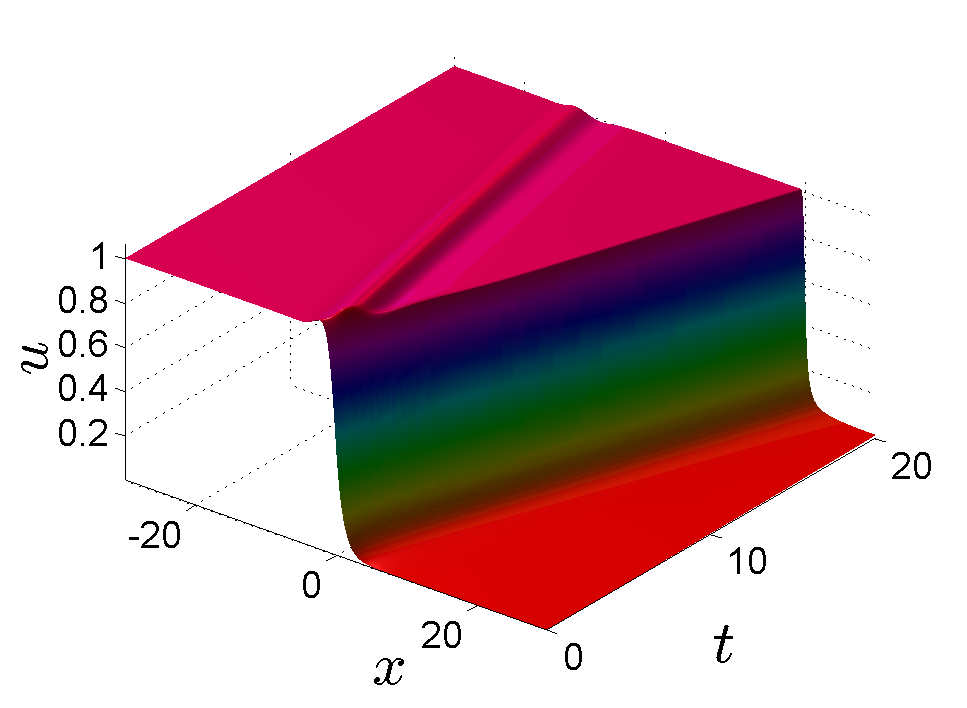}
	\end{minipage}%
	\begin{minipage}[h]{0.48\linewidth}
		\centering
		\includegraphics[width=2.46in]{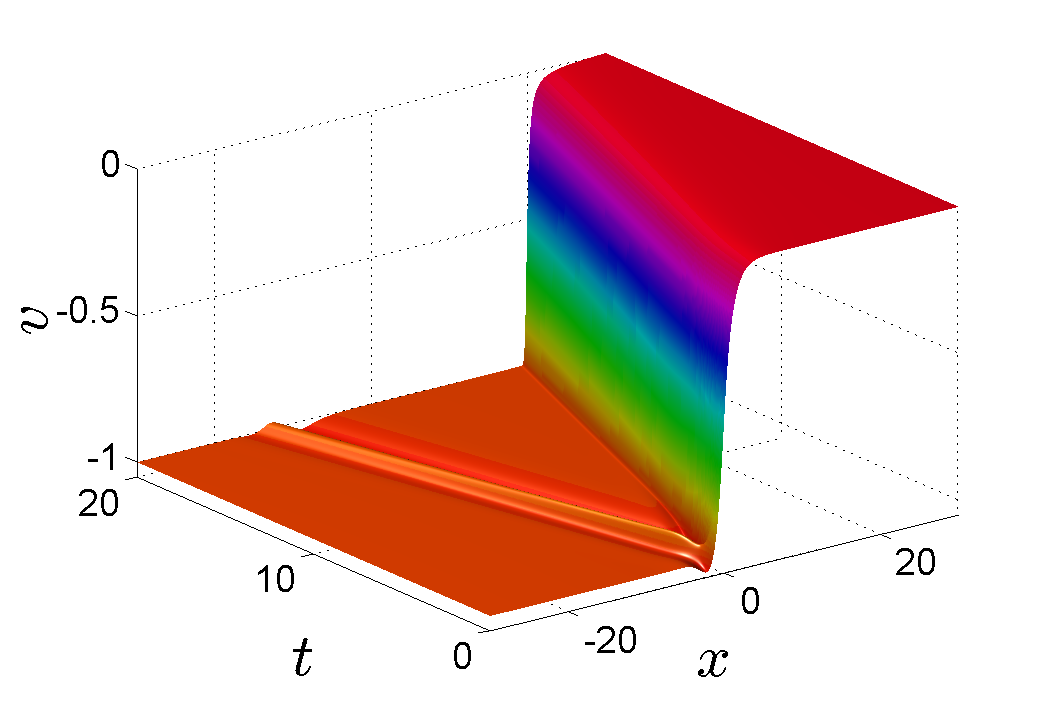}
	\end{minipage}%
	\caption{3D numerical solutions $u(x,t)$ (left) and $v(x, t)$ (right) of Problem \eqref{numer1} with $p = 0.1$ }\label{fig3d1}
\end{figure}

\begin{figure}
	\begin{minipage}[h]{0.48\linewidth}
		\centering
		\includegraphics[width=2.46in]{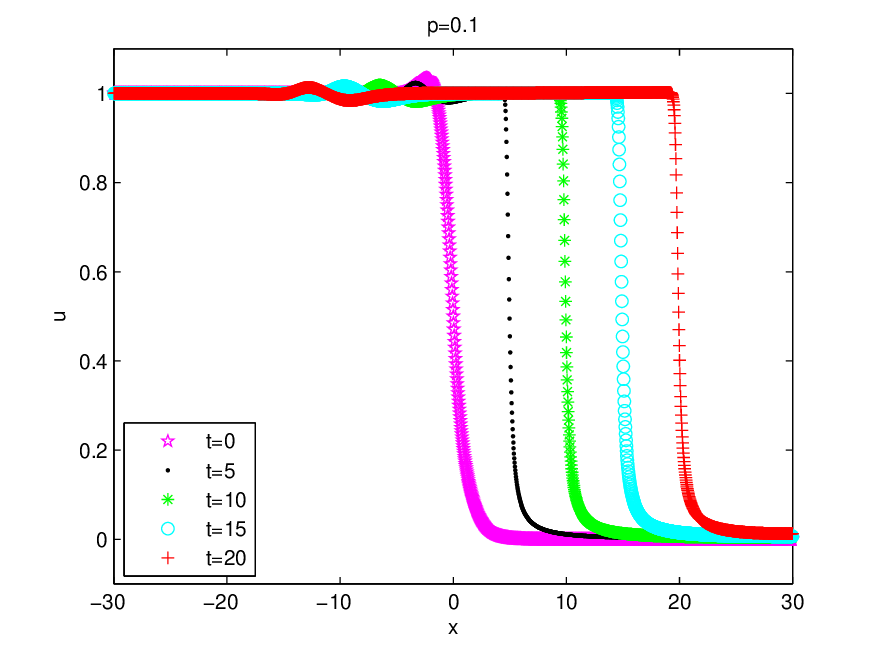}
	\end{minipage}%
	\begin{minipage}[h]{0.48\linewidth}
		\centering
		\includegraphics[width=2.46in]{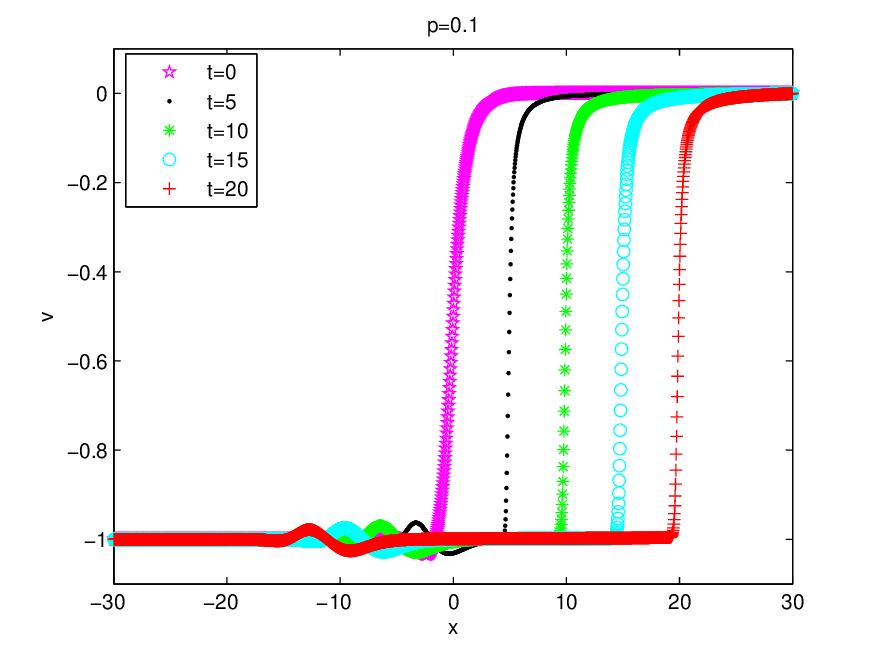}
	\end{minipage}%
	\caption{2D numerical solutions $u(x,t)$ (left) and $v(x, t)$ (right) of Problem \eqref{numer1} with $p = 0.1$ }
\end{figure}

\begin{figure}
	\begin{minipage}[h]{0.48\linewidth}
		\centering
		\includegraphics[width=2.46in]{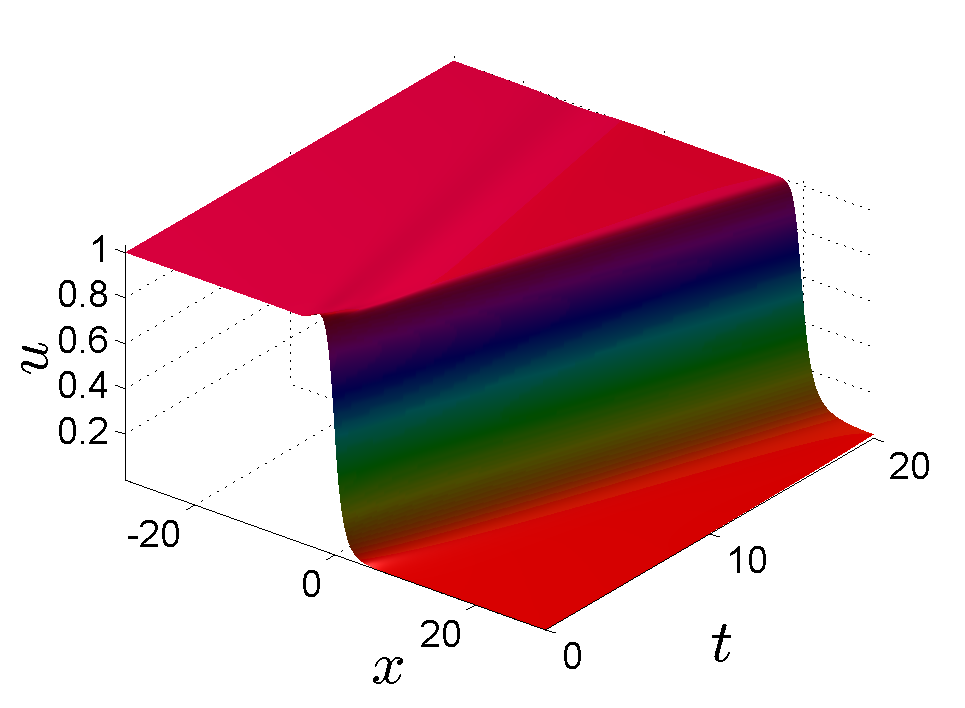}
	\end{minipage}%
	\begin{minipage}[h]{0.48\linewidth}
		\centering
		\includegraphics[width=2.46in]{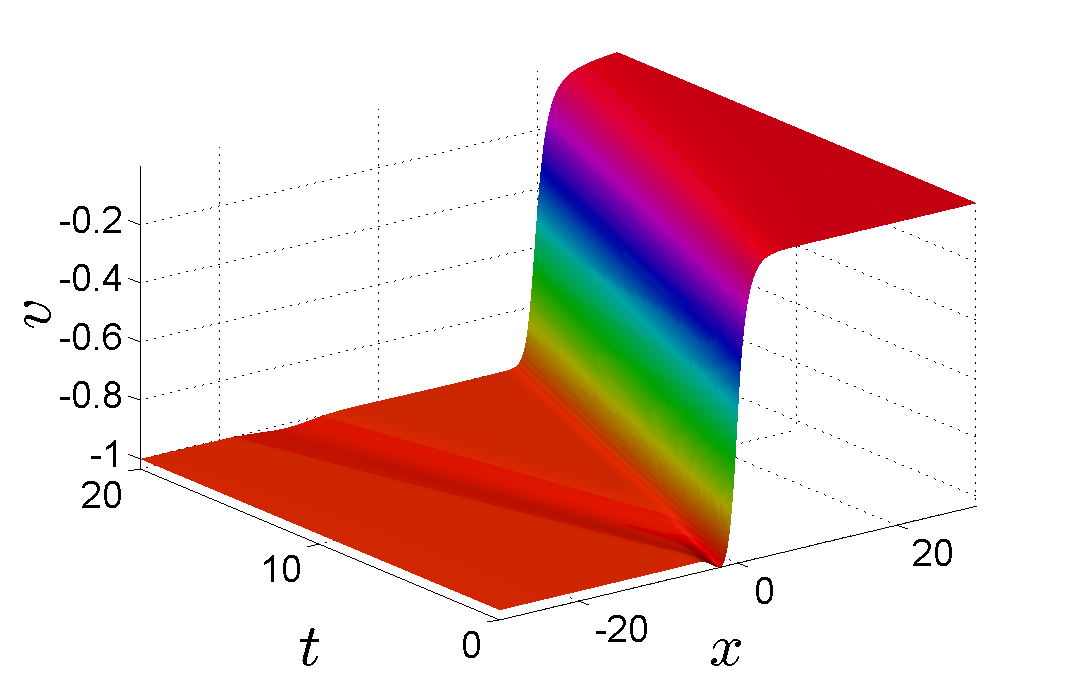}
	\end{minipage}%
	\caption{3D numerical solutions $u(x,t)$ (left) and $v(x, t)$ (right) of Problem \eqref{numer1} with $p = 0.5$ }
\end{figure}

\begin{figure}
	\begin{minipage}[h]{0.48\linewidth}
		\centering
		\includegraphics[width=2.46in]{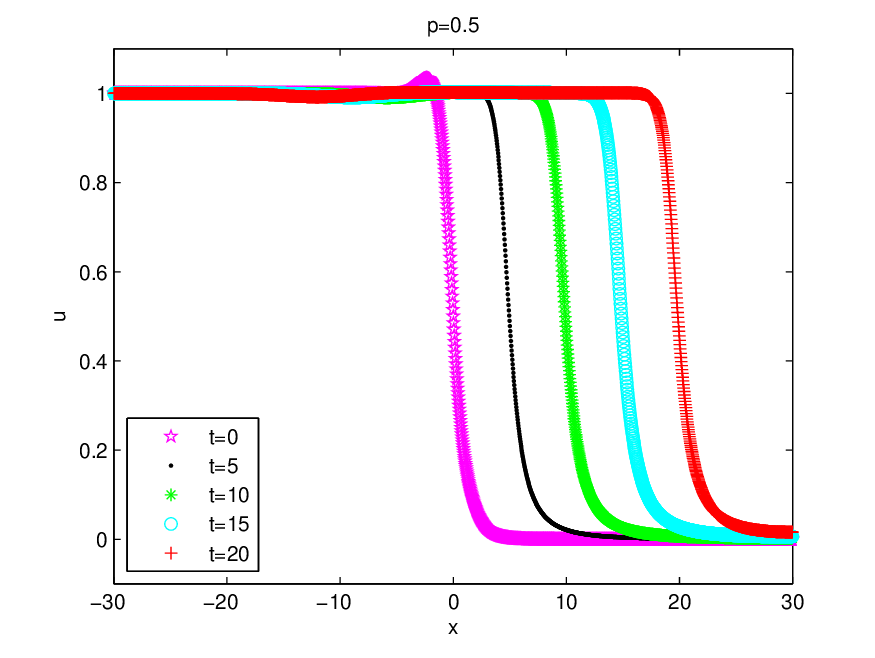}
	\end{minipage}%
	\begin{minipage}[h]{0.48\linewidth}
		\centering
		\includegraphics[width=2.46in]{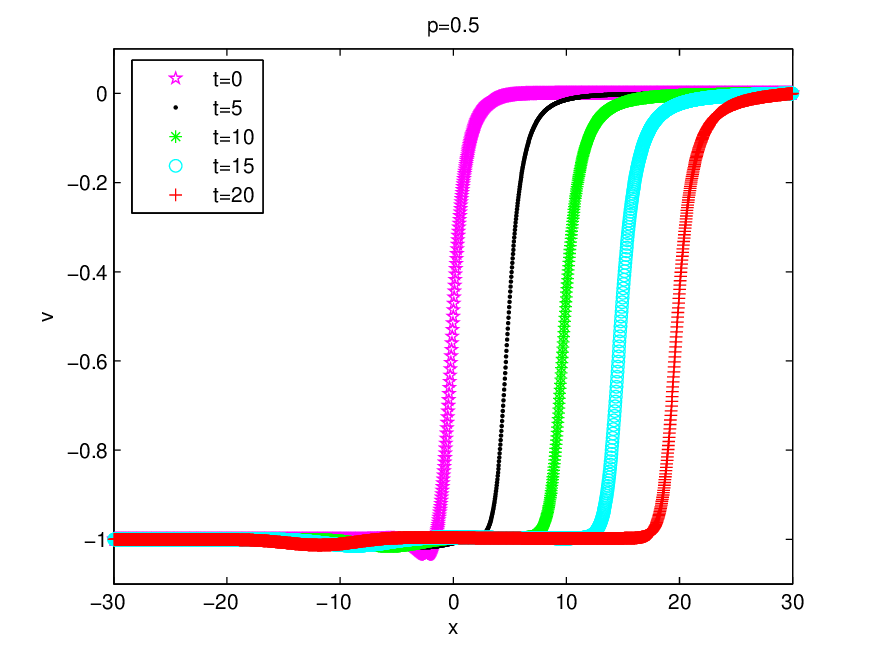}
	\end{minipage}%
	\caption{2D numerical solutions $u(x,t)$ (left) and $v(x, t)$ (right) of Problem \eqref{numer1} with $p = 0.5$ }
\end{figure}

\begin{figure}
	\begin{minipage}[h]{0.48\linewidth}
		\centering
		\includegraphics[width=2.46in]{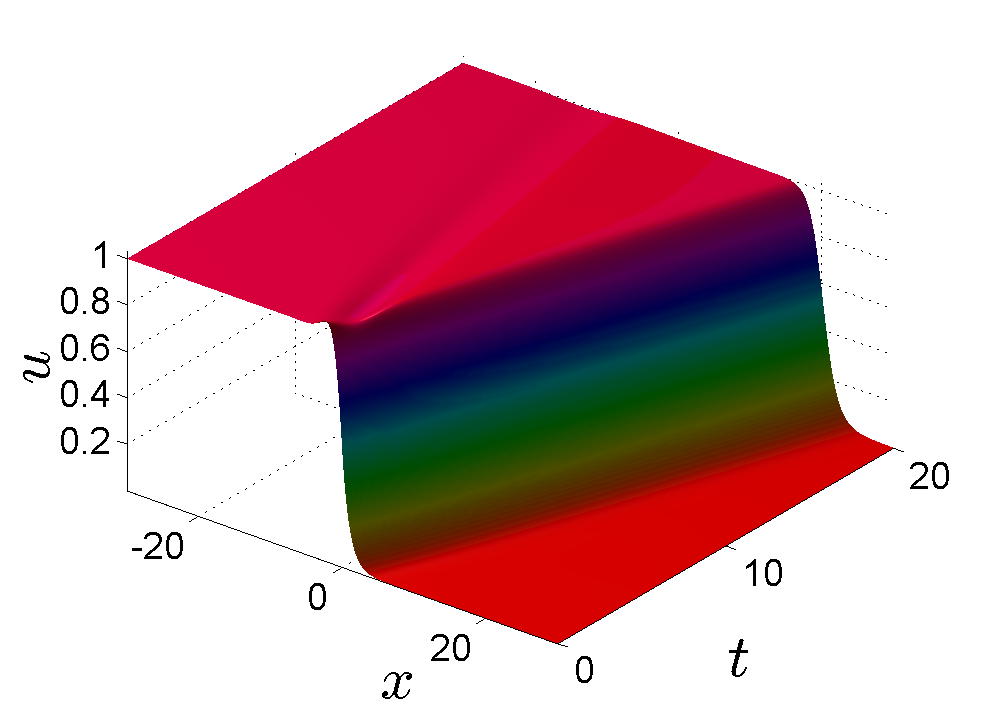}
	\end{minipage}%
	\begin{minipage}[h]{0.48\linewidth}
		\centering
		\includegraphics[width=2.46in]{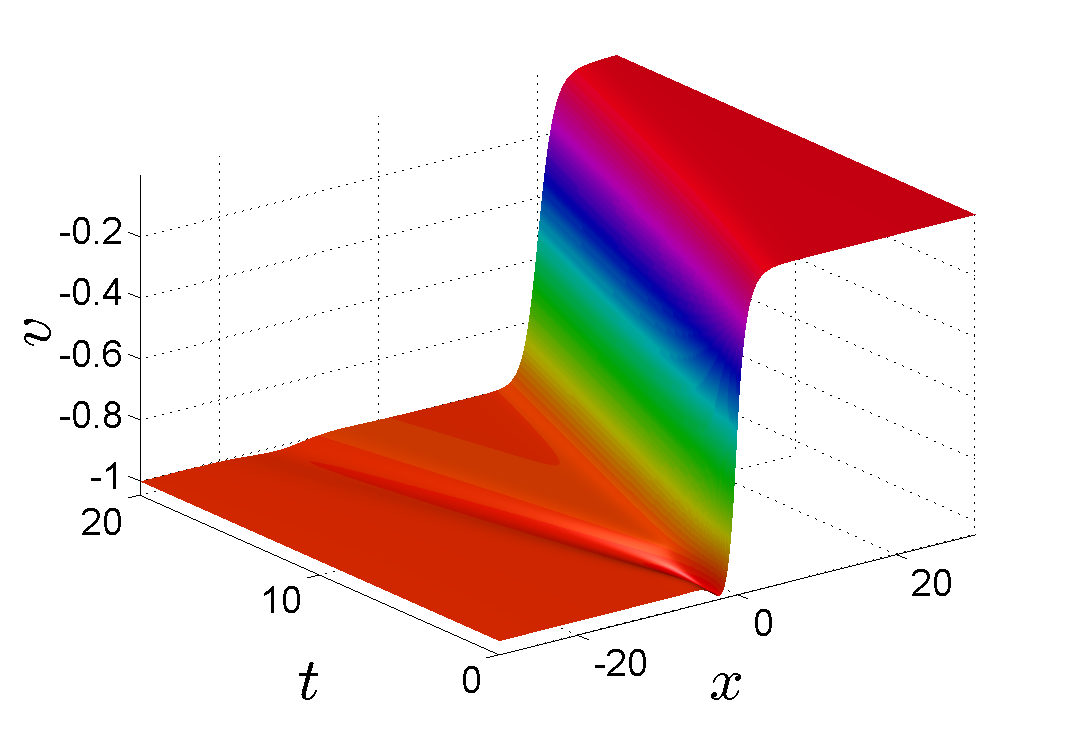}
	\end{minipage}%
	\caption{3D numerical solutions $u(x,t)$ (left) and $v(x, t)$ (right) of Problem \eqref{numer1} with $p = 0.9$ }
\end{figure}

\begin{figure}
	\begin{minipage}[h]{0.48\linewidth}
		\centering
		\includegraphics[width=2.46in]{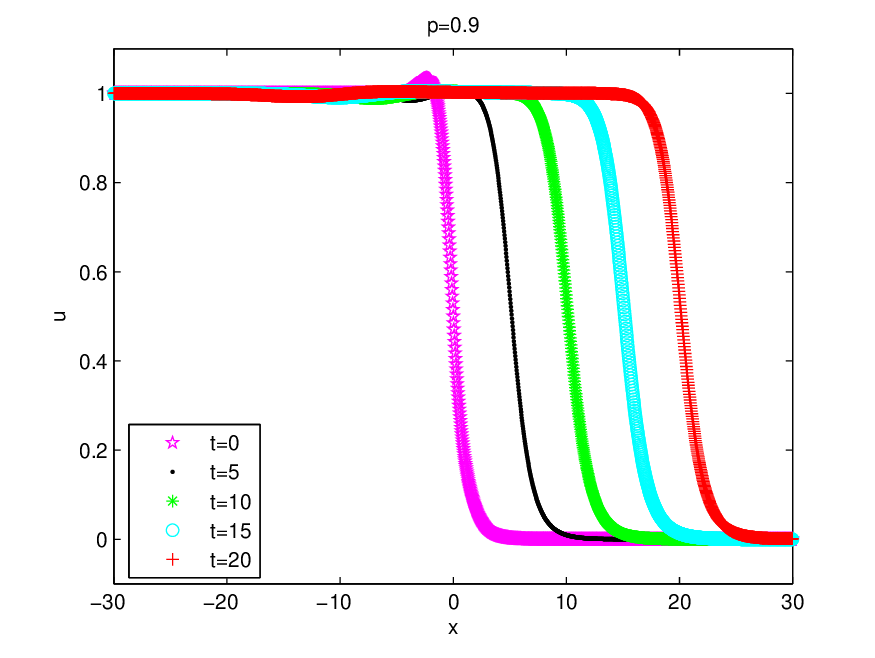}
	\end{minipage}%
	\begin{minipage}[h]{0.48\linewidth}
		\centering
		\includegraphics[width=2.46in]{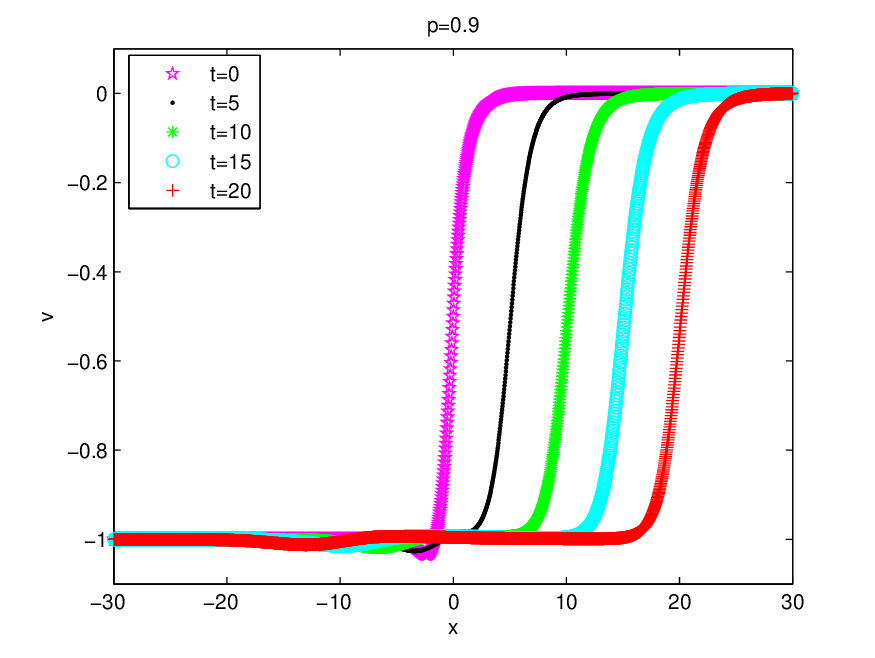}
	\end{minipage}%
	\caption{2D numerical solutions $u(x,t)$ (left) and $v(x, t)$ (right) of Problem \eqref{numer1} with $p = 0.9$ }\label{fig2d3}
\end{figure}

\begin{figure}
	\begin{minipage}[h]{0.48\linewidth}
		\centering
		\includegraphics[width=2.46in]{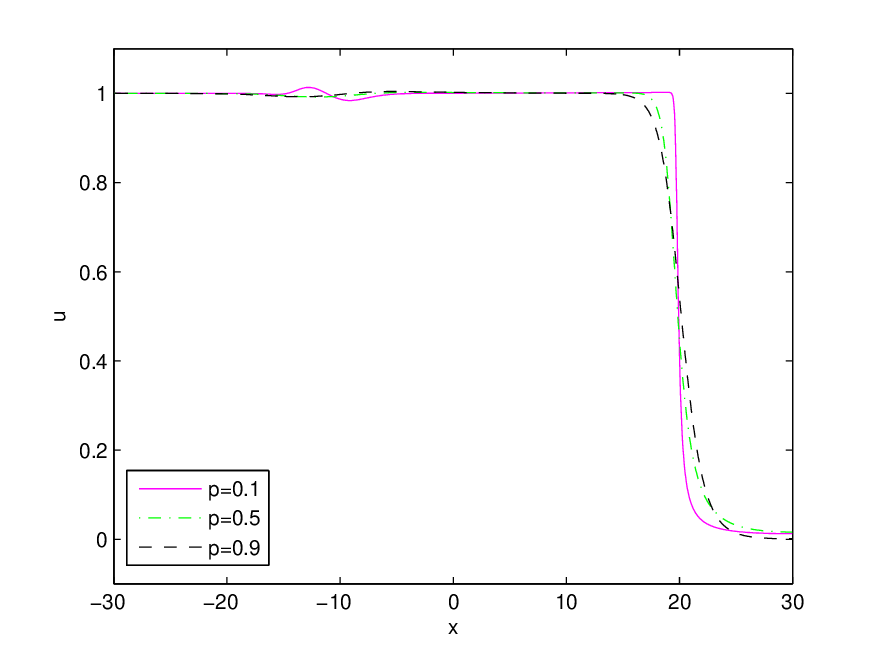}
	\end{minipage}%
	\begin{minipage}[h]{0.48\linewidth}
		\centering
		\includegraphics[width=2.46in]{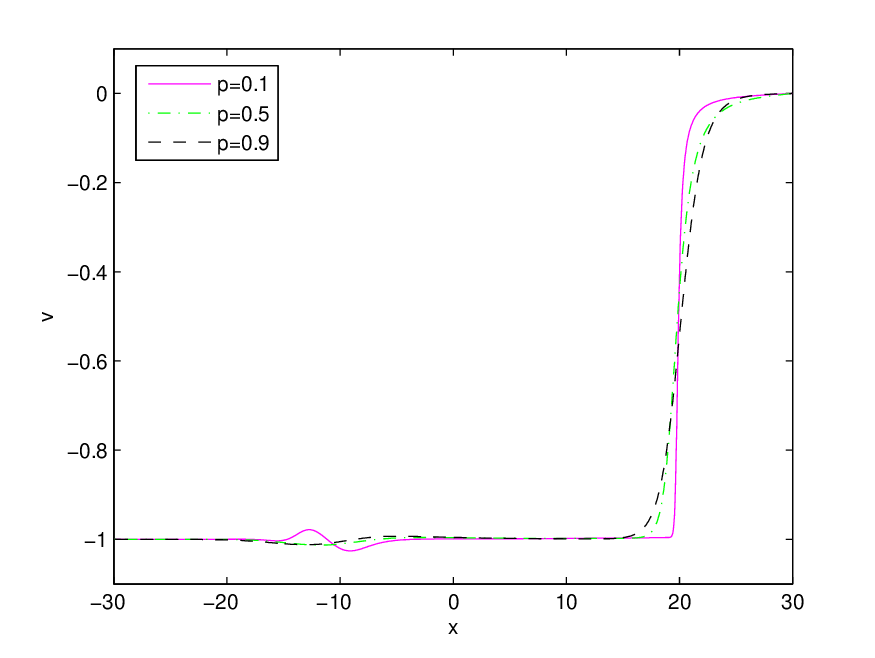}
	\end{minipage}%
	\caption{Numerical solutions $u(x, t)$ and $v(x,t)$ of Problem (5.1) with different $p$ at $t = 20$ }\label{fig2d33}
\end{figure}

To numerically solve the problem conveniently,  we set the computational domain $(x,t)\in(-30,30)\times[0,20]$ and consider problem \eqref{numer1} with homogeneous Neumann boundary conditions.
Then, the spatial discretization is done by the second-order central finite different methods with stepsize $h=0.05$. And the time discretization is done by the Crank-Nicolson method with the stepsize $\tau=0.05$.
The fully-discrete schemes give a system of nonlinear equations, which is solved by Newton iterative method with the iterative residual $\epsilon=10^{-10}$. The numerical results with different parameters $p$ are shown in Figures \ref{fig3d1}-\ref{fig2d3}, respectively. It can be seen from the figures  that the numerical solutions of $u$ and $v$ behave like two traveling waves. The larger the parameter $p$ is, the more obvious dissipative effect can be found. All the numerical results indicate that the traveling wave solutions are stable as time increases. They confirm the theoretical results in this paper. In particular, from Figure \ref{fig2d33}, we see that the stronger fast diffusion (the smaller value of $p$) makes the traveling waves much steeper like shock waves, namely, the effect of fast diffusion with the singular state $u_+=0$ to the structure of traveling waves is essential.

\section{Acknowledgements}
The research of D. Li is supported by the National Natural Science Foundation of China (Nos. 11771162, 12231003). The research of J. Li is supported by the Natural Science Foundation of Jilin Province (No. 20210101144JC) and the National Natural Science Foundation of China (No. 12371216). The research of M. Mei is supported in part by NSERC grant RGPIN 2022-03374.

\end{document}